\DeclarePairedDelimiter\floor{\lfloor}{\rfloor}
\newcommand{\set}[1]{\left\{ #1 \right\}}
\newcommand{\R}{{\mathbb R}}
\newcommand{\C}{{\mathbb C}}
\newcommand{\CQ}{{\mathcal{Q} }}
\newcommand{\CJk}{{\mathcal{J}_k}}
\newcommand{\CJn}{{\mathcal{J}_n}}
\newcommand{\CJ}{{\mathcal{J}}}
\newcommand{\GC}{G_{\mathbb{C}}}
\numberwithin{equation}{section}
\newtheorem{Theorem}{Theorem}[section]
\newtheorem{Corollary}[Theorem]{Corollary}
\newtheorem{Lemma}[Theorem]{Lemma}
\newtheorem{Proposition}[Theorem]{Proposition}
 { \theoremstyle{definition}
\newtheorem{Definition}[Theorem]{Definition}
\newtheorem{Remark}[Theorem]{Remark} }
\begin{document}

\allowdisplaybreaks

\newcommand{\arXivNumber}{1911.11842}

\renewcommand{\thefootnote}{}

\renewcommand{\PaperNumber}{041}

\FirstPageHeading

\ShortArticleName{Generalized B-Opers}

\ArticleName{Generalized B-Opers\footnote{This paper is a~contribution to the Special Issue on Integrability, Geometry, Moduli in honor of Motohico Mulase for his 65th birthday. The full collection is available at \href{https://www.emis.de/journals/SIGMA/Mulase.html}{https://www.emis.de/journals/SIGMA/Mulase.html}}}

\Author{Indranil BISWAS~$^\dag$, Laura P.~SCHAPOSNIK~$^\ddag$ and Mengxue YANG~$^\ddag$}

\AuthorNameForHeading{I.~Biswas, L.P.~Schaposnik and M.~Yang}

\Address{$^\dag$~Tata Institute of Fundamental Research, India}
\EmailD{\href{mailto:indranil@math.tifr.res.in}{indranil@math.tifr.res.in}}

\Address{$^\ddag$~University of Illinois at Chicago, USA}
\EmailD{\href{mailto:schapos@uic.edu}{schapos@uic.edu}, \href{mailto:myang59@uic.edu}{myang59@uic.edu}}

\ArticleDates{Received November 28, 2019, in final form May 02, 2020; Published online May 14, 2020}

\Abstract{Opers were introduced by Beilinson--Drinfeld [arXiv:math.AG/0501398]. In [\textit{J.~Math. Pures Appl.} \textbf{82} (2003), 1--42] a higher rank analog was considered, where the successive quotients of the oper filtration are allowed to have higher rank. We dedicate this paper to introducing and studying generalized $B$-opers (where ``$B$'' stands for ``bilinear''), obtained by endowing the underlying vector bundle with a~bilinear form which is compatible with both the filtration and the connection. In particular, we study the structure of these $B$-opers, by considering the relationship of these structures with jet bundles and with geometric structures on a~Riemann surface.}

\Keywords{opers; connection; projective structure; Higgs bundles; differential operator; Lagrangians}

\Classification{14H60; 31A35; 33C80; 53C07}

\rightline{\it In celebration of Motohico Mulase's 65th birthday.}

\renewcommand{\thefootnote}{\arabic{footnote}}
\setcounter{footnote}{0}

\section{Introduction}

The study of opers within geometry and mathematical physics has received much attention in the
last years, and in particular in connection with Higgs bundles in the recent years. In fact, certain opers
arise naturally as limits of Higgs bundles in the Hitchin components. Recall that the Higgs bundles,
corresponding to a complex Lie group~$\GC$, on a compact Riemann surface $\Sigma$ are given by the solutions
of Hitchin's equations:
\begin{gather}
F_A+ [\Phi, \Phi^*] = 0,\label{equation1}\\
\overline{\partial}_{A}\Phi = 0,
\label{equation2}
\end{gather}
where $F_A$ is the curvature of a unitary connection
$\nabla_A = \partial_{A}+\overline{\partial}_{A}$ associated to the Dolbeault operator
$\overline{\partial}_{A}$ on a principal $\GC$-bundle $P$ on $\Sigma$ and $\Phi$ is a $(1,0)$-form on
$\Sigma$ with values in the adjoint bundle $\operatorname{ad}(P)$~\cite{N1}. Given any solution $(A, \Phi)$ of~\eqref{equation1} and~\eqref{equation2}, there is a 1-parameter family of flat connections
\begin{gather}
\nabla_\xi = \xi^{-1}\Phi+\overline{\partial}_A+ \partial_A+\xi\Phi^{^*},\label{Laura1}
\end{gather}
parametrized by $\xi \in \C^\times = \C\setminus\{0\}$. Then, given a solution of
\eqref{equation1}--\eqref{equation2} in the ${\rm SL}(n, \C)$-Hitchin section, we can add a real parameter
$R > 0$ to~\eqref{Laura1} to obtain a natural family of connections with ${\rm SL}(n,\R)$ monodromy
\begin{gather*}
\nabla(\xi,R) := \xi^{-1}R\Phi+\overline{\partial}_A+ \partial_A+\xi R\Phi^{^*} .
\end{gather*}

In \cite{Gaiotto} Gaiotto conjectured that the space of opers should be obtained as the
$\hbar$-conformal limit of the Hitchin section: taking the limits $R \rightarrow 0$ and
$\xi\rightarrow 0$ simultaneously while holding the ratio $\hbar= \xi/R$ fixed. The
conjecture was recently established for general simple Lie groups by Dumitrescu, Fredrickson,
Kydonakis, Mazzeo, Mulase and Neitzke in~\cite{motohico1}, who also conjectured that this oper
is the {\it quantum curve} in the sense of Dumitrescu and Mulase~\cite{motohico2}, a
quantization of the spectral curve $S$ of the corresponding Higgs bundle by {\it topological
recursion}~\cite{motohico3}~-- see also references and details in \cite{Olivia}. Moreover,
very recently Collier and Wentworth showed in~\cite{collier2019conformal} that the above
conformal limit exists for spaces other than the Hitchin components.

\subsection*{Opers}

With views towards understanding other conformal limits of Higgs bundles and their appearance
through quantum curves, in this note we introduce {\it generalized $B$-opers} and begin a
program to study their geometry and topology, leaving for future work the extension of the
above results to this new setting. Opers were introduced by Beilinson--Drinfeld
\cite{BD, Opers1}; they were motivated by Drinfeld--Sokolov \cite{Opers2b, Opers2}.
Given a semisimple complex Lie group $G$, a $G$-oper on a compact Riemann surface
$\Sigma$ is
\begin{itemize}\itemsep=0pt
\item a holomorphic principal $G$-bundle $P$ on $\Sigma$ with a holomorphic connection $\nabla$,
and
\item a holomorphic reduction of structure group of $P$ to a Borel subgroup of $G$,
\end{itemize}
such that reduction satisfies the Griffiths transversality with respect to the connection
with the corresponding second fundamental forms being isomorphisms.
They admit reformulation in terms of holomorphic differential operators on a
Riemann surface. For example, ${\rm SL}(n, {\mathbb C})$-opers on $\Sigma$
are precisely holomorphic differential operators on $\Sigma$ of order $n$ and symbol $1$
with vanishing sub-principal term.

Opers also provide a coordinate-free description of the connections in the base space of a
generalized KdV hierarchy\footnote{We may think of a generalized KdV hierarchy to be an
infinite family of commuting flows on the space of connections on a principal $G$-bundle on a
noncompact curve.}. These connections can be translated to the usual $n$'th order differential
operators according to Drinfeld and Sokolov \cite{Opers2b, Opers2}. The study of opers has been
carried out from many different perspectives: as an example, Ben-Zvi and Frenkel studied how
opers arise from homogeneous spaces for loop groups and found a natural morphism between the
moduli spaces of spectral data and opers~\cite{MR1896178}. Moreover, Frenkel and Gaitsgory
studied opers on the formal punctured disc in~\cite{MR2263193} and showed that there is a
natural forgetful map from such opers to local systems on the punctured disc, which appears to
be of much importance for the geometric Langlands correspondence. Opers are also related to
the representations of affine Kac--Moody algebras at the critical level. In particular, from
\cite{MR2146349}, the algebra of functions on the space of $^LG$-opers, where $^LG$ is the
Langlands dual of $G$, on the formal disc is isomorphic to the classical $\mathcal{W}$-algebra associated to the Langlands dual Lie algebra $^L\mathfrak{g}$.

In recent years, new types of opers were introduced, such as $\mathfrak{g}$-opers\footnote{A~$\mathfrak{g}$-oper is a $G$-oper where $G$ is the group of automorphisms of $\mathfrak{g}$~\cite{Opers1}.} and Miura opers \cite{MR2146349}. We dedicate this paper to introducing and studying {\it generalized $B$-opers} (Definition \ref{def1}; where~``$B$'' stands for ``bilinear'' and not
``Borel'') which we show are closely related to geometric structures on the base Riemann surface
$\Sigma$ (Theorem~\ref{thm2}), and which encode certain classical $G$-opers. These are a
generalization of~\cite{Bi3}, a higher rank analog of the work in Beilinson--Drinfeld
\cite{BD, Opers1}, where the successive quotients of the oper filtration are allowed to have higher
rank. In our new setting, when the bilinear form is skew-symmetric and the rank of the bundle is~4,
it allows us to recover some of the~${\rm Sp}(4,\C)$-structure studied in~\cite{BA}.

The relationship between opers and geometric structures has been of much interests in the last half century, with the first results on this going back to work of R.C.~Gunning in \cite{Gu2}, where it was shown that there is a one-to-one correspondence between ${\rm SL}(n,\C)$-opers on a compact Riemann surface $\Sigma$ and complex projective structures on $\Sigma$. One should note that the term oper did not exist at the time, and hence Gunning calls his family of flat bundles associated to the complex projective structures {\it indigenous bundles} on the Riemann surface. Much work has been done sine then on geometric structures arising through opers, and the reader may want to refer to~\cite{sanders18} and references therein for further details.

In \cite{Bi3}, a more general class of opers was studied, where $\operatorname{rank}(E) = nr$ and
the rank of each successive quotient $E_i/E_{i-1}$ is $r$ (the above two conditions remain
unchanged). In the present paper, we incorporate a non-degenerate bilinear form $B$ and require the
(not necessarily full) filtration and the connection appearing in a $G$-oper to be compatible
with it, thus leading to the natural objects which we call {\it generalized $B$-opers}, and for which we consider geometric structures arising through them in our main Theorem~\ref{thm2}.

The paper is organized as follows. We first review some of the basic definitions and properties of classical opers, following~\cite{Opers1} and~\cite{MR1896178} (done in Section~\ref{review}), and then introduce in Section~\ref{Boper} what we call {\it generalized $B$-opers} and their main properties. These are triples $(E,{\mathcal F}, D)$, where~$\mathcal F$ is a $B$-filtration of a holomorphic vector bundle $E$ associated to a~non-degenerate bilinear form~$B$ (as in Definition~\ref{parity}) and~$D$ is a $B$-connection on~$E$ (as in Definition~\ref{Bcon}), such that for every $1 \leq i \leq n-1$ the condition $D(E_i) \subset E_{i+1}\otimes K_\Sigma$ holds, and the resulting homomorphism
\begin{gather*}
E_i/E_{i-1} \longrightarrow (E_{i+1}/E_i)\otimes K_\Sigma
\end{gather*}
(constructed in \eqref{e3}) is an isomorphism. Generalized $B$-opers are closely related to jets, and we investigate this correspondence in Section~\ref{jets}, where in particular we show in Theorem~\ref{prop3} and Proposition~\ref{lem4} that there is a correspondence between the generalized $B$-opers $(E,{\mathcal F}, D)$ on~$\Sigma$ and triples consisting of the following:
 \begin{itemize}\itemsep=0pt
\item[(1)] a fiberwise non-degenerate symmetric bilinear form ${\mathbb B}_n$ on
$\CQ ^* := \big(E/E_{n-1}\otimes K_\Sigma^{(n-1)/2}\big)^*$,
\item[(2)] a holomorphic connection on $\CQ^*$ that preserves this bilinear form ${\mathbb B}_n$, and
\item[(3)] a classical ${\rm Sp}(n, {\mathbb C})$-oper for $n$ even and a classical ${\rm SO}(n, {\mathbb C})$-oper for $n$ odd.
\end{itemize}

The above correspondence of spaces can be taken further, which is done in Section~\ref{construction}. More precisely, the known relation between classical opers and projective structures can be extended to this generalized setting, leading to one of our main results:

\medskip

\noindent {\bf Theorem \ref{thm2}.} {\it For integers $n \geq 2$, $n \not= 3$ and $r \ge 1$, the
space of all generalized $B$-opers of filtration length $n$ and $\mbox{rank}(E_i/E_{i-1})=r$ is in correspondence with
\[
{\mathcal C}_\Sigma\times {\mathfrak P}(\Sigma)\times \left(\bigoplus_{i=2}^{[n/2]} H^0\big(\Sigma, K^{\otimes 2i}_\Sigma\big)\right) , \]
where ${\mathcal C}_\Sigma$ denotes the space of all flat orthogonal bundles of rank $r$ on $\Sigma$, which is independent of $i$, and
${\mathfrak P}(\Sigma)$ is the space of all projective structures on $\Sigma$. }

\medskip

We conclude the paper describing certain naturally defined Higgs bundles appearing through
generalized $B$-opers which carry certain Lagrangian structure. In Section~\ref{sec:higgs}
we initiate the study of those Higgs bundles, leaving to future work their further study.

\section[Generalized $B$-opers]{Generalized $\boldsymbol{B}$-opers}

In what follows we shall first give a brief overview of classical $G$-opers for $G$ a
connected reductive complex Lie group (this is done in Section~\ref{intro}), and then introduce our
generalization, what we call ``Generalized $B$-opers'', in Section~\ref{Boper}.

Let $\Sigma$ be a compact connected Riemann surface of genus $g$; its holomorphic cotangent bundle will be
denoted by $K_\Sigma$.

\subsection[$G$-Opers]{$\boldsymbol{G}$-Opers}\label{review}\label{intro}

We shall recall here the following definitions from Beilinson and Drinfeld's paper~\cite{Opers1}. Fix a~Borel subgroup $B \subset G$ together with a maximal torus $H \subset B$. Let $\mathfrak{h} \subset
\mathfrak{b} \subset \mathfrak{g}$ be the Lie algebras of $H \subset B \subset G$ respectively. Fix a set of
positive simple roots $\Gamma \subset \mathfrak{h}^*$ with respect to~$\mathfrak{b}$. For each~$\alpha$ in the dual space $\mathfrak{h}^*$, set
\begin{gather*}
\mathfrak{g}^\alpha := \set{ x \in \mathfrak{g} \,|\, [a, x] = \alpha(a)x \ \forall\,a \in \mathfrak{h} }.
\end{gather*}
Then, there is a unique Lie algebra grading $\mathfrak{g} = \bigoplus_k \mathfrak{g}_k$ such that the following holds
\begin{gather*}
\mathfrak{g}_0 = \mathfrak{h}, \qquad \mathfrak{g}_1 = \bigoplus_{\alpha \in \Gamma} \mathfrak{g}^\alpha, \qquad \mathfrak{g}_{-1} = \bigoplus_{\alpha \in \Gamma} \mathfrak{g}^{-\alpha}.
\end{gather*}

Let $P$ be a holomorphic principal $B$-bundle on $\Sigma$, and let $\mathcal{E}_P$ be the Lie algebroid over $\Sigma$ of infinitesimal symmetries of $P$. This $\mathcal{E}_P$ is same as the Atiyah bundle for $P$ \cite{At11}. Let $\mathcal{E}_P^{\mathfrak{g}}$ be the Lie algebroid over $\Sigma$ of infinitesimal symmetries of the holomorphic principal $G$-bundle $Q
 = P \times_B G$ obtained by extending the structure group of~$P$ to~$G$.
Let $\mathfrak{b}_P := P(\mathfrak{b})$ (respectively, $\mathfrak{g}_P := P(\mathfrak{g})$)
be the holomorphic vector bundle over $\Sigma$ associated to $P$ for the $B$-module $\mathfrak{b}$ (respectively,
$\mathfrak{g}$); so $\mathfrak{b}_P$ is the adjoint bundle $\operatorname{ad}(P)$.
Then, there is a filtration $\mathfrak{g}_P^k \subset \mathfrak{g}_P$, and we shall let $\mathcal{E}_P^k$ be the preimage of $\mathfrak{g}_P^k / \mathfrak{b}_P \subset \mathfrak{g}_P / \mathfrak{b}_P = \mathcal{E}_P^{\mathfrak{g}} / \mathcal{E}_P$. One can then define another filtration $\mathcal{E}_P^k \subset \mathcal{E}_P^{\mathfrak{g}}$, $k \le 0$.
Considering $\mathfrak{g}_P^{-\alpha}$ the $P$-twist of the $B$-module $\mathfrak{g}^{-\alpha}$, one has that $\mathcal{E}_P^{-1} / \mathcal{E}_P = \mathfrak{g}_P^{-1} / \mathfrak{b}_P = \bigoplus_{\alpha \in \Gamma} \mathfrak{g}_P^{-\alpha}$.

\begin{Definition}[{$G$-oper}]\label{operdef} A $G$-oper on $\Sigma$ is a holomorphic principal $B$-bundle $P$ on $\Sigma$ with a holomorphic connection $\omega\colon T\Sigma \longrightarrow \mathcal{E}_P^{\mathfrak{g}}$ such that $\omega(T\Sigma) \subset \mathcal{E}_P^{-1}$, and for every $\alpha \in \Gamma$, the following composition of homomorphisms is an isomorphism
\begin{gather*}
T\Sigma \stackrel{\omega}{\longrightarrow} \mathcal{E}_P^{-1} \longrightarrow \mathcal{E}_P^{-1} / \mathcal{E}_P \longrightarrow \mathfrak{g}_P^{-\alpha} .
\end{gather*}
\end{Definition}

In order to draw a parallel between $G$-opers and our generalization, we shall focus now on ${\rm SL}(n,\C)$-opers, which can be described as follows from~\cite{Opers1}. Indeed, opers are some vector bundles satisfying certain conditions that give them an oper structure.

\begin{Definition}\label{defGLnoper} A ${\rm GL}(n,\C)$-oper on $\Sigma$ is a~holomorphic vector bundle~$E$ of rank~$n$ with a~complete filtration of holomorphic subbundles
\begin{gather*}
0 = E_0 \subsetneq E_1 \subsetneq E_2 \subsetneq \cdots \subsetneq E_n = E
\end{gather*}
and a holomorphic connection $D\colon E \to E \otimes K_\Sigma$ such that
\begin{enumerate}\itemsep=0pt
\item[1)] (the filtration is a complete flag) the quotient $E_i / E_{i-1}$ is a line bundle for all $1 \le i \le n$;
\item[2)] (Griffiths' transversality) $D(E_i) \subset E_{i+1} \otimes K_\Sigma$ for $1 \le i \le n-1$;
\item[3)] (non-degeneracy) and $D$ induces an isomorphism
\begin{gather*}
E_i / E_{i-1} \xrightarrow{\sim} (E_{i+1} / E_i) \otimes K_\Sigma .
\end{gather*}
\end{enumerate}
We denote this ${\rm GL}(n,\C)$-oper by $(E, \set{E_i}, D)$.
\end{Definition}

\begin{Remark}\label{rem1} An ${\rm SL}(n,\C)$-oper is a ${\rm GL}(n,\C)$-oper $(E, \set{E_i}, D)$ on $\Sigma$ such that the determinant line bundle $\bigwedge^n E$ is holomorphically trivial and the connection on $\bigwedge^n E$ induced by $D$ coincides with the trivial connection on ${\mathcal O}_\Sigma$. An ${\rm Sp}(2n,\C)$-oper is an ${\rm SL}(2n,\C)$-oper $(E, \set{E_i}, D)$ with a horizontal symplectic form on $E$ such that $E_i^\perp = E_{n-i}$. An ${\rm SO}(n,\C)$-oper is an ${\rm SL}(n)$-oper $(E, \set{E_i}, D)$ with a~horizontal non-degenerate symmetric bilinear form $B$ on $E$ such that $E_i^\perp = E_{n-i}$.
\end{Remark}

\subsection[Second fundamental form and generalized $B$-opers]{Second fundamental form and generalized $\boldsymbol{B}$-opers}\label{Boper}

In order to generalize the above definitions to account for the structure appearing through bilinear forms and their preserved filtrations, consider a pair $(E, B)$, where~$E$ is a holomorphic vector bundle over $\Sigma$, and
\begin{gather}\label{1}
B \colon \ E\otimes E \longrightarrow {\mathcal O}_\Sigma
\end{gather}
is a holomorphic homomorphism such that for every point $x \in \Sigma$, the bilinear
form
\[
B(x) \colon \ E_x\otimes E_x \longrightarrow \mathbb C
\]
is non-degenerate, meaning for every point $x \in \Sigma$ and each nonzero vector
$v \in E_x$ there is some $w \in E_x$ such that $B(x)(v, w) \not= 0$.

\begin{Definition}The form $B$ in \eqref{1} is called {\em symplectic} (respectively, {\em orthogonal})
if it satisfies $B(x)(v, w)=- B(x)(w, v)$ (respectively, $B(x)(v, w)= B(x)(w, v)$) for all
$x \in \Sigma$ and for all $v, w \in E_x$.
\end{Definition}

The bilinear forms we shall consider in the present paper will be either symplectic or orthogonal.
Let
\begin{gather*}p_0 \colon \ E \longrightarrow \Sigma\end{gather*} be the natural projection.
For any holomorphic subbundle $F \subset E$, define the subbundle
\begin{gather*}
F^\perp := \{w \in E \,|\, B(p_0(w))(w, v) = 0\ \forall\,v \in F_{p_0(w)}\} \subset E .
\end{gather*}

Note that since $B(p_0(w))(w, v) = 0$ if and only if $B(p_0(w))(v, w) = 0$, the subbundle $F^\perp$ does not change if $B(p_0(w))(w, v)$ in the definition of $F^\perp$ is replaced by $B(p_0(w))(v, w)$.

\begin{Lemma}\label{lem-1} The $C^\infty$ subbundle $F^\perp \subset E$ is a holomorphic subbundle. Moreover, $F^\perp$ is canonically isomorphic to the dual bundle $(E/F)^*$. Also, $\big(F^\perp\big)^\perp = F$.
\end{Lemma}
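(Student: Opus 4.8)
The plan is to repackage $F^\perp$ as the preimage of a standard annihilator bundle under the isomorphism that the non-degenerate form $B$ induces between $E$ and its dual. First I would observe that $B$ in \eqref{1} defines a holomorphic bundle homomorphism
\[
\phi \colon \ E \longrightarrow E^*, \qquad w \longmapsto B(p_0(w))(w, \, \cdot\, ),
\]
which is holomorphic because $B$ is, and which is fiberwise injective by the non-degeneracy hypothesis; since $E$ and $E^*$ have the same rank, $\phi$ is an isomorphism of holomorphic vector bundles.

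Next I would introduce, for the holomorphic subbundle $F \subset E$, its annihilator $\mathrm{Ann}(F) \subset E^*$, defined as the kernel of the surjective bundle map $E^* \to F^*$ dual to the inclusion $F \hookrightarrow E$. Because $F$ is a subbundle, this restriction map has locally constant rank, so $\mathrm{Ann}(F)$ is a holomorphic subbundle of $E^*$; moreover, it is canonically identified with $(E/F)^*$ via the dual of the quotient map $E \to E/F$. The main step is then the identity $F^\perp = \phi^{-1}(\mathrm{Ann}(F))$: by definition $w \in F^\perp$ precisely when $B(p_0(w))(w, v) = 0$ for all $v \in F_{p_0(w)}$, which says exactly that the functional $\phi(w)$ annihilates $F$, i.e.\ $\phi(w) \in \mathrm{Ann}(F)$. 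Since $\phi$ is a holomorphic isomorphism and $\mathrm{Ann}(F)$ is a holomorphic subbundle, this exhibits $F^\perp$ as a holomorphic subbundle, and $\phi$ restricts to a canonical isomorphism $F^\perp \xrightarrow{\sim} \mathrm{Ann}(F) \cong (E/F)^*$, yielding the first two assertions simultaneously.

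For the final claim $\big(F^\perp\big)^\perp = F$, the identification above gives $\mathrm{rank}(F^\perp) = \mathrm{rank}(E) - \mathrm{rank}(F)$, so applying it twice yields $\mathrm{rank}\big(\big(F^\perp\big)^\perp\big) = \mathrm{rank}(F)$. Using the symmetry remark preceding the lemma (that $B(p_0(w))(w,v) = 0$ if and only if $B(p_0(w))(v,w) = 0$), every $v \in F$ satisfies $B(v,w) = 0$ for all $w \in F^\perp$, hence $F \subset \big(F^\perp\big)^\perp$. An inclusion of subbundles of equal rank is an equality fiberwise, so $\big(F^\perp\big)^\perp = F$.

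I expect the only delicate point to be confirming that $\mathrm{Ann}(F)$, equivalently $F^\perp$, has locally constant fiber dimension, so that it is a genuine subbundle and not merely a coherent subsheaf; this is exactly where the hypothesis that $F$ is a subbundle (rather than a singular subsheaf) enters, guaranteeing that the dual restriction $E^* \to F^*$ has constant rank. Everything else reduces to transporting the well-known linear-algebraic facts about annihilators through the canonical isomorphism $\phi$.
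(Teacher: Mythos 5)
Your proof is correct, but it reaches the main claim by a genuinely different route than the paper. The paper proves holomorphicity of $F^\perp$ analytically: holomorphicity of $B$ yields the Leibniz-type identity $\overline{\partial}B(s_1, s_2) = B\big(\overline{\partial}_E(s_1), s_2\big)+ B\big(s_1,\overline{\partial}_E(s_2)\big)$ (equation \eqref{bi}) for the Dolbeault operator $\overline{\partial}_E$, from which one checks that $\overline{\partial}_E$ carries smooth sections of $F^\perp$ into $F^\perp\otimes\Omega^{0,1}_\Sigma$, so the $C^\infty$ subbundle is $\overline{\partial}_E$-stable and hence holomorphic; the identification with $(E/F)^*$ is then done separately, via the composition $F^\perp \hookrightarrow E \stackrel{\sim}{\longrightarrow} E^*$ (essentially your $\phi$), and $\big(F^\perp\big)^\perp = F$ is simply asserted to be immediate. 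You instead let the algebraic identification do all the work at once: $F^\perp = \phi^{-1}(\mathrm{Ann}(F))$, with $\mathrm{Ann}(F) = \ker(E^* \to F^*)$ a holomorphic subbundle canonically equal to $(E/F)^*$, so holomorphicity and the duality statement fall out simultaneously with no $\overline{\partial}$-computation. Each approach buys something: yours is shorter, makes explicit the constant-rank point that the paper's phrase ``$C^\infty$ subbundle'' quietly presupposes, and supplies the inclusion-plus-rank-count argument for $\big(F^\perp\big)^\perp = F$ that the paper omits --- and you correctly route the inclusion $F \subset \big(F^\perp\big)^\perp$ through the symmetry remark preceding the lemma, which is genuinely needed, since for a general non-degenerate bilinear form the left and right orthogonal complements can differ. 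The paper's Dolbeault identity, on the other hand, is not wasted effort in context: it is the $\overline{\partial}$-analogue of the $B$-connection compatibility \eqref{bc} that is exploited repeatedly later (e.g., in Lemma~\ref{lem12}), which likely explains the authors' choice.
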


\begin{proof}Let $\overline{\partial}_E \colon E \longrightarrow E\otimes \Omega^{0,1}_\Sigma = E\otimes \overline{K}_\Sigma$ be the Dolbeault operator defining
the holomorphic structure of the holomorphic
vector bundle~$E$. The given condition that the bilinear form $B$ is holomorphic implies
that
\begin{gather}\label{bi}
\overline{\partial}B(s_1, s_2) = B\big(\overline{\partial}_E(s_1), s_2\big)+ B\big(s_1,\overline{\partial}_E(s_2)\big)
\end{gather}
for all locally defined $C^\infty$ sections $s_1$ and $s_2$ of $E$. Since $F$ is a holomorphic
subbundle of $E$, it follows that $\overline{\partial}(s)$ is a $C^\infty$ (local) section of
$F\otimes\Omega^{0,1}_\Sigma$ for every (local) $C^\infty$ section~$s$ of~$F$. Therefore if~$s_1$ is a~$C^\infty$ locally defined section of $F^\perp$ and $s_2$ is a $C^\infty$ locally defined section of~$F$, then from~\eqref{bi} it follows immediately that
\[
B\big(\overline{\partial}_E(s_1), s_2\big) = 0 .
\]
Now, this implies that $\overline{\partial}_E(s_1)$ is a locally defined section of~$F^\perp\otimes\Omega^{0,1}_\Sigma$. Consequently, $F^\perp$ is a~holomorphic subbundle on~$E$.

The bilinear form $B$ identifies $E$ with its dual $E^*$. Consider the
following composition of homomorphisms
\[
F^\perp \hookrightarrow E \stackrel{\sim}{\longrightarrow} E^* ,
\]
which is injective. The image of $F^\perp$ in $E^*$ evidently coincides with the image of the natural injective homomorphism
\begin{gather*}
(E/F)^* \longrightarrow E^* .
\end{gather*}
Consequently, $F^\perp$ is identified with $(E/F)^*$.

Finally, from the definition of $F^\perp$ it follows immediately that $(F^\perp)^\perp = F$.
\end{proof}

In the case of Hermitian holomorphic vector bundles, Kobayashi showed in \cite{MR3643615} that
$F^\perp$ is not a holomorphic subbundle if $F$ is not preserved by the
Chern connection on $E$ (see Definition~\ref{def2}).

\begin{Definition}\label{parity} A $B$-\textit{filtration} of a holomorphic vector bundle~$E$ is an increasing filtration $\mathcal{F}$ of holomorphic subbundles
\begin{gather}\label{e4}
0 = E_0 \subsetneq E_1 \subsetneq E_2 \subsetneq \cdots \subsetneq E_{n-1} \subsetneq E_{n-1} \subsetneq E_{n} = E
\end{gather}
for which the following two conditions hold:
\begin{enumerate}\itemsep=0pt
\item[1)] the length $n\in \mathbb N$ of the filtration is even when $B$ is symplectic, and
it is odd when $B$ is orthogonal;
\item[2)] $E^\perp_i = E_{n-i}$ for all $1 \leq i \leq n-1$.
\end{enumerate}
\end{Definition}

Holomorphic connections where introduced by Atiyah in~\cite{At11} (see also~\cite{at}). We recall that a~holomorphic connection on a~holomorphic vector bundle~$W$ on~$\Sigma$ is a~holomorphic differential operator
\begin{gather*}
D \colon \ W \longrightarrow W\otimes K_\Sigma
\end{gather*}
such that
\begin{gather}\label{e2}
D(fs) = fD(s)+s\otimes {\rm d}f
\end{gather}
for all locally defined holomorphic sections $s$ of $W$ and all locally defined holomorphic functions~$f$ on~$\Sigma$.

Note that the Leibniz condition in \eqref{e2} implies that the order of the differential
operator~$D$ is one. Moreover, a holomorphic connection on $\Sigma$ is automatically flat
because the sheaf of holomorphic two-forms on $\Sigma$ is the zero sheaf. Through this, we can
impose the following compatibility condition with respect to the bilinear form~$B$.

\begin{Definition}\label{Bcon} Let $E$ be a holomorphic vector bundle on $\Sigma$ equipped with a bilinear form~$B$. A~$B$-\textit{connection} on~$E$ is a holomorphic connection $D$ on $E$ such that
\begin{gather}\label{bc}
\partial (B(s, t)) = B(D(s), t) + B(s, D(t))
\end{gather}
for all locally defined holomorphic sections $s$ and $t$ of~$E$. (Note that $\partial (B(s, t)) = {\rm d}(B(s, t))$ because $B(s, t)$ is a holomorphic function.)
\end{Definition}

Given a holomorphic subbundle $F \subset E$ and the corresponding quotient map
\begin{gather*}q_F \colon \ E \longrightarrow E/F ,\end{gather*} it follows from \eqref{e2} that the composition
of homomorphisms
\begin{gather}\label{2}
F \hookrightarrow E \stackrel{D}{\longrightarrow} E\otimes K_\Sigma
\stackrel{q_F\otimes {\rm Id}_{K_\Sigma}}{\longrightarrow} (E/F)\otimes K_\Sigma ,
\end{gather}
is in fact ${\mathcal O}_\Sigma$-linear.

\begin{Definition}\label{def2b}
The holomorphic section
\begin{gather*}
S(D, F) \in H^0(\Sigma, \operatorname{Hom}(F, E/F)\otimes K_\Sigma)
\end{gather*}
given by the composition of homomorphism in \eqref{2} is called the {\em second fundamental form} of $F$ for the connection~$D$.
\end{Definition}

The above construction of the second fundamental form in Definition~\ref{def2b} can be generalized from subbundles of $E$ to filtration of subbundles of~$E$ as follows.

\begin{Definition}\label{def2} Let $D$ be a holomorphic connection on $E$, and let
\[
F_1 \subset F_2 \subset E\qquad \text{and}\qquad F_3 \subset F_4 \subset E
\]
be holomorphic subbundles such that \begin{gather*}D(F_1) \subset F_3\otimes K_\Sigma\qquad \text{and}\qquad
D(F_2) \subset F_4\otimes K_\Sigma .\end{gather*} Then, the {\rm second fundamental form} of
$(F_1,F_2,F_3,F_4)$ for the connection $D$ is the map
\begin{align}\label{e3}
S(D; F_1,F_2,F_3, F_4)\colon \ F_2/F_1 & \longrightarrow (F_4/F_3)\otimes K_\Sigma,\\
s& \longmapsto D(\widetilde{s}),\nonumber
\end{align}
that sends any locally defined holomorphic section $s$ of $F_2/F_1$ to the image
of $D(\widetilde{s})$ in $(F_4/F_3)\otimes K_\Sigma$, where $\widetilde{s}$ is any
locally defined holomorphic section of the subbundle $F_2$ that projects to $s$.
\end{Definition}

In view of \eqref{e2}, note that the above conditions in Definition \ref{def2} imply that
$F_1 \subset F_3$ and $F_2 \subset F_4$.

\begin{Lemma}The section $S(D; F_1,F_2,F_3, F_4)(s)$ in~\eqref{e3} is independent of the lift $\widetilde{s}$ of $s$ to~$F_2$.
\end{Lemma}

\begin{proof}This follows from the condition $D(F_1) \subset F_3\otimes K_\Sigma$. Indeed,
for~$s$ a local section of~$F_2/F_1$, let $\widetilde{s}_1$, $\widetilde{s}_2$ be two lifts of $s$
to~$F_2$. This means $\widetilde{s}_1-\widetilde{s}_2$ is a local section of~$F_1$. Therefore,
from the given condition that $D(F_1) \subset F_3\otimes K_\Sigma$ we know that
\begin{gather*}
D(\widetilde{s}_1)-D(\widetilde{s}_2) = D(\widetilde{s}_1-\widetilde{s}_2)
\end{gather*}
is a local section of $F_3\otimes K_\Sigma$. Consequently, the section $S(D; F_1,F_2,F_3, F_4)(s)$ is independent of the lift $\widetilde{s}$ of $s$ to $F_2$.
\end{proof}

From \eqref{e2} it follows immediately that for $f$ a locally defined holomorphic function on~$\Sigma$ and~$s$ a locally defined holomorphic section of $F_2/F_1$ to~$F_2$, then the equality
\begin{gather*}
S(D; F_1,F_2,F_3, F_4)(fs) = f\cdot S(D; F_1,F_2,F_3, F_4)(s)
\end{gather*}
holds. In other words, $S(D; F_1,F_2,F_3, F_4)$ is an ${\mathcal O}_\Sigma$-linear homomorphism of coherent analytic sheaves.

Recall from Section \ref{review} that a ${\rm GL}(n,{\mathbb C})$-oper on $\Sigma$ is a~${\rm GL}(n,{\mathbb C})$-local system (that is a~holomorphic vector bundle~$E$ with a~holomorphic connection) equipped with a complete flag satisfying some extra conditions~(2) and~(3) in Definition~\ref{defGLnoper}. The following definition generalizes these two conditions.

\begin{Definition}\label{def1}A generalized $B$-{\it oper} is a triple $(E,{\mathcal F}, D)$, where $\mathcal F$ is a $B$-filtration
\[
0 = E_0 \subsetneq E_1 \subsetneq E_2 \subsetneq \cdots \subsetneq E_{n-2} \subsetneq E_{n-1} \subsetneq E_{n} = E
\]
as in \eqref{e4} and $D$ is a $B$-connection on $E$, such that
\begin{enumerate}\itemsep=0pt
\item[1)] $D(E_i) \subset E_{i+1}\otimes K_\Sigma$ for all $1 \leq i \leq n-1$, and
\item[2)] for every $1 \leq i \leq n-1$, the homomorphism
\begin{gather}\label{e5}
S(D; E_{i-1}, E_i,E_i, E_{i+1})\colon \ E_i/E_{i-1} \longrightarrow (E_{i+1}/E_i)\otimes K_\Sigma
\end{gather}
(constructed in \eqref{e3}) is an isomorphism. For simplicity of notation, we shall
sometimes write $S_i(D)$ for the above map $S(D; E_{i-1}, E_i,E_i, E_{i+1})$.
\end{enumerate}
\end{Definition}

\begin{Remark} Note that properties (1) and (2) from Definition \ref{def1} are natural counterparts to (2)
and~(3) in Definition~\ref{defGLnoper}. Moreover, the above definition can be seen as a
generalization of the opers studied in~\cite{Bi3}. Here we have the additional
structure of a bilinear form~$B$, and it fits within the broader picture of $(G,P)$-opers
which is introduced in the work of Collier and Sanders in~\cite{BA}.
\end{Remark}

\subsection[The underlying bundle of a generalized $B$-oper]{The underlying bundle of a generalized $\boldsymbol{B}$-oper}\label{sym11}

Let $(E, {\mathcal F}, D)$ be a~generalized $B$-oper as in Definition \ref{def1} above. In what follows we shall construct a~natural $K^{\otimes(n-1)}_\Sigma$-valued symmetric form ${\mathbb S}'$ on the subbundle
$E_1$ in Definition \ref{def1}. For this, first note that for any $1 \leq i \leq n-1$, the isomorphism
\[
S(D; E_{i-1}, E_i,E_i, E_{i+1}) \colon \ E_i/E_{i-1} \longrightarrow (E_{i+1}/E_i)\otimes K_\Sigma
\]
in~\eqref{e5} tensored with the identity map of $K^{\otimes(i-1)}_\Sigma$ produces isomorphisms $S_i$
defined by
\begin{gather}
S_i := S(D; E_{i-1}, E_i,E_i, E_{i+1})\otimes {\rm Id}_{K^{\otimes(i-1)}_\Sigma}\colon \nonumber\\
\qquad {} (E_i/E_{i-1})\otimes K^{\otimes (i-1)}_\Sigma
\longrightarrow (E_{i+1}/E_i)\otimes K_\Sigma
\otimes K^{\otimes (i-1)}_\Sigma = (E_{i+1}/E_i)\otimes K^{\otimes i}_\Sigma .\label{sym1}
\end{gather}
By composing the above maps from~\eqref{sym1} we obtain an isomorphism
\begin{gather}\label{e6}
{\mathbb S}' := S_{n-1}\circ\cdots\circ S_1\colon \ E_1 \longrightarrow (E_{n}/E_{n-1})\otimes K^{\otimes(n-1)}_\Sigma .
\end{gather}

\begin{Lemma}\label{lem11}The bilinear form $B$ produces the following isomorphism on a $B$-filtration as in
Definition~{\rm \ref{parity}}
\[
E^*_{n-i} \stackrel{\sim}{\longrightarrow} E_n/E_i
\]
for every $1 \leq i \leq n-1$.
\end{Lemma}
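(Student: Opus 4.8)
The plan is to deduce the claim directly from Lemma~\ref{lem-1} together with the defining property of a $B$-filtration in Definition~\ref{parity}. The key observation is that Lemma~\ref{lem-1} already supplies a canonical identification of the orthogonal complement of any holomorphic subbundle with the dual of the corresponding quotient, so the work reduces to applying it to the members of the filtration and invoking the symmetry $E_i^\perp = E_{n-i}$.

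Concretely, first I would fix $i$ with $1 \leq i \leq n-1$ and apply Lemma~\ref{lem-1} to the holomorphic subbundle $F = E_i \subset E$. That lemma yields a canonical isomorphism
\[
E_i^\perp \stackrel{\sim}{\longrightarrow} (E/E_i)^* = (E_n/E_i)^* ,
\]
using $E = E_n$. Next I would invoke condition~(2) of Definition~\ref{parity}, namely $E_i^\perp = E_{n-i}$, to rewrite the left-hand side and obtain a canonical isomorphism
\[
E_{n-i} \stackrel{\sim}{\longrightarrow} (E_n/E_i)^* .
\]
Finally, dualizing this isomorphism of holomorphic vector bundles and using the canonical identification of a vector bundle with its double dual gives
\[
E_{n-i}^* \stackrel{\sim}{\longrightarrow} E_n/E_i ,
\]
which is exactly the asserted isomorphism.

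Since the statement is a formal consequence of two results already established, there is no genuine analytic obstacle here; the only point requiring a small amount of care is the bookkeeping of the index shift and the range $1 \leq i \leq n-1$, ensuring that $E_{n-i}$ is a proper member of the filtration so that $(E_n/E_i)^*$ is the correct quotient, and that the composite isomorphism is indeed the canonical one induced by $B$ rather than an abstract isomorphism of bundles. I would therefore state explicitly that the isomorphism produced is the one coming from the identification of $E$ with $E^*$ via $B$, so that later naturality arguments may rely on its canonicity.
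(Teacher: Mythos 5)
Your proposal is correct and follows essentially the same route as the paper: the paper's proof likewise combines Lemma~\ref{lem-1} (giving $E_i^\perp \cong (E/E_i)^*$) with condition~(2) of Definition~\ref{parity} ($E_i^\perp = E_{n-i}$) and then dualizes. You merely make explicit the final dualization step and the double-dual identification, which the paper leaves implicit in ``the lemma follows from this.''
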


\begin{proof}
Since $E^\perp_i = E_{n-i}$ (see Definition \ref{parity}), from Lemma \ref{lem-1} we
conclude that $(E/E_i)^* = E^\perp_i = E_{n-i}$. The lemma follows from this.
\end{proof}

Using the identification between $E^*_1$ and $E_n/E_{n-1}$ in Lemma~\ref{lem11}, the isomorphism ${\mathbb S}'$ in~\eqref{e6} can be seen as a holomorphic
section
\begin{gather}\label{e7}
{\mathbb S}' \in H^0\big(\Sigma, (E^*_1)^{\otimes 2}\otimes K^{\otimes(n-1)}_\Sigma\big) =
H^0\big(\Sigma, (E_n/E_{n-1})^{\otimes 2}\otimes K^{\otimes(n-1)}_\Sigma\big).
\end{gather}

\begin{Lemma}\label{lem12}Take a holomorphic vector field $v \in H^0(U, T\Sigma)$ on an open subset
$U \subset \Sigma$, and define the map
\begin{align}
V \colon \ H^0(U, E) &\longrightarrow H^0(U, E),\nonumber\\
s & \longmapsto D(s)(v) ,\label{mapaV}
\end{align}
where as usual, $H^0(U, E)$ denotes the space of all holomorphic sections of~$E$ over~$U$. The map~$V$ from~\eqref{mapaV} satisfies the identity
\[
B(V(s), t) + B(s, V(t)) = v(B(s, t)) .
\]
\end{Lemma}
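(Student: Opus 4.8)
The plan is to unwind the definitions and reduce the claimed identity to the $B$-connection compatibility condition \eqref{bc}. The map $V$ is built from the holomorphic connection $D$ by contracting $D(s) \in H^0(U, E \otimes K_\Sigma)$ against the vector field $v$; that is, $V(s) = \langle D(s), v\rangle$, where the pairing uses the duality between $T\Sigma$ and $K_\Sigma$. The key observation is that the defining equation \eqref{bc} of a $B$-connection is an identity between holomorphic $1$-forms, namely $\partial(B(s,t)) = B(D(s),t) + B(s,D(t))$ as sections of $K_\Sigma$. I would therefore first note that evaluating this $K_\Sigma$-valued identity on the vector field $v$ yields a pointwise scalar identity on $U$.

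First I would make the contraction precise. For the left-hand side, since $B(s,t)$ is a holomorphic function on $U$ and $\partial(B(s,t)) = \mathrm{d}(B(s,t))$ (as remarked after Definition~\ref{Bcon}), pairing with $v$ gives exactly the directional derivative $v(B(s,t))$. For the right-hand side, the terms $B(D(s),t)$ and $B(s,D(t))$ are $K_\Sigma$-valued because $D$ raises the target by $K_\Sigma$ and $B$ is $\mathcal O_\Sigma$-bilinear; contracting with $v$ distributes into each factor as $B(\langle D(s),v\rangle, t) = B(V(s),t)$ and $B(s,\langle D(t),v\rangle) = B(s,V(t))$. Assembling these three evaluations reproduces precisely the asserted identity
\[
B(V(s), t) + B(s, V(t)) = v(B(s, t)).
\]

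The main steps, in order, are: (i) identify $V(s)$ as the contraction $\langle D(s), v\rangle$ and record that $V$ is $\C$-linear (though not $\mathcal O_U$-linear, since $D$ satisfies the Leibniz rule \eqref{e2}); (ii) invoke \eqref{bc} as an identity of holomorphic sections of $K_\Sigma$ over $U$; (iii) apply the contraction $\langle \,\cdot\,, v\rangle$ to both sides, using that this contraction is $\mathcal O_U$-linear so that it passes through $B$ into each argument; and (iv) simplify the left side via $\partial(B(s,t))(v) = \mathrm{d}(B(s,t))(v) = v(B(s,t))$. The argument is essentially a formal evaluation, so I do not expect a genuine obstacle; the only point requiring a little care is the bookkeeping of where the $K_\Sigma$ factor sits in each term, so that the contraction with $v$ is applied to the correct tensor slot and the $B$-bilinearity is used consistently. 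This is a routine verification rather than a substantive difficulty, and it follows immediately once \eqref{bc} is read as a $1$-form identity and contracted with $v$.
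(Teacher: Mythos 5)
Your proof is correct and follows exactly the paper's argument: the paper likewise reads the $B$-connection condition \eqref{bc} as an identity of $K_\Sigma$-valued $1$-forms and contracts both sides with the vector field $v$, using $\partial(B(s,t)) = {\rm d}(B(s,t))$ to obtain $v(B(s,t))$ on the left. Your version simply spells out the bookkeeping of the contraction through each tensor slot, which the paper leaves implicit.
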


\begin{proof} This can be proved using~\eqref{bc}. Indeed, since~$D$ is a $B$-connection, the following equality of $1$-forms on $\Sigma$ holds
\begin{gather}\label{eq:DBconn}
{\rm d}(B(s,t)) = B(D(s),t) + B(s,D(t))
\end{gather}
(see \eqref{bc}), and thus contracting both sides of~\eqref{eq:DBconn} by the vector field~$v$ the identity in the lemma is obtained.
\end{proof}

\begin{Lemma} \label{lem22} Let $(E, {\mathcal F}, D)$ be a generalized $B$-oper as in Definition~{\rm \ref{def1}}. Take $s, t \!\in\! H^0(U, E_1)$. Then,
\begin{gather}\label{e8}
B\big(V^{n-1}(s), t\big) + (-1)^{n-2} B\big(s, V^{n-1}(t)\big) = 0
\end{gather}
for the map $V$ in~\eqref{mapaV}.
\end{Lemma}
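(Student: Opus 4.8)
The plan is to exploit two structural features of the map $V$ together with the Leibniz-type identity of Lemma~\ref{lem12}. First, Griffiths transversality (condition~(1) of Definition~\ref{def1}) gives $D(E_i) \subset E_{i+1}\otimes K_\Sigma$, so contracting with $v$ shows $V(E_i) \subset E_{i+1}$; iterating, $V^p(s) \in H^0(U, E_{1+p})$ and $V^q(t) \in H^0(U, E_{1+q})$ whenever $s, t \in H^0(U, E_1)$. Second, the $B$-filtration relation $E_i^\perp = E_{n-i}$ of Definition~\ref{parity}, combined with Lemma~\ref{lem-1}, yields $B(E_a, E_b) = 0$ as soon as $a + b \le n$ (indeed $E_b \subseteq E_{n-a} = E_a^\perp$ precisely when $b \le n-a$). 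Writing $c_{p,q} := B(V^p(s), V^q(t))$, these two observations combine to give the vanishing
\[
c_{p,q} = 0 \qquad \text{whenever } p + q \le n-2,
\]
since then $(1+p) + (1+q) \le n$.

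Next I would iterate the identity of Lemma~\ref{lem12}. Applying it to the sections $V^p(s)$ and $V^q(t)$ in place of $s, t$ gives $v(c_{p,q}) = c_{p+1,q} + c_{p,q+1}$ for all $p, q \ge 0$. Specializing to the diagonal $p + q = n-2$ and using the vanishing above, the left-hand side is $v(0) = 0$, so
\[
c_{p+1, q} + c_{p, q+1} = 0 \qquad (p + q = n-2).
\]
This is a recursion relating adjacent entries on the diagonal $p + q = n-1$: it says $c_{k,\,n-1-k} = -\,c_{k-1,\,n-k}$ for each $1 \le k \le n-1$.

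Telescoping this recursion from $k = n-1$ down to $k = 1$ produces $n-1$ sign changes, giving $c_{n-1, 0} = (-1)^{n-1} c_{0, n-1}$, that is, $B(V^{n-1}(s), t) = (-1)^{n-1} B(s, V^{n-1}(t))$. Since $(-1)^{n-1} = -(-1)^{n-2}$, this is exactly the asserted identity~\eqref{e8}. I expect the main bookkeeping obstacle to be the careful verification of the vanishing range $p + q \le n-2$, where one must track both the filtration level of $V^p(s)$, $V^q(t)$ and the precise form of $E_i^\perp$, together with keeping the alternating sign correct through the telescoping; I note that the symmetry type of $B$ (symplectic versus orthogonal) never enters the argument directly, and manifests only through the parity of $n$ in the final sign.
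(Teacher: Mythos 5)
Your proposal is correct and follows essentially the same route as the paper's own proof: both apply the Leibniz identity of Lemma~\ref{lem12} to the pairs $\big(V^{p}(s), V^{q}(t)\big)$, kill the derivative term via Griffiths transversality together with $E_i^\perp = E_{n-i}$ (the paper's relations ${\mathcal T}_i = 0$ are exactly your recursion on the diagonal $p+q = n-2$), and then telescope with alternating signs. Your $c_{p,q}$ bookkeeping, with the explicit vanishing range $p+q \le n-2$, is just a tidier presentation of the identical argument.
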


\begin{proof}Through Lemma \ref{lem12}, for $s, t \in H^0(U, E_1) \subset H^0(U, E)$ we have that
\begin{gather}\label{b1}
B\big(V^{n-1}(s), t\big) + B\big(V^{n-2}(s), V(t)\big) = v\big(B(V^{n-2}(s), t)\big) = 0 ,
\end{gather}
because $V^{n-2}(s) \in H^0(U, E_{n-1})$ by Definition \ref{def1}(2), and $E^\perp_1 = E_{n-1}$ (see Definition~\ref{parity}(2)). Similarly, we have
\begin{gather}\label{b2}
B\big(V^{n-2}(s), V(t)\big)+ B\big(V^{n-3}(s), V^2(t)\big) = v\big(B\big(V^{n-3}(s), V(t)\big)\big) = 0 ,
\end{gather}
because
\begin{itemize}\itemsep=0pt
\item $V^{n-3}(s) \in H^0(U, E_{n-2})$ (by Definition~\ref{def1}(1)),
\item $V(t) \in H^0(U, E_{2})$ (by Definition \ref{def1}(1)), and
\item $E^\perp_2 = E_{n-2}$ (Definition \ref{parity}(2)).
\end{itemize}
Iterating the argument for \eqref{b1} and \eqref{b2}, more generally we get that
\begin{gather}\label{b3}
{\mathcal T}_i := B\big(V^{n-i}(s), V^{i-1}(t)\big)+ B\big(V^{n-i-1}(s), V^i(t)\big) = 0
\end{gather}
for all $n-1 \leq i \geq 1$. Taking alternating sum, from \eqref{b3} we have
\[
(-1)^{i-1}{\mathcal T}_i = B\big(V^{n-1}(s), t\big) + (-1)^{n-2} B\big(s, V^{n-1}(t)\big) = 0 .
\]
This completes the proof.
\end{proof}

\begin{Proposition}\label{prop1}\mbox{}The section ${\mathbb S}'$ in \eqref{e7} lies in the subspace
\[
H^0\big(\Sigma, \operatorname{Sym}^2(E^*_1)\otimes K^{\otimes(n-1)}_\Sigma\big) \subset
H^0\big(\Sigma, (E^*_1)^{\otimes 2}\otimes K^{\otimes(n-1)}_\Sigma\big) .\]

This $K^{\otimes(n-1)}_\Sigma$-valued symmetric form ${\mathbb S}'$ on $E_1$ is fiberwise non-degenerate.
\end{Proposition}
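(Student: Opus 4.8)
The plan is to identify the bilinear form carried by $\mathbb{S}'$ explicitly in terms of the operator $V$ from Lemma~\ref{lem12}, and then to read off both symmetry and non-degeneracy from results already established. Work over a small open set $U \subset \Sigma$ on which a nowhere-vanishing holomorphic vector field $v$ is fixed. First I would record how $\mathbb{S}'$ acts: after contracting the $K^{\otimes(n-1)}_\Sigma$ factor of the composition $S_{n-1}\circ\cdots\circ S_1$ in \eqref{e6} with $v^{\otimes(n-1)}$, the map sends $s \in H^0(U, E_1)$ to the class of $V^{n-1}(s)$ in $E_n/E_{n-1}$. This is precisely the assertion that each second fundamental form $S_i$ in \eqref{sym1} is the map induced by $V$ on the successive quotients, so that iterating reproduces $V^{n-1}(s)$ modulo $E_{n-1}$. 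Under the identification $E_n/E_{n-1} \cong E^*_1$ of Lemma~\ref{lem11} (the case $i = n-1$), which sends the class of $w$ to the functional $t \mapsto B(w, t)$ on $E_1$ (well defined because $E_{n-1} = E_1^\perp$), the bilinear form determined by $\mathbb{S}'$ is therefore
\[
\Phi(s, t) = B\big(V^{n-1}(s), t\big), \qquad s, t \in H^0(U, E_1),
\]
taking values in $K^{\otimes(n-1)}_\Sigma$ once the contraction with $v$ is reinstated.

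Next I would establish symmetry. By Lemma~\ref{lem22} we have $\Phi(s,t) = B(V^{n-1}(s), t) = (-1)^{n-1} B(s, V^{n-1}(t))$, and I would then invoke the (anti)symmetry of $B$ together with the parity constraint of Definition~\ref{parity}(1). When $n$ is even the form $B$ is symplectic, so $B(s, V^{n-1}(t)) = -B(V^{n-1}(t), s)$ and the sign factors combine to $(-1)^n = 1$; when $n$ is odd the form $B$ is orthogonal, so $B(s, V^{n-1}(t)) = B(V^{n-1}(t), s)$ while $(-1)^{n-1} = 1$. In both cases $\Phi(s,t) = B(V^{n-1}(t), s) = \Phi(t, s)$, which is exactly the assertion that $\mathbb{S}'$ lies in $\operatorname{Sym}^2(E^*_1)\otimes K^{\otimes(n-1)}_\Sigma$. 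Since symmetry of a section of $(E^*_1)^{\otimes 2}\otimes K^{\otimes(n-1)}_\Sigma$ is a pointwise condition and contraction against $v^{\otimes(n-1)}$ is an isomorphism wherever $v$ is nonzero, verifying the identity locally for such $v$ suffices.

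Finally, non-degeneracy is immediate: each $S_i$ in \eqref{sym1} is an isomorphism by Definition~\ref{def1}(2), so the composition $\mathbb{S}'$ in \eqref{e6} is an isomorphism, and the identification of Lemma~\ref{lem11} is an isomorphism as well. Hence the induced form $\Phi$ on each fiber of $E_1$ is non-degenerate.

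The main obstacle I anticipate lies in the first step: verifying carefully, with the bookkeeping of the tensor powers $K^{\otimes(i-1)}_\Sigma$ in \eqref{sym1}, that the iterated second fundamental form $S_{n-1}\circ\cdots\circ S_1$ genuinely computes the class of $V^{n-1}(s)$ in $E_n/E_{n-1}$, so that Lemma~\ref{lem22} applies to $\Phi$. The sign and parity accounting in the symmetry step is routine once this identification is in place.
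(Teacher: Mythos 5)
Your proposal is correct and follows essentially the same route as the paper: the paper's proof likewise splits on the parity of $n$ (orthogonal versus symplectic $B$) and deduces symmetry of ${\mathbb S}'$ directly from the identity \eqref{e8} of Lemma~\ref{lem22}, with non-degeneracy read off from the fact that ${\mathbb S}'$ in \eqref{e6} is a composition of isomorphisms. The identification you flag as the main obstacle --- that contracting $S_{n-1}\circ\cdots\circ S_1$ with $v^{\otimes(n-1)}$ computes the class of $V^{n-1}(s)$ in $E_n/E_{n-1}$, so that the form becomes $(s,t)\longmapsto B\big(V^{n-1}(s),t\big)$ under Lemma~\ref{lem11} --- is exactly the step the paper leaves implicit (it is the intended content of Lemmas~\ref{lem12} and~\ref{lem22}), and your sign bookkeeping for both parities checks out.
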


\begin{proof}We shall first assume that $B$ is symmetric, hence an orthogonal form. In this case, from
Definition~\ref{parity}(1) we know that the integer $n$ in Definition~\ref{def1} is odd.
Since~$B$ is symmetric, and $n$ is odd, from \eqref{e8} in Lemma~\ref{lem22} it follows immediately that~${\mathbb S}'$ in~\eqref{e7}
is a section of \begin{gather*}\operatorname{Sym}^2(E^*_1)\otimes K^{\otimes(n-1)}_\Sigma \subset (E^*_1)^{\otimes 2}
\otimes K^{\otimes(n-1)}_\Sigma .\end{gather*}
Moreover, the $K^{\otimes(n-1)}_\Sigma$-valued bilinear form~${\mathbb S}'$ on $E_1$ is fiberwise
non-degenerate, because the homomorphism ${\mathbb S}'$ in~\eqref{e6} is an isomorphism.

Now assume that $B$ is a symplectic form. In this case, from Definition~\ref{parity}(1) we
know that the integer $n$ in Definition \ref{def1} is even.
Since $B$ is anti-symmetric, and $n$ is even, from \eqref{e8} it follows
that ${\mathbb S}'$ in \eqref{e7} is a section of \begin{gather*}\operatorname{Sym}^2(E^*_1)\otimes
K^{\otimes(n-1)}_\Sigma \subset (E^*_1)^{\otimes 2}\otimes K^{\otimes(n-1)}_\Sigma .\end{gather*}
Moreover, as in the previous case, the $K^{\otimes(n-1)}_\Sigma$-valued bilinear form ${\mathbb
S}'$ on $E_1$ is fiberwise non-degenerate, because the homomorphism ${\mathbb S}'$ in
\eqref{e6} is an isomorphism. \end{proof}

\section[Generalized $B$-opers and jet bundles]{Generalized $\boldsymbol{B}$-opers and jet bundles} \label{jets}

Generalized $B$-opers as introduced in Definition~\ref{def1} are closely related to {\it jet
bundles}, natural objects which have had different applications in geometry. In what follows,
we shall first recall the basic definitions and properties of jet bundles, and then study their
relation to generalized $B$-opers. The reader should refer, for instance, to~\cite{Bi3} for a~gentle introduction to the main ideas surrounding jet bundles which will come useful in the
present paper.

\subsection{Jet bundles}

We briefly recall the definition of a jet bundle of a holomorphic vector bundle on the Riemann surface $\Sigma$.
Consider the product $\Sigma\times\Sigma$, and let
\[p_i \colon \ \Sigma\times \Sigma \longrightarrow \Sigma , \qquad i = 1, 2\]
be the projection to the $i$-th factor. Let
\begin{gather}\label{delta}
\Delta := \{(x, x) \in \Sigma \,|\, x \in \Sigma\} \subset \Sigma\times \Sigma
\end{gather}
be the (reduced) diagonal divisor.

\begin{Definition}
Given a holomorphic vector bundle $W$ of rank $r_0$ on $\Sigma$ and any nonnegative
integer $m\geq 0$, the $m$th order {\em jet bundle} $J^m(W)$ of $W$ is the holomorphic
vector bundle of rank $(m + 1)r_0$ given by the direct image
\[
J^m(W) := p_{1*} \left(\frac{p^*_2W}{p^*_2W\otimes
{\mathcal O}_{\Sigma\times \Sigma}(-(m+1)\Delta)}\right)
 \longrightarrow \Sigma .
\]
\end{Definition}

The natural inclusion of ${\mathcal O}_{\Sigma\times \Sigma}(-(j+1)\Delta)$
in ${\mathcal O}_{\Sigma\times \Sigma}(-j\Delta)$ produces a surjective homomorphism
$J^{j}(W) \longrightarrow J^{j-1}(W)$.

\begin{Lemma}\label{kernel1}
For $j \geq 1$, the kernel of the above natural projection
\[J^{j}(W) \longrightarrow J^{j-1}(W)\]
 is the vector bundle $K^{\otimes j}_\Sigma\otimes W$.
\end{Lemma}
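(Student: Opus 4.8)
The plan is to identify the kernel of the projection $J^{j}(W)\longrightarrow J^{j-1}(W)$ by examining the short exact sequence of sheaves on $\Sigma\times\Sigma$ that gives rise to it, and then pushing forward along $p_1$. First I would write down the inclusion of ideal sheaves ${\mathcal O}_{\Sigma\times\Sigma}(-(j+1)\Delta)\hookrightarrow {\mathcal O}_{\Sigma\times\Sigma}(-j\Delta)$ and tensor with $p_2^*W$, producing the short exact sequence of sheaves supported on a neighborhood of $\Delta$
\[
0 \longrightarrow \frac{p_2^*W\otimes{\mathcal O}(-j\Delta)}{p_2^*W\otimes{\mathcal O}(-(j+1)\Delta)} \longrightarrow \frac{p_2^*W}{p_2^*W\otimes{\mathcal O}(-(j+1)\Delta)} \longrightarrow \frac{p_2^*W}{p_2^*W\otimes{\mathcal O}(-j\Delta)} \longrightarrow 0.
\]
Applying $p_{1*}$, the right two terms are by definition $J^{j}(W)$ and $J^{j-1}(W)$, so the kernel we seek is the direct image of the leftmost quotient sheaf, namely $p_{1*}\bigl(p_2^*W\otimes({\mathcal O}(-j\Delta)/{\mathcal O}(-(j+1)\Delta))\bigr)$.

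Next I would compute that leftmost quotient. The sheaf ${\mathcal O}(-j\Delta)/{\mathcal O}(-(j+1)\Delta)$ is supported set-theoretically on $\Delta$ and is the restriction to $\Delta$ of the line bundle ${\mathcal O}(-j\Delta)$, i.e. it is $N_\Delta^{*\otimes j}$ where $N_\Delta$ is the normal bundle of the diagonal (using that the conormal bundle of $\Delta$ is ${\mathcal O}(-\Delta)|_\Delta$). Under the canonical identification of $\Delta$ with $\Sigma$, the conormal bundle of the diagonal is exactly $K_\Sigma$, so this quotient is $K^{\otimes j}_\Sigma$ viewed as a sheaf on $\Delta\cong\Sigma$. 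Tensoring with $p_2^*W$ and restricting to $\Delta$ replaces $p_2^*W$ by $W$ under the same identification, so the leftmost quotient sheaf becomes $K^{\otimes j}_\Sigma\otimes W$ supported on $\Delta$.

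Finally I would observe that since $p_1$ restricted to $\Delta$ is an isomorphism onto $\Sigma$, the direct image $p_{1*}$ of a sheaf supported on $\Delta$ simply transports it to $\Sigma$ without change, so $p_{1*}(K^{\otimes j}_\Sigma\otimes W)=K^{\otimes j}_\Sigma\otimes W$. This identifies the kernel with $K^{\otimes j}_\Sigma\otimes W$, as claimed. The main obstacle is the second step: one must argue carefully that the conormal bundle of the diagonal in $\Sigma\times\Sigma$ is canonically $K_\Sigma$ and that tensoring $p_2^*W$ with a sheaf supported on $\Delta$ and restricting to $\Delta$ yields $W$ under the identification $\Delta\cong\Sigma$; this is where the geometry of the diagonal enters and where a local-coordinate check (expanding a section in its Taylor series along the diagonal, with the $j$th-order term transforming as a section of $K^{\otimes j}_\Sigma\otimes W$) makes the identification explicit and free of ambiguity.
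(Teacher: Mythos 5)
Your argument is correct, but it takes a genuinely different route from the paper's. You work globally on $\Sigma\times\Sigma$: tensoring the inclusion ${\mathcal O}_{\Sigma\times\Sigma}(-(j+1)\Delta)\hookrightarrow {\mathcal O}_{\Sigma\times\Sigma}(-j\Delta)$ with the locally free sheaf $p_2^*W$, identifying the subquotient ${\mathcal O}(-j\Delta)/{\mathcal O}(-(j+1)\Delta)$ with the $j$-th tensor power of the conormal bundle of the diagonal, which is canonically $K_\Sigma^{\otimes j}$ under $\Delta\cong\Sigma$ (via $I_\Delta/I_\Delta^2\cong \Omega^1_\Sigma$), and then pushing forward by $p_{1*}$. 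The paper instead runs the local algebraic computation of Eisenbud--Harris for bundles of principal parts: it reduces to $\operatorname{Spec}(R)$ with $R={\mathbb C}[z]$, where the ideal $I$ of the diagonal is principal, generated by $z\otimes 1-1\otimes z$, identifies $J^m(W)$ with $W/I^{m+1}W$, and checks by hand that $I^j/I^{j+1}\cong \Omega_{R/{\mathbb C}}^{\otimes j}$, generated by ${\rm d}z^{\otimes j}$. Your route buys a coordinate-free, canonical identification of the kernel (the same canonicity the paper exploits implicitly elsewhere when working near the diagonal, e.g., in Proposition~\ref{lem1}); the paper's route buys explicit generators and a short self-contained verification. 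One step you should make explicit in a final write-up: applying $p_{1*}$ to your short exact sequence of sheaves preserves exactness because all three sheaves are supported on the infinitesimal neighborhood $(j+1)\Delta$ of the diagonal, on which $p_1$ restricts to a finite morphism onto $\Sigma$, so the relevant $R^1p_{1*}$ vanishes; this is routine, but it is the one place where a global pushforward argument could in principle lose exactness, and as stated your plan passes over it silently.
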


\begin{proof}
Whilst this is something well known, we shall include here a short proof in order to
be somewhat self-contained. We
will show this algebraically, following the proof given by Eisenbud and Harris for bundles of
principal parts \cite[p.~247]{MR3617981}. Since $\Sigma$ can be viewed as a smooth
projective curve, it suffices to prove the claim on $\mbox{Spec}(R)$, where
$R = \C[z]$. The vector bundle~$W$ on $R = \Sigma$ corresponds to a $R$-module,
which we also denote by $W$. The pullback bundle $p^*_2W = S \otimes_R W = R \otimes_\C W$ is
just an $S$-module from changing base ring. Let~$I$ be the ideal sheaf of the diagonal
$\Delta$. Then~$I$ is generated by $z \otimes 1 - 1 \otimes z$, where $z$ is the generator
of $R$. In this set-up, we see the $R$-module associated to the jet bundle is
\begin{align*}
J^m(W) = {p_1}_* \left( \frac{R \otimes_\C W}{R \otimes_\C W \otimes_\C I^{m+1}} \right)
 = W/I^{m+1}W
\end{align*}
viewed as an $R$-module with multiplication given by $r \longmapsto r \otimes_\C 1$.
Then the kernel of the natural projection $J^j(W) \longrightarrow J^{j-1}(W)$ is
given by $\big(I^m/I^{m+1}\big) \otimes_\C W$.
It is easy to see that $I^j/I^{j+1}$ is generated by $z^m \otimes 1 - 1 \otimes z^m$ and
is isomorphic to $\mbox{Sym}^m(\Omega_{R/\C}) = \Omega_{R/\C}^{\otimes m}$ as $R$-modules, where
the latter is generated by ${\rm d}z^{\otimes m}$. Finally, the canonical bundle coincides with the
sheaf of differentials for a curve. Hence the short exact sequence
\begin{gather*}
0 \longrightarrow K_\Sigma^{\otimes m} \otimes W \longrightarrow J^m(W)
 \longrightarrow J^{m-1}(W) \longrightarrow 0
\end{gather*}
is obtained.
\end{proof}

The following two definitions will be of much use here, and the reader should refer for
instance to \cite{Bi3} and references therein for further details on them.

\begin{Definition}Given holomorphic vector bundles $W$, $W'$ on $\Sigma$, we let
$\operatorname{Diff}^m_\Sigma(W, W') $ be the holomorphic vector bundle on~$\Sigma$ given by the
{\em sheaf of holomorphic differential operators} of order~$m$ from~$W$ to~$W'$, that is,
\begin{gather}\label{do}
\operatorname{Diff}^m_\Sigma(W, W') := \operatorname{Hom}\big(J^m(W), W'\big) \longrightarrow \Sigma .
\end{gather}
\end{Definition}

\begin{Definition}\label{symbol}The {\em symbol map} $\sigma\colon \operatorname{Diff}^m_\Sigma(W, W') \longrightarrow (T\Sigma)^{\otimes m}\otimes \operatorname{Hom}(W, W')$
is the composition of homomorphisms
\begin{gather*}
\operatorname{Diff}^m_\Sigma(W, W') = J^m(W)^*\otimes W' \longrightarrow
\big(K_\Sigma^{\otimes m}\otimes W\big)^*\otimes W' =(T\Sigma)^{\otimes m}\otimes \operatorname{Hom}(W, W'),
\end{gather*}
where the homomorphism is $J^m(W)^*\otimes W' \longrightarrow
\big(K_\Sigma^{\otimes m}\otimes W\big)^*\otimes W'$ is obtained from the inclusion
$K_\Sigma^{\otimes m}\otimes W \hookrightarrow J^m(W)$ in Lemma~\ref{kernel1}.
\end{Definition}

In what follows we shall construct an ${\mathcal O}_\Sigma$-linear homomorphism from the
holomorphic vector bundle $E$ of a generalized $B$-oper $(E, {\mathcal F}, D)$ to the jet
bundle $J^k(Q)$, where $k$ is any nonnegative integer and $Q = E/E_{n-1}$; this
will be done using the connection $D$.

Take a generalized $B$-oper $(E, {\mathcal F}, D)$.
Consider a point $x \in \Sigma$, and take a point in the corresponding fiber $v \in E_x$. We
shall denote by $\widetilde v$ the unique locally defined flat section of $E$ for the flat
connection $D$, for which ${\widetilde v}(x) = v$ (note that $\widetilde v$ is defined on any simply connected
open neighborhood of $x$). Consider the holomorphic section
\begin{gather*}q(\widetilde v) \colon \ \Sigma \longrightarrow Q := E/E_{n-1}\end{gather*}
of $Q$ defined around $x$, where $q\colon E \longrightarrow E/E_{n-1} = Q$ is the natural
quotient map (see \eqref{e4}). Now restricting $q(\widetilde v)$ to the $k$-th order
infinitesimal neighborhood of $x$ we get an element $q(\widetilde v)_k \in J^k(Q)_x$, and we
shall denote by
\begin{align}
f_k\colon \ E &\longrightarrow J^k(Q) , \nonumber \\
v &\longmapsto q(\widetilde v)_k,\label{e10}
\end{align}
the ${\mathcal O}_\Sigma$-linear homomorphism constructed this way.

From the construction of the homomorphisms $f_k$ in \eqref{e10} it follows immediately that the
following diagram of homomorphisms is commutative:
\[
\begin{matrix}
E& \stackrel{f_k}{\longrightarrow} & J^k(Q)\\
\big\Vert && \Big\downarrow\\
E& \stackrel{f_{k-j}}{\longrightarrow} & J^{k-j}(Q)
\end{matrix}
\]
for all $0 < j \leq k$, where the projection $J^k(Q) \longrightarrow
J^{k-j}(Q)$ is the composition of iterations of the projection in Lemma \ref{kernel1}.

The holomorphic vector
bundle $J^k(Q)$ has an increasing filtration of holomorphic subbundles of length $k+2$
\begin{gather}\label{e11}
0 = {\mathcal Q}_0 \subset {\mathcal Q}_1 \subset \cdots \subset
{\mathcal Q}_k \subset {\mathcal Q}_{k+1} := J^k(Q) ,
\end{gather}
where ${\mathcal Q}_i$, for every $0 \leq i \leq k$, is
the kernel of the natural projection $J^k(Q) \longrightarrow J^{k-i}(Q)$
obtained by iterating the projection in Lemma \ref{kernel1}.

The following result from \cite{Bi3} will be of much use.

\begin{Theorem}[{\cite[Theorem~4.2]{Bi3}}]\label{thm1}
The homomorphism $f_{n-1}$ in \eqref{e10} is an isomorphism. This isomorphism
$f_{n-1}$ takes the filtration $\mathcal F$ in \eqref{e4} to the filtration of
$J^{n-1}(Q)$ constructed in \eqref{e11}.
\end{Theorem}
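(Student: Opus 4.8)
The plan is to prove that $f_{n-1}$ is an isomorphism by exhibiting it as a filtered homomorphism that induces isomorphisms on all associated graded pieces. Comparing ranks, note that $Q = E/E_{n-1}$ has rank $r$, so $J^{n-1}(Q)$ has rank $nr = \operatorname{rank}(E)$, and the subbundle ${\mathcal Q}_i$ in \eqref{e11} has rank $ir = \operatorname{rank}(E_i)$. This suggests the precise filtration correspondence $f_{n-1}(E_i) = {\mathcal Q}_i$: once I establish the inclusion $f_{n-1}(E_i)\subseteq {\mathcal Q}_i$ together with the graded isomorphism property, equality of ranks upgrades the inclusions to equalities and yields both assertions of the theorem at once. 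Since the statement is local, I would fix $x\in\Sigma$, a coordinate $z$ centred at $x$, and a holomorphic frame of $E$ adapted to the filtration (so that $E_i$ is spanned by the first $ir$ frame vectors and $q$ reads off the top block of size $r$). Writing the flat section as $\widetilde v(z) = P(z)v$ with $P(0) = \mathrm{Id}$ and $P' = -AP$, where $A$ is the connection matrix of $D$, the map in \eqref{e10} becomes $f_k(v) = q(\widetilde v)_k$, the $k$-jet at $z=0$ of the top block $\widetilde v_n(z) = (P(z)v)_n$.

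For the filtration preservation I would use the commuting diagram following \eqref{e10}, which gives $f_{n-1}(v)\in{\mathcal Q}_i \iff f_{n-1-i}(v)=0 \iff q(\widetilde v)$ vanishes to order $\ge n-i$ at $x$. Griffiths transversality $D(E_b)\subset E_{b+1}\otimes K_\Sigma$ (Definition~\ref{def1}(1)) forces the $(a,b)$ block of $A$ to vanish whenever $a>b+1$; that is, $A$ has no blocks more than one step below the diagonal, and its subdiagonal blocks are exactly the second fundamental forms $S_j(D)$ of \eqref{e5}. I would then prove the elementary estimate that the Taylor blocks of the fundamental solution satisfy $(P^{(m)})_{ab}=0$ whenever $a-b>m$, by induction on $m$ using $(m+1)P^{(m+1)} = -\sum_k A^{(k)}P^{(m-k)}$: a nonzero contribution needs $c\ge a-1$ (from $A^{(k)}_{ac}$) and $c-b\le m-k$ (from the inductive hypothesis on $(P^{(m-k)})_{cb}$), forcing $a-b\le m+1$. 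Hence $\operatorname{ord}_0 P_{ab}\ge a-b$, so for $v\in E_i$ (i.e.\ $v_b=0$ for $b>i$) the top block $\widetilde v_n=\sum_{b\le i}P_{nb}v_b$ vanishes to order $\ge \min_{b\le i}(n-b) = n-i$. This is exactly $f_{n-1}(E_i)\subseteq{\mathcal Q}_i$.

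For the induced graded map $E_i/E_{i-1}\to {\mathcal Q}_i/{\mathcal Q}_{i-1}\cong K^{\otimes(n-i)}_\Sigma\otimes Q$ (the last isomorphism by Lemma~\ref{kernel1}), which records the order-$(n-i)$ coefficient of $\widetilde v_n$, I would observe that in the extremal case $a-b=m$ the inequalities above are tight at every step, so the only contributions to $(P^{(n-i)})_{ni}$ come from chains that move up exactly one block at a time through the subdiagonal blocks. Thus $(P^{(n-i)})_{ni}$ equals, up to the constant $\pm 1/(n-i)!$, the product $S_{n-1}(D)\cdots S_i(D)$ of second fundamental forms at $z=0$. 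Since each $S_j(D)$ is an isomorphism by the non-degeneracy condition Definition~\ref{def1}(2), this product is an isomorphism, so the graded map $\bar f_i$ (the analogue for each $i$ of the composite ${\mathbb S}'$ in \eqref{e6}) is an isomorphism. A filtered homomorphism of vector bundles of the same rank that is an isomorphism on every graded quotient is an isomorphism; applying this to $f_{n-1}$ finishes the proof and, by the matching ranks, gives $f_{n-1}(E_i)={\mathcal Q}_i$.

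I expect the main obstacle to be the bookkeeping in the two index estimates: proving the vanishing $(P^{(m)})_{ab}=0$ for $a-b>m$ cleanly, and then extracting from the tight (extremal) case the precise identification of the leading coefficient with the product of the $S_j(D)$. Everything else — the rank comparison, the reduction of filtration preservation to an order-of-vanishing statement, and the ``graded isomorphism implies isomorphism'' principle — is routine once this combinatorial core, governed entirely by Griffiths transversality and the oper non-degeneracy, is in place.
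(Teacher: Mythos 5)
Your proposal is correct, and it is worth noting that the paper itself gives no proof of this statement: Theorem~\ref{thm1} is imported verbatim from \cite[Theorem~4.2]{Bi3}. Your argument reproduces the skeleton of the proof there~-- show $f_{n-1}(E_i)\subseteq {\mathcal Q}_i$, identify the induced graded maps $E_i/E_{i-1}\to {\mathcal Q}_i/{\mathcal Q}_{i-1}\cong Q\otimes K^{\otimes(n-i)}_\Sigma$ with compositions of the second fundamental forms $S_j(D)$ up to a nonzero constant, and conclude by the rank count together with the principle that a filtered map inducing isomorphisms on all graded quotients is an isomorphism~-- but you implement the combinatorial core differently: where \cite{Bi3} argues intrinsically with jets and the connection, you trivialize locally, write the flat section as $\widetilde v = P(z)v$ with $P'=-AP$, and run the block-Taylor estimate $\big(P_{[m]}\big)_{ab}=0$ for $a-b>m$, with the extremal case $a-b=m$ picking out exactly the chains of subdiagonal blocks $A_{b+1,b}$, i.e.\ the $S_j(D)$. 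All the individual steps check out: Griffiths transversality (Definition~\ref{def1}(1)) does force $A_{ab}=0$ for $a>b+1$; the induction on the recursion for Taylor coefficients is sound (note your identity $(m+1)P_{[m+1]}=-\sum_k A_{[k]}P_{[m-k]}$ is the Taylor-coefficient form, consistent with your ``Taylor blocks'' reading~-- the derivative form would carry binomial coefficients, though the estimate is insensitive to this); the constant $(-1)^{n-i}/(n-i)!$ is what the recursion yields; and the equivalence $f_{n-1}(v)\in{\mathcal Q}_i \iff q(\widetilde v)$ vanishes to order $\geq n-i$ correctly matches the filtration \eqref{e11} via Lemma~\ref{kernel1}. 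Your route buys self-containedness and explicit leading coefficients at the cost of coordinate bookkeeping, and it correctly uses only the transversality and non-degeneracy conditions~-- the bilinear form $B$ plays no role, exactly as one expects for a statement borrowed from the $B$-free setting of \cite{Bi3}.
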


\begin{Remark}
It should be clarified that the isomorphism $f_{n-1}$ in \eqref{e10} depends on the
connection $D$. More precisely, given $(E, {\mathcal F})$, if $D'$ and $D''$ are
two $B$-connections such that $(E, {\mathcal F}, D')$ and $(E, {\mathcal F}, D'')$
are generalized $B$-opers, then the two isomorphisms
\[
f'_{n-1} \colon \ E \longrightarrow J^k(Q) \qquad \text{and}\qquad f''_{n-1} \colon \ E \longrightarrow J^k(Q)
\]
corresponding to $D'$ and $D''$ respectively do not in general coincide.
\end{Remark}

The construction of the homomorphism $f_k$ in \eqref{e10} gives the following general result.

\begin{Proposition}\label{prop-j}Let $V$ and $W$ be holomorphic vector bundles on $\Sigma$, and let $D_W$ be a holomorphic connection on~$W$. Then for any integer $n \geq 1$, there is a natural holomorphic isomorphism
\begin{gather*}
\varphi \colon \ J^n(V)\otimes W \longrightarrow J^n(V\otimes W) .
\end{gather*}
\end{Proposition}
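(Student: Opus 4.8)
The plan is to build $\varphi$ directly out of the connection $D_W$ and then to verify it is an isomorphism by comparing the two natural filtrations coming from Lemma~\ref{kernel1}. First I would describe the map fiberwise. Fix $x \in \Sigma$ and recall that, since a holomorphic connection on the curve $\Sigma$ is automatically flat, every $w \in W_x$ admits a unique flat germ $\widetilde w$ of a section of $W$ near $x$ with $\widetilde w(x) = w$; only the $n$-jet of $\widetilde w$ is needed, and it is determined to all orders by iterating $D_W$, so no global monodromy issue arises. Given a jet $j \in J^n(V)_x$, choose any local holomorphic section $s$ of $V$ representing $j$ and set
\[
\varphi(j \otimes w) := \big[\,\text{$n$-jet at $x$ of } s \otimes \widetilde w\,\big] \in J^n(V \otimes W)_x .
\]
Extending bilinearly gives a $\C$-linear map $J^n(V)_x \otimes W_x \to J^n(V \otimes W)_x$.

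Then I would check that $\varphi$ is a well-defined holomorphic bundle morphism. Independence of the representative $s$ follows because two lifts of $j$ differ by a section vanishing to order $\geq n+1$ at $x$, and tensoring with the holomorphic $\widetilde w$ preserves this vanishing, so the resulting $n$-jets agree. Holomorphic dependence on $x$ holds because $\widetilde w$ depends holomorphically on its initial value and on the base point, being the solution of a linear holomorphic system; hence $\varphi$ is a morphism of holomorphic vector bundles, and being fiberwise $\C$-linear it is automatically $\mathcal{O}_\Sigma$-linear. Equivalently, in a local $D_W$-flat frame of $W$ the bundle $V \otimes W$ is a direct sum of copies of $V$ and $\varphi$ is the induced identification of jet bundles; the flat-extension description above makes the independence of the frame manifest, so $\varphi$ is natural in $(V, W, D_W)$.

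To prove $\varphi$ is an isomorphism I would use the filtration of Lemma~\ref{kernel1}. Both bundles carry length-$(n+2)$ filtrations: on the source take $\mathcal{Q}_i \otimes W$, with $\mathcal{Q}_i \subset J^n(V)$ the kernel of $J^n(V) \to J^{n-i}(V)$ as in \eqref{e11}, and on the target take the analogous $\mathcal{Q}'_i = \ker(J^n(V\otimes W) \to J^{n-i}(V \otimes W))$. A jet lies in $\mathcal{Q}_i$ exactly when its representative $s$ vanishes to order $\geq n-i+1$ at $x$; since $s \otimes \widetilde w$ then vanishes to the same order, $\varphi$ carries $\mathcal{Q}_i \otimes W$ into $\mathcal{Q}'_i$, so $\varphi$ is filtered. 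By Lemma~\ref{kernel1} the associated graded of each side is $\bigoplus_{j=0}^n K_\Sigma^{\otimes j} \otimes V \otimes W$, the $i$-th graded piece being $K_\Sigma^{\otimes(n-i+1)} \otimes V \otimes W$. On this piece the induced map is read off from the leading Taylor coefficient: writing $s = c\,z^{n-i+1} + \cdots$ and $\widetilde w = w + O(z)$ in a local coordinate and local frames, the leading coefficient of $s \otimes \widetilde w$ is $c \otimes w$, so $\mathrm{gr}_i(\varphi)$ is the identity.

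The main obstacle is the well-definedness and naturality of the construction, namely recognizing that $D_W$ is precisely the datum needed to transport $W$ through the jet functor canonically, and checking that the formal flat extension $\widetilde w$ is unambiguous to order $n$. Once $\varphi$ is seen to be a filtered morphism inducing the identity on every associated graded quotient, a straightforward induction on $i$ along the short exact sequences of Lemma~\ref{kernel1} (equivalently, the five lemma applied at each step) shows that $\varphi$ is an isomorphism, completing the proof.
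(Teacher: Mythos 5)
Your construction of the map is the same as the paper's: the paper defines a homomorphism $\eta \colon W \to J^n(W)$ by sending $w$ to the $n$-jet of its unique $D_W$-flat extension and then composes ${\rm Id}\otimes\eta$ with the natural map $J^n(V)\otimes J^n(W) \to J^n(V\otimes W)$, which is precisely your $j\otimes w \mapsto \big[\text{$n$-jet of } s\otimes\widetilde w\big]$. Where you genuinely diverge is the verification that $\varphi$ is an isomorphism. The paper states that fiberwise injectivity is ``straightforward to check'' and concludes from the equality $\operatorname{rank}(J^n(V)\otimes W) = \operatorname{rank}(J^n(V\otimes W))$; you instead show $\varphi$ carries the filtration $\mathcal{Q}_i\otimes W$ into $\mathcal{Q}'_i$, compute via the leading Taylor coefficient that the induced map on each graded piece $K_\Sigma^{\otimes(n-i+1)}\otimes V\otimes W$ from Lemma~\ref{kernel1} is the identity, and conclude by induction along the short exact sequences. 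Your route is slightly longer but buys two things: it replaces the paper's unproved injectivity claim with an explicit computation, and it records the additional fact that $\varphi$ respects the canonical jet filtrations on both sides, which is exactly the kind of compatibility the paper needs elsewhere (compare Theorem~\ref{thm1}). Note also that your parenthetical remark --- that in a $D_W$-flat local frame $\varphi$ becomes the obvious identification $J^n\big(K^{(1-n)/2}_\Sigma\big)^{\oplus r}$-style direct-sum map --- is the same trivialization device the paper itself deploys in the proof of Lemma~\ref{lem3}, so that one-line argument alone would also have sufficed; both of your arguments are correct, and the indexing of the graded pieces (order of vanishing $\geq n-i+1$ for $\mathcal{Q}_i$) checks out.
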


\begin{proof}
Take any point $x \in \Sigma$ and a vector $w \in W_x$ in the fiber of $W$ over $x$.
Let $s_w$ denote the unique holomorphic section of $W$, defined on a simply connected open neighborhood
of $x \in \Sigma$, satisfying the following two conditions:
\begin{itemize}\itemsep=0pt
\item $s_w(x) = w$, and

\item $s_w$ is flat with respect to the integrable connection $D_W$ on $W$.
\end{itemize}
Let $\widetilde{s}_w \in J^n(W)_x$ be the element obtained by restricting $s_w$ to the $n$-th
order infinitesimal neighborhood of $x$. Consequently, we get a holomorphic homomorphism
\begin{gather}\label{ej1}
\eta \colon \ W \longrightarrow J^n(W)
\end{gather}
that for any $x \in \Sigma$, sends any $w \in W_x$ to $\widetilde{s}_w \in J^n(W)_x$ constructed as above.

Given any two holomorphic vector bundles $V_1$ and $V_2$, there is a natural holomorphic homomorphism
\begin{gather*}
J^n(V_1)\otimes J^n(V_2) \longrightarrow J^n(V_1\otimes V_2) .
\end{gather*}
Now consider the composition of homomorphisms
\begin{gather*}
J^n(V)\otimes W \stackrel{{\rm Id}\otimes \eta}{\longrightarrow} J^n(V)\otimes J^n(W)
\longrightarrow J^n(V\otimes W) ,
\end{gather*}
where $\eta$ is the homomorphism in \eqref{ej1}. It is straightforward to check that this composition
of homomorphisms is fiberwise injective. This implies that it is an isomorphism, because
$\operatorname{rank}(J^n(V)\otimes W) = \operatorname{rank}(J^n(V\otimes W))$.
\end{proof}

The following is a special case of Proposition \ref{prop-j}.

\begin{Corollary}\label{cor-j}
Let $V$ be a holomorphic vector bundle on $\Sigma$, and ${\mathcal L}$ be a holomorphic
line bundle on $\Sigma$ of order $r$. Then there is a canonical holomorphic isomorphism
\begin{gather*}
J^n(V)\otimes{\mathcal L} \stackrel{\sim}{\longrightarrow} J^n(V\otimes{\mathcal L})
\end{gather*}
for every $j \geq 1$.
\end{Corollary}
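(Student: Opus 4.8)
The plan is to derive the Corollary directly from Proposition~\ref{prop-j} by exhibiting a canonical holomorphic connection on the line bundle ${\mathcal L}$, which is exactly the extra datum $D_W$ required there. In this way the entire argument reduces to a single assertion: a holomorphic line bundle of finite order admits a (canonical, flat) holomorphic connection. Once this is in place, we simply specialize Proposition~\ref{prop-j} to $W = {\mathcal L}$.

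First I would construct the connection. Since ${\mathcal L}$ has order $r$, there is a holomorphic isomorphism ${\mathcal L}^{\otimes r} \cong {\mathcal O}_\Sigma$; fix the nowhere-vanishing holomorphic section $s_0$ of ${\mathcal L}^{\otimes r}$ corresponding to the constant function $1$. The trivial bundle ${\mathcal O}_\Sigma$ carries its tautological holomorphic connection $\mathrm d$, for which $s_0$ is flat, and I would define $D_{\mathcal L}$ to be the unique holomorphic connection on ${\mathcal L}$ whose induced connection on ${\mathcal L}^{\otimes r}$ equals $\mathrm d$. Concretely, in a local holomorphic frame $e$ of ${\mathcal L}$, writing $s_0 = f\, e^{\otimes r}$ with $f$ nowhere vanishing, one sets $D_{\mathcal L}(e) = -\tfrac1r (\mathrm d\log f)\otimes e$. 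A direct computation gives that the connection induced on ${\mathcal L}^{\otimes r}$ annihilates $s_0$, hence coincides with $\mathrm d$. Moreover, under a change of frame $e \mapsto g\, e$ the section $s_0$ is unchanged, so $f \mapsto f/g^{r}$ and the local form transforms as $-\tfrac1r \mathrm d\log f \mapsto -\tfrac1r \mathrm d\log f + \mathrm d\log g$; this is precisely the transformation rule for a connection $1$-form, so the local forms patch to a global holomorphic connection on ${\mathcal L}$. (Alternatively, one may invoke the Atiyah--Weil criterion: a finite-order line bundle has degree $0$ and therefore admits a holomorphic connection.)

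With $D_{\mathcal L}$ in hand, I would then apply Proposition~\ref{prop-j} verbatim with $W = {\mathcal L}$ and $D_W = D_{\mathcal L}$, obtaining the desired isomorphism $J^n(V)\otimes{\mathcal L} \stackrel{\sim}{\longrightarrow} J^n(V\otimes{\mathcal L})$. Because $D_{\mathcal L}$ is intrinsically attached to ${\mathcal L}$---it is the unique connection whose $r$-th tensor power is the trivial connection, equivalently the flat connection with finite-order unitary monodromy---the resulting isomorphism is canonical, as asserted in the statement.

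I expect the only genuine content, and hence the main obstacle, to be the existence (and canonicity) of the holomorphic connection on ${\mathcal L}$; this is exactly the step where the finite-order hypothesis is used, since it is what distinguishes ${\mathcal L}$ from a general line bundle and supplies a canonical flat connection rather than merely some connection. Everything past that point is a direct appeal to Proposition~\ref{prop-j}, whose proof already supplies the naturality of $\varphi$ as well as the rank count $\operatorname{rank}(J^n(V)\otimes{\mathcal L}) = \operatorname{rank}(J^n(V\otimes{\mathcal L}))$ that promotes fiberwise injectivity to an isomorphism.
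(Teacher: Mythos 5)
Your proposal is correct and follows essentially the same route as the paper: equip ${\mathcal L}$ with the unique holomorphic connection $D_{\mathcal L}$ whose induced connection on ${\mathcal L}^{\otimes r}$ matches the trivial connection under a trivialization (the paper notes, as your uniqueness claim implicitly does, that this is independent of the trivialization since any two differ by a nonzero scalar), and then apply Proposition~\ref{prop-j} with $W = {\mathcal L}$. Your explicit local formula $D_{\mathcal L}(e) = -\tfrac1r (\mathrm d\log f)\otimes e$ is a harmless addition the paper omits.
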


\begin{proof}Take a holomorphic isomorphism
\begin{gather*}
\xi \colon \ {\mathcal O}_\Sigma \longrightarrow {\mathcal L}^{\otimes r} .
\end{gather*}
There is a unique holomorphic connection $D_{\mathcal L}$ on ${\mathcal L}$ such that the isomorphism
$\xi$ takes the trivial connection on ${\mathcal O}_\Sigma$, given by the de Rham differential~${\rm d}$,
to the connection on ${\mathcal L}^{\otimes r}$ induced by~$D_{\mathcal L}$. This connection~$D_{\mathcal L}$ does not depend on the choice of the isomorphism $\xi$, because any two choices of $\xi$ differ by an automorphism of~${\mathcal O}_\Sigma$ given by a nonzero scalar multiplication. Note that a nonzero scalar multiplication preserves the trivial connection on~${\mathcal O}_\Sigma$.

Since ${\mathcal L}$ is equipped with a canonical connection~$D_{\mathcal L}$, the required isomorphism
\begin{gather*}
J^n(V)\otimes{\mathcal L} \stackrel{\sim}{\longrightarrow} J^n(V\otimes{\mathcal L})
\end{gather*}
is given by Proposition~\ref{prop-j}.
\end{proof}

\subsection{A natural symmetric form}\label{spin}

Let $g$ be the genus of $\Sigma$. Recall that a theta characteristic on $\Sigma$ (or spin structure) is a holomorphic line bundle $\xi$ on $\Sigma$ of degree $g-1$ equipped with a holomorphic isomorphism of~$\xi^{\otimes 2}$ with~$K_\Sigma$. Fix a theta characteristic on $\Sigma$, and denote it by $K^{1/2}_\Sigma$. For any integer~$m$, the holomorphic line bundle $\big(K^{1/2}_\Sigma\big)^{\otimes m}$ will be denoted by~$K^{m/2}_\Sigma$.

Take a generalized $B$-oper $(E, {\mathcal F}, D)$ on $\Sigma$. For ease of notation we define the holomorphic vector bundle
\begin{gather}\label{CQ}
\CQ := Q\otimes K_\Sigma^{(n-1)/2} = (E/E_{n-1})\otimes K_\Sigma^{(n-1)/2} .
\end{gather}
Having studied the natural $K^{\otimes(n-1)}_\Sigma$-valued symmetric form
${\mathbb S}'$ on $E_1$ in Section \ref{sym11}, in what follows we shall construct an equivalent symmetric form on $\CQ$.

Using the isomorphism $E^*_1 = Q$
from Lemma \ref{lem11}, the section ${\mathbb S}'$ in \eqref{e7} produces a section
\begin{gather}\label{e13}
{\mathbb S} \in H^0\big(\Sigma, Q^{\otimes 2}\otimes K^{\otimes(n-1)}_\Sigma\big) = H^0\big(\Sigma, \CQ^{\otimes 2}\big) .
\end{gather}
In view of Proposition \ref{prop1}, the section $\mathbb S$ satisfies the following:

\begin{Proposition}\label{cor1}The section ${\mathbb S}$ in \eqref{e13} lies in the subspace
\[
H^0\big(\Sigma, \operatorname{Sym}^2(\CQ)\big) \subset
H^0\big(\Sigma, \CQ^{\otimes 2}\big) .
\]
Moreover, the symmetric bilinear form on $\CQ^*$ defined by $\mathbb S$ is fiberwise non-degenerate.
\end{Proposition}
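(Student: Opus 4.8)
The plan is to deduce this directly from Proposition~\ref{prop1} by tracking how the section $\mathbb{S}$ is assembled from $\mathbb{S}'$. The only genuine content is to verify that the two properties already established for $\mathbb{S}'$ --- lying in the symmetric part, and fiberwise non-degeneracy --- are preserved under the identifications used to pass from $\mathbb{S}'$ to $\mathbb{S}$ in \eqref{e13}.

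First I would record the elementary compatibility of $\operatorname{Sym}^2$ with a line bundle twist: for any holomorphic vector bundle $V$ and line bundle $L$ on $\Sigma$ one has $\operatorname{Sym}^2(V\otimes L) = \operatorname{Sym}^2(V)\otimes L^{\otimes 2}$. Applying this with $V = Q$ and $L = K_\Sigma^{(n-1)/2}$ yields $\operatorname{Sym}^2(\CQ) = \operatorname{Sym}^2(Q)\otimes K^{\otimes(n-1)}_\Sigma$, compatibly with the decomposition $\CQ^{\otimes 2} = Q^{\otimes 2}\otimes K^{\otimes(n-1)}_\Sigma$ that underlies the equality in \eqref{e13}.

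Next I would invoke the isomorphism $E^*_1 \cong Q$ of Lemma~\ref{lem11}, which induces $\operatorname{Sym}^2(E^*_1) \cong \operatorname{Sym}^2(Q)$ and is precisely the identification by which $\mathbb{S}'$ is reinterpreted as $\mathbb{S}$. By Proposition~\ref{prop1} the section $\mathbb{S}'$ lies in $H^0\big(\Sigma, \operatorname{Sym}^2(E^*_1)\otimes K^{\otimes(n-1)}_\Sigma\big)$; chaining the two identifications above therefore places $\mathbb{S}$ in $H^0(\Sigma, \operatorname{Sym}^2(\CQ))$, which is the first assertion.

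For non-degeneracy I would observe that, under the duality $\CQ^* = E_1\otimes K_\Sigma^{-(n-1)/2}$ coming from $Q = E^*_1$, the symmetric form on $\CQ^*$ defined by $\mathbb{S}$ is nothing but the isomorphism $\mathbb{S}'\colon E_1 \longrightarrow E^*_1\otimes K^{\otimes(n-1)}_\Sigma$ of \eqref{e6} twisted by $K_\Sigma^{-(n-1)/2}$. Since twisting by a line bundle preserves the property of being a fiberwise isomorphism, and $\mathbb{S}'$ is fiberwise non-degenerate by Proposition~\ref{prop1}, the form on $\CQ^*$ is fiberwise non-degenerate as well. There is no substantive obstacle here beyond bookkeeping; the one point requiring care is ensuring that both symmetry and non-degeneracy transfer cleanly through the line bundle twist and the duality of Lemma~\ref{lem11}, which the two displayed identities above render transparent.
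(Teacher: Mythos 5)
Your proposal is correct and follows exactly the paper's route: the paper's own proof is the one-line observation that the statement follows from Proposition~\ref{prop1} together with Lemma~\ref{lem11}, and your argument simply makes explicit the bookkeeping (the twist identity $\operatorname{Sym}^2(Q\otimes K_\Sigma^{(n-1)/2}) = \operatorname{Sym}^2(Q)\otimes K^{\otimes(n-1)}_\Sigma$ and the transfer of symmetry and non-degeneracy through the identification $E^*_1 \cong Q$) that the authors leave implicit. No gap; your version is just a more detailed write-up of the same deduction.
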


\begin{proof}
This follows immediately from Proposition \ref{prop1} and Lemma \ref{lem11}.
\end{proof}

Since the symmetric bilinear form $\mathbb S$ in \eqref{e13} is fiberwise non-degenerate, it produces
a symmetric bilinear form
\[
{\mathbb S}^\vee \in H^0\big(\Sigma, (\CQ^*)^{\otimes 2}\big)
\]
on $\CQ$. In view of \eqref{CQ}, this ${\mathbb S}^\vee$ defines a homomorphism
\begin{gather}\label{e13b}
{\mathbb S}^\vee_0 \colon \ Q\otimes Q\otimes K^{\otimes (n-1)}_\Sigma \longrightarrow {\mathcal O}_\Sigma .
\end{gather}

\begin{Proposition}\label{lem1} A generalized $B$-oper $(E, {\mathcal F}, D)$ produces a holomorphic connection on the holomorphic vector bundle~$\CQ$. For the holomorphic connection on~$\operatorname{Sym}^2(\CQ)$ induced by this holomorphic connection on~$\CQ$, the section ${\mathbb S}$ in Proposition~{\rm \ref{cor1}} is flat.
\end{Proposition}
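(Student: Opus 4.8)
The plan is to first manufacture the connection on $\CQ$ out of the flat $B$-connection $D$, and then to read off the flatness of ${\mathbb S}$ from the defining identity \eqref{bc} of a $B$-connection. For the construction I would exploit the isomorphism $f_{n-1}\colon E \xrightarrow{\sim} J^{n-1}(Q)$ of Theorem~\ref{thm1}: transporting $D$ along $f_{n-1}$ yields a flat holomorphic connection $\nabla$ on the jet bundle $J^{n-1}(Q)$ for which, by that same theorem, the canonical jet filtration \eqref{e11} is an oper filtration, i.e., $\nabla$ shifts it by one with invertible symbols. This is precisely the situation treated in \cite{Bi3}, and the construction there produces a holomorphic connection on the top symbol $Q\otimes K_\Sigma^{(n-1)/2} = \CQ$; I would adopt that construction, the $K_\Sigma^{(n-1)/2}$-twist being exactly the shift needed for the symbol data of $\nabla$ to assemble into an honest connection on $\CQ$ (rather than merely into a second fundamental form).

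The key steps I would carry out are: (i) transfer $D$ to $\nabla$ on $J^{n-1}(Q)$ and check that it is an oper connection for \eqref{e11}; (ii) extract from $\nabla$ the induced connection $\nabla_\CQ$ on $\CQ$, verifying that it is holomorphic and independent of the auxiliary choices (in particular of local trivialisations of $K_\Sigma^{1/2}$); and (iii) identify, via \eqref{CQ}, \eqref{e6} and \eqref{e13}, the resulting connection on $\operatorname{Sym}^2(\CQ)$ together with the section ${\mathbb S}$. An equivalent and more hands-on route is to use the operator $V$ of \eqref{mapaV}: since ${\mathbb S}'$ is the iterated composite of the second fundamental forms $S_i(D)$, the value ${\mathbb S}(s\otimes t)$ is, up to the canonical identifications of Lemma~\ref{lem11}, the pairing $B\big(V^{n-1}(s),t\big)$ appearing in Lemma~\ref{lem22}, so differentiating ${\mathbb S}$ will reduce to differentiating such pairings.

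Granting step (ii), the flatness of ${\mathbb S}$ is then the heart of the matter, and I expect it to follow formally from the fact that $B$ is $D$-parallel. Indeed, \eqref{bc} says exactly that the section $B\in H^0\big(\Sigma,\operatorname{Sym}^2 E^*\big)$ (or its skew analogue) is flat for the connection on $E^*\otimes E^*$ induced by $D$; equivalently, by Lemma~\ref{lem12}, the contraction identity $B(V(s),t)+B(s,V(t))=v(B(s,t))$ holds. Because every identification used to pass from $(E,D,B)$ to $(\CQ,\nabla_\CQ,{\mathbb S})$ intertwines the relevant connections, the $D$-parallelism of $B$ should transfer to $\nabla_\CQ$-parallelism of ${\mathbb S}$; concretely, I would differentiate the expression for ${\mathbb S}$ in terms of $V^{n-1}$ and collapse the resulting telescoping sum using Lemma~\ref{lem12}, exactly as in the alternating-sum computation of Lemma~\ref{lem22}, to arrive at $\nabla^{\operatorname{Sym}^2}({\mathbb S})=0$. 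The main obstacle is step (ii) together with this compatibility: one must track the $K_\Sigma^{(n-1)/2}$-twist carefully and confirm that the connection induced on $\operatorname{Sym}^2(\CQ)$ really is the one compatible with $D$, since only then does the $B$-connection identity \eqref{bc} translate into the vanishing of the derivative of ${\mathbb S}$.
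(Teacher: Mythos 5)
Your existence step is essentially the paper's: the connection $\nabla$ on $\CQ$ cannot be obtained by any naive quotient or transport of $D$ (none of $E_1$, $E_{n-1}$, $Q$ is $D$-invariant), and the paper, like you, simply invokes the construction of \cite[p.~27]{Bi3}, where the order-$n$ differential operator $\mathcal D$ of \eqref{dod} attached to the oper yields a section of $p^*_1\CQ\otimes p^*_2\CQ^*$ over the non-reduced divisor $2\Delta$ restricting to ${\rm Id}_{\CQ}$ on $\Delta$, hence a holomorphic connection. So far your proposal and the paper agree.

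The flatness step, however, has a genuine gap. Your claim that ``every identification used to pass from $(E,D,B)$ to $(\CQ,\nabla_{\CQ},{\mathbb S})$ intertwines the relevant connections'' is precisely what must be proved, and it is not formal: since $D$ preserves neither $E_1$ nor $E_{n-1}$, the bundles $E_1$, $Q$ and $\CQ$ carry no $D$-induced connections, so there is nothing for the maps $S_i(D)$ or Lemma~\ref{lem11} to intertwine. Your proposed hands-on computation --- differentiate ${\mathbb S}(s,t) = B\big(V^{n-1}(s),t\big)$ along $v$ and telescope via Lemma~\ref{lem12} --- reproduces exactly the proof of Lemma~\ref{lem22}, whose output is the (anti)symmetry of ${\mathbb S}'$ (already Proposition~\ref{prop1}), not covariant constancy; to even write down $\widehat{\nabla}_v{\mathbb S}$ you need a description of $\nabla$-flat sections of $\CQ$ in terms of $D$, which the operator $V$ of \eqref{mapaV} does not supply, since $V$ acts on sections of $E$, not of $\CQ$. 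The paper's proof supplies exactly this missing mechanism through the kernel formalism: the section $\kappa$ in \eqref{ka} over $(n+1)\Delta$ defining $\mathcal D$ both restricts, via the canonical trivialization of \cite[Theorem~2]{BR}, to the $2\Delta$-section defining $\nabla$, and satisfies the symmetry $\eta^*\kappa = \kappa$ of \eqref{eta} (equivalently the self-adjointness \eqref{dei} of $\mathcal D$ --- this is where the $B$-compatibility \eqref{bc} actually enters); the concluding lemma's identity $\widetilde{\nabla}_F(B) = \widetilde{\theta}^F - \eta^*\widetilde{\theta}^F$ then converts this symmetry into $\widehat{\nabla}({\mathbb S}) = 0$. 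Without an analogue of that identity --- or some other explicit bridge between $\nabla$ and $D$, such as the connection-preserving decomposition $E \cong \CQ\otimes\CJn$, which in the paper appears only later (Proposition~\ref{prop2}) and itself rests on Proposition~\ref{lem1}, so using it here would be circular --- your telescoping argument cannot close.
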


\begin{proof}Let $(E, {\mathcal F}, D)$ be a generalized $B$-oper on $\Sigma$. The triple
$(E, {\mathcal F}, D)$ produces a holomorphic differential operator ${\mathcal D}$ on
$\Sigma$ of order~$n$
\begin{gather}\label{dod}
{\mathcal D} \in H^0\big(\Sigma, \operatorname{Diff}^n_\Sigma\big(Q, Q\otimes K^n_\Sigma\big)\big)
\end{gather}
(see \cite[p.~18, equation~(4.6)]{Bi3}). As before, $\Delta \subset \Sigma\times\Sigma$ is the
divisor in~\eqref{delta}. Using $\mathcal D$ one can construct a holomorphic section of
$p^*_1\CQ\otimes p^*_2\CQ^*$ over the non-reduced divisor $2\Delta$ as in \cite[p.~27]{Bi3}
(note that in \cite{Bi3} this section is called $\kappa\vert_{2\Delta}\otimes s$). Moreover, as
shown in \cite[p.~27]{Bi3}, the restriction of this section to $\Delta \subset 2\Delta$
coincides with the identity map of $\CQ$. Therefore, this section of $p^*_1\CQ\otimes
p^*_2\CQ^*$ over $2\Delta$ defines a holomorphic connection on the holomorphic vector bundle
$\CQ$ \cite[p.~7]{Bi3}, \cite{De}; this connection on~$\CQ$ will be denoted by~$\nabla$.

Let $\widehat{\nabla}$ denote the holomorphic connection on $\operatorname{Sym}^2 (\CQ)$ induced by the
connection $\nabla$ on $\CQ$. To complete the proof of the proposition, we need to show that the section ${\mathbb S}$ of $\operatorname{Sym}^2(\CQ)$ in Proposition~\ref{cor1} is flat (covariant constant) with respect to this induced connection~$\widehat{\nabla}$. For that we need to recall the construction, as well as some properties, of the differential operator~$\mathcal D$ in~\eqref{dod}.

As before, $p_1$ and $p_2$ are the projections of $\Sigma\times\Sigma$ to the first and second factor respectively. The holomorphic differential operator $\mathcal D$ is given by a holomorphic section
\begin{gather*}
\kappa \in H^0\big((n+1)\Delta, p^*_1\big(K^{\otimes n}_\Sigma\otimes Q\big)\otimes
p^*_2(K_\Sigma\otimes Q^*) \otimes {\mathcal O}_{\Sigma\times\Sigma}((n+1)\Delta)\big)
\end{gather*}
over the nonreduced divisor $(n+1)\Delta$ \cite[p.~25, equation~(5.1)]{Bi3}. From \eqref{CQ} it follows immediately that
\begin{gather*}
p^*_1\big(K^{\otimes n}_\Sigma\otimes Q\big)\otimes
p^*_2(K_\Sigma\otimes Q^*) = p^*_1\big(K^{(n+1)/2}_\Sigma\otimes \CQ\big)\otimes
p^*_2\big(K^{(n+1)/2}_\Sigma\otimes \CQ^*\big) ,
\end{gather*}
and we conclude that
\begin{gather}\label{ka0}
\kappa \in H^0\big((n+1)\Delta, p^*_1\big(K^{(n+1)/2}_\Sigma\otimes \CQ\big)\otimes p^*_2
\big(K^{(n+1)/2}_\Sigma\otimes {\CQ}^*\big) \otimes {\mathcal O}_{\Sigma\times\Sigma}((n+1)\Delta)\big) .
\end{gather}

We note that the holomorphic line bundle $ p^*_1\big(K^{(n+1)/2}_\Sigma\big) \otimes \big(p^*_2 K^{(n+1)/2}_\Sigma\big)\otimes {\mathcal O}_{\Sigma\times\Sigma}((n+1)\Delta)$ has a canonical trivialization over $2\Delta$ \cite[p.~688, Theorem~2]{BR}. Since
\begin{gather*}
p^*_1\big(K^{(n+1)/2}_\Sigma\otimes \CQ\big)\otimes p^*_2
\big(K^{(n+1)/2}_\Sigma\otimes {\CQ}^*\big) \otimes {\mathcal O}_{\Sigma\times\Sigma}((n+1)\Delta)\\
\qquad{}
= (p^*_1\CQ)\otimes (p^*_2\CQ^*)\otimes p^*_1\big(K^{(n+1)/2}_\Sigma\big) \otimes \big(p^*_2 K^{(n+1)/2}_\Sigma\big)\otimes
{\mathcal O}_{\Sigma\times\Sigma}((n+1)\Delta) ,
\end{gather*}
using this trivialization of $ p^*_1\big(K^{(n+1)/2}_\Sigma\big) \otimes \big(p^*_2 K^{(n+1)/2}_\Sigma\big)\otimes
{\mathcal O}_{\Sigma\times\Sigma}((n+1)\Delta)$ over $2\Delta$, the section~$\kappa$ in~\eqref{ka0} produces a section of $(p^*_1\CQ)\otimes (p^*_2\CQ^*)$. This section of $(p^*_1\CQ)\otimes (p^*_2\CQ^*)$ over $2\Delta$ gives the holomorphic connection on~$\nabla$ on~$\CQ$.

On the other hand, $\CQ$ is identified with its dual~${\CQ}^*$ using the pairing~$\mathbb S$ in Proposition~\ref{cor1}. Consequently, we conclude that
\begin{gather}\label{ka}
\kappa \in H^0\big((n+1)\Delta, p^*_1\big(K^{(n+1)/2}_\Sigma\otimes \CQ\big)\otimes p^*_2
\big(K^{(n+1)/2}_\Sigma\otimes {\CQ}\big)\otimes {\mathcal O}_{\Sigma\times\Sigma}((n+1)\Delta)\big) .
\end{gather}
Now from the construction of $\kappa$ it follows that the section $\kappa$ is symmetric, meaning
\begin{gather}\label{eta}
\eta^*\kappa = \kappa ,
\end{gather}
where $\eta \colon \Sigma\times\Sigma \longrightarrow \Sigma\times\Sigma$ is the involution defined by $(x, y) \longmapsto (y, x)$; in~\eqref{eta}, the section $\kappa$ is considered as the section in~\eqref{ka}.

In view of $\eta$, the following lemma implies that the section~${\mathbb S}$ of $\operatorname{Sym}^2 (\CQ)$ is covariantly constant with respect to the connection $\widehat{\nabla}$.
\end{proof}

\begin{Lemma}\looseness=-1 Let $F$ be a holomorphic vector bundle over $\Sigma$ equipped with a holomorphic connec\-tion~$\nabla_F$. Let $\theta^F$ be the section of $(p^*_1 F)\otimes (p^*_2 F^*)$ over $2\Delta$ corresponding to the connection~$\nabla_F$.~Let
\begin{gather*}
B \in H^0\big(\Sigma, \operatorname{Sym}^2(F)\big)
\end{gather*}
be a nondegenerate symmetric bilinear form on~$F^*$. Then the following statements hold:
\begin{itemize}\itemsep=0pt
\item Using $B$, the section $\theta^F$ produces a section $\widetilde{\theta}^F$ of
$(p^*_1 F)\otimes (p^*_2 F)$ over $2\Delta$.

\item For the connection $\widetilde{\nabla}_F$ on $F\otimes F$ induced by $\nabla_F$,
\begin{gather*}
\widetilde{\nabla}_F (B) = \widetilde{\theta}^F- \eta^*\widetilde{\theta}^F
 \in H^0\big(\Sigma, \operatorname{Sym}^2(F)\otimes K_\Sigma\big) ,
\end{gather*}
where $\eta$ is the involution of $\Sigma\times\Sigma$ in \eqref{eta}.
\end{itemize}
In particular, $B$ is covariant constant with respect to the connection $\widetilde{\nabla}_F$ if and
only if $\eta^*\widetilde{\theta}^F = \widetilde{\theta}^F$.
\end{Lemma}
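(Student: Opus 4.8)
The plan is to reduce the statement to the standard dictionary between holomorphic connections and first-order splittings along the diagonal (the one already invoked, following \cite{Bi3,De}, in the proof of Proposition~\ref{lem1}), and then to carry out a single local computation. Recall that $\theta^F$ is by construction the section of $(p^*_1 F)\otimes(p^*_2 F^*)$ over $2\Delta$ whose restriction to the reduced diagonal equals the identity endomorphism of $F$ under the identification $(p^*_1 F)\otimes(p^*_2 F^*)\big|_\Delta = \operatorname{Hom}(F,F)$, and whose first-order part along $\Delta$ encodes $\nabla_F$. Concretely, in a local holomorphic coordinate $z$, a local frame for $F$, and with $w = p^*_2 z$, parallel transport gives $\theta^F = \operatorname{Id} - (z-w)A + O\big((z-w)^2\big)$, where $A$ is the connection matrix of $\nabla_F$. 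The first assertion is then immediate: since $B \in H^0(\Sigma,\operatorname{Sym}^2(F))$ is nondegenerate it furnishes an isomorphism $B^\flat\colon F^* \to F$, and applying $\operatorname{Id}_{p^*_1 F}\otimes p^*_2 B^\flat$ to $\theta^F$ replaces the factor $p^*_2 F^*$ by $p^*_2 F$, producing the desired section $\widetilde{\theta}^F$ of $(p^*_1 F)\otimes(p^*_2 F)$ over $2\Delta$.

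For the main identity I would first observe that $\widetilde{\theta}^F$ restricts on $\Delta$ to $B$ itself (because $\theta^F\big|_\Delta = \operatorname{Id}$ and $B^\flat$ carries the identity to $B$ viewed inside $F\otimes F$), so its leading term is symmetric; hence $\widetilde{\theta}^F - \eta^*\widetilde{\theta}^F$ vanishes on $\Delta$ and, via the canonical isomorphism $\mathcal{I}_\Delta/\mathcal{I}_\Delta^2 \cong K_\Sigma$, descends to a section over $\Delta \cong \Sigma$ valued in $F\otimes F\otimes K_\Sigma$. Moreover this difference is anti-invariant under $\eta^*$, while the generator $(z-w)$ of $\mathcal{I}_\Delta$ satisfies $\eta^*(z-w) = -(z-w)$, so the resulting $K_\Sigma$-valued section is $\eta$-invariant, i.e.\ lands in $\operatorname{Sym}^2(F)\otimes K_\Sigma$, matching the target of $\widetilde{\nabla}_F(B)$. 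The heart of the argument is then the local comparison: expanding $\widetilde{\theta}^F = B(w) - (z-w)\,(A\!\cdot\! B)(w) + O\big((z-w)^2\big)$ and forming $\eta^*\widetilde{\theta}^F$ by swapping $z \leftrightarrow w$ together with the two tensor slots, the subtraction yields a coefficient of $-(z-w)$ equal to $B' + (A\!\cdot\! B) + (A\!\cdot\! B)^\top$, where symmetry $B^{jk}=B^{kj}$ is used to align the two contributions. On the other hand a direct computation of the tensor-product connection gives $\widetilde{\nabla}_F(B) = \big(B' + (A\!\cdot\! B) + (A\!\cdot\! B)^\top\big)\,{\rm d}z$, so identifying $-(z-w)$ with the generator ${\rm d}z$ of $K_\Sigma$ under $\mathcal{I}_\Delta/\mathcal{I}_\Delta^2 \cong K_\Sigma$ produces the asserted equality $\widetilde{\nabla}_F(B) = \widetilde{\theta}^F - \eta^*\widetilde{\theta}^F$. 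The final clause is then immediate, since $B$ is covariant constant for $\widetilde{\nabla}_F$ precisely when the right-hand side vanishes, that is, when $\eta^*\widetilde{\theta}^F = \widetilde{\theta}^F$.

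The step I expect to require the most care is the bookkeeping in $\eta^*\widetilde{\theta}^F$: because $\eta$ interchanges the two factors of $\Sigma\times\Sigma$, pulling back a section of $(p^*_1 F)\otimes(p^*_2 F)$ simultaneously exchanges $z$ with $w$ and transposes the two $F$-slots, and one must track this consistently in the frame. The only other delicate point is fixing the sign in the isomorphism $\mathcal{I}_\Delta/\mathcal{I}_\Delta^2 \cong K_\Sigma$ (equivalently, the normalization of $\theta^F$ as parallel transport); but this convention enters both $\widetilde{\theta}^F$ and the identification of $-(z-w)$ with ${\rm d}z$ in the same way, so it cancels out of the final identity.
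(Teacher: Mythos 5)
Your proposal is correct and follows essentially the same route as the paper: use $B^\flat$ to convert $\theta^F$ into $\widetilde{\theta}^F$, note both $\widetilde{\theta}^F$ and $\eta^*\widetilde{\theta}^F$ restrict to $B$ on $\Delta$ so their difference lies in $\operatorname{Sym}^2(F)\otimes K_\Sigma$ via ${\mathcal O}_{\Sigma\times\Sigma}(-\Delta)\vert_\Delta \cong K_\Sigma$, and then identify the difference with $\widetilde{\nabla}_F(B)$. The only difference is that you actually carry out in a local frame the final identification that the paper dismisses as ``straightforward to check,'' and you make explicit the $\eta$-anti-invariance argument for landing in the symmetric part --- both welcome elaborations, with the sign convention in $\mathcal{I}_\Delta/\mathcal{I}_\Delta^2 \cong K_\Sigma$ correctly flagged as the one point needing a fixed normalization.
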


\begin{proof}Since $B$ identifies $F$ with $F^*$, we have
$(p^*_1 F)\otimes (p^*_2 F^*) = (p^*_1 F)\otimes (p^*_2 F)$. So
$\theta^F$ produces a section $\widetilde{\theta}^F$ of
$(p^*_1 F)\otimes (p^*_2 F)$ over $2\Delta$.

The restriction of $\theta^F$ to $\Delta \subset 2\Delta$ is $\text{Id}_F$. Hence we have
\begin{gather*}\widetilde{\theta}^F\vert_\Delta = B = \big(\eta^*\widetilde{\theta}^F\big)\vert_\Delta .\end{gather*}
This, and the fact that ${\mathcal O}_{\Sigma\times\Sigma}(-\Delta)\vert_\Delta = K_\Delta$ with $K_\Delta$
being the holomorphic cotangent bundle of $\Delta$, together imply that
\begin{gather*}
\widetilde{\theta}^F- \eta^*\widetilde{\theta}^F
 \in H^0\big(\Sigma, \operatorname{Sym}^2(F)\otimes K_\Sigma\big)
\end{gather*}
after we identify $\Delta$ with $\Sigma$ using the natural map $x \longmapsto (x, x)$. Now it is
straightforward to check that $\widetilde{\theta}^F- \eta^*\widetilde{\theta}^F = \widetilde{\nabla}_F (B)$.
\end{proof}

Let
\begin{gather*}
{\mathbb H} \colon \ Q\otimes \big(Q\otimes K^n_\Sigma\big) \longrightarrow K_\Sigma
\end{gather*}
be the homomorphism defined by
\begin{gather*}
Q\otimes \big(Q\otimes K^n_\Sigma\big) = \big(Q\otimes \big(Q\otimes K^{\otimes (n-1)}_\Sigma\big)\big)\otimes
K_\Sigma \stackrel{{\mathbb S}^\vee_0\otimes {\rm Id}}{\longrightarrow} K_\Sigma ,
\end{gather*}
where ${\mathbb S}^\vee_0$ is the homomorphism in~\eqref{e13b}. Consider $\mathbb H$ as a
paring $\langle-, - \rangle$ between $Q$ and $Q\otimes K^n_\Sigma$ with values in~$K_\Sigma$.
Then it can be shown that the differential operator ${\mathcal D}$ in~\eqref{dod} satisfies
the equation
\begin{gather}\label{dei}
\langle {\mathcal D}(s), t\rangle = \langle s, {\mathcal D}(t)\rangle
\end{gather}
for any locally defined holomorphic sections $s$ and $t$ of $Q$. Indeed, \eqref{dei} is an
exact reformulation of~\eqref{eta}. In other words, the differential operator ${\mathcal D}$ is ``self-adjoint''.

\begin{Remark}Given the above defined pairing $\langle- , -\rangle$
between $Q$ and $Q\otimes K^n_\Sigma$ with values in~$K_\Sigma$, for any differential operator
\[
D_n \in H^0\big(\Sigma, \operatorname{Diff}^n_\Sigma\big(Q, Q\otimes K^n_\Sigma\big)\big) ,
\]
the adjoint differential operator $D^*_n$ defined by the equation
\[
\langle D_n(s), t\rangle = \langle s, D^*_n(t)\rangle ,
\]
where $s$ and $t$ are any locally defined holomorphic sections of~$Q$, is similar to the classical Lagrange adjoint~\cite{r8}. More precisely, when the rank of~$Q$ is one, the above defined adjoint map coincides with the Lagrange adjoint.
\end{Remark}

\subsection{Symplectic and orthogonal opers}\label{unique}

In what follows, using the connection $\nabla$ on $\CQ$ from Proposition~\ref{lem1}, we shall
construct a~holomorphic isomorphism between the bundles $J^{n-1}(Q)$ and $\CQ\otimes
J^{n-1}\big(K^{(1-n)/2}_\Sigma\big)$, which in turn would enable us to express generalized $B$-opers in terms of classical opers together with a fiberwise non-degenerate symmetric bilinear form on~$ \CQ ^*$,
and a holomorphic connection on~$\CQ^*$ that preserves this form.

As before, $(E, {\mathcal F}, D)$ is a generalized $B$-oper on $\Sigma$.

Given any point $x \in \Sigma$ and $v \in \CQ_x$, let $\widetilde{v}$ be the unique flat section of~$\CQ$, defined around~$x$, such that $\widetilde{v}(x) = v$. Then, for each~$k$, just as the map $f_k$ in~\eqref{e10} is constructed, we have a~homomorphism
\begin{gather*}
\gamma^k_\nabla \colon \ \CQ \longrightarrow J^{k}(\CQ)
\end{gather*}
that sends any $v \in \CQ_x$, $x \in \Sigma$, to the element of~$J^{k}(\CQ)_x$ obtained by restricting the flat section~$\widetilde{v}$ to the $k$-th order infinitesimal neighborhood of~$x$. Moreover, we shall denote by
\begin{gather}\label{gn}
\Gamma^k_\nabla \colon \ \CQ\otimes J^{k}\big(K^{(1-n)/2}_\Sigma\big) \longrightarrow J^{k}(Q)
\end{gather}
the homomorphism given by the composition of homomorphisms
\begin{gather*}
\CQ\otimes J^{k}\big(K^{(1-n)/2}_\Sigma\big) \stackrel{\gamma^k_\nabla
\otimes{\rm Id}}{\longrightarrow} J^{k}(\CQ)\otimes J^{k}\big(K^{(1-n)/2}_\Sigma\big)
\longrightarrow J^{k}\big(\CQ\otimes K^{(1-n)/2}_\Sigma\big) = J^{k}(Q) ,
\end{gather*}
where the homomorphism $J^{k}(\CQ)\otimes J^{k}\big(K^{(1-n)/2}_\Sigma\big)
\longrightarrow J^{k}\big(\CQ\otimes K^{(1-n)/2}_\Sigma\big)$ is the
natural homomorphism $J^k(A)\otimes J^k(B) \longrightarrow
J^k(A\otimes B)$ for any vector bundles $A$, $B$ on~$\Sigma$.
For ease of notation, in what follows we denote by~$\CJk$ the jet bundle
\begin{gather}\label{jk}
\CJk := J^{k-1}\big(K^{(1-k)/2}_\Sigma\big) .
\end{gather}
Then, the following lemma is established.

\begin{Lemma}\label{lem3}The homomorphism
\begin{gather*}
\Gamma^k_\nabla\colon \ \CQ\otimes J^{k}\big(K^{(1-n)/2}_\Sigma\big)
 \longrightarrow J^{k}(Q)
\end{gather*}
in \eqref{gn} is an isomorphism for each $k$. Hence the isomorphism $f_{n-1}$ in
Theorem~{\rm \ref{thm1}} produces a holomorphic isomorphism of $E = J^{n-1}(Q)$ with
$\CQ\otimes\CJn$, where $\CJn$ is defined in~\eqref{jk}.
\end{Lemma}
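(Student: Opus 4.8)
The plan is to recognize $\Gamma^k_\nabla$ as a special instance of the canonical isomorphism constructed in Proposition~\ref{prop-j}, so that the first assertion requires essentially no new work beyond matching notation.

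First I would invoke Proposition~\ref{lem1}, which equips $\CQ$ with the holomorphic connection $\nabla$. With respect to this connection, the map $\gamma^k_\nabla \colon \CQ \to J^k(\CQ)$ sending a vector to the $k$-jet of its unique $\nabla$-flat local extension is precisely the homomorphism $\eta$ appearing in the proof of Proposition~\ref{prop-j}, for the choice $W = \CQ$ and $D_W = \nabla$. Taking moreover $V = K^{(1-n)/2}_\Sigma$, the defining identity \eqref{CQ} gives the identification $\CQ \otimes K^{(1-n)/2}_\Sigma = (E/E_{n-1}) \otimes K^{(n-1)/2}_\Sigma \otimes K^{(1-n)/2}_\Sigma = Q$, whence $J^k(V \otimes W) = J^k(Q)$. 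Under these identifications, and after commuting the two tensor factors, the composition defining $\Gamma^k_\nabla$ in \eqref{gn} coincides with the composition defining $\varphi$ in Proposition~\ref{prop-j}. That proposition therefore yields directly that $\Gamma^k_\nabla$ is an isomorphism for every $k$.

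For the second assertion I would specialize to $k = n-1$. By the definition \eqref{jk} of $\CJk$ (with $k$ replaced by $n$), we have $J^{n-1}\big(K^{(1-n)/2}_\Sigma\big) = \CJn$, so the isomorphism $\Gamma^{n-1}_\nabla$ just established reads $\CQ \otimes \CJn \xrightarrow{\sim} J^{n-1}(Q)$. Composing its inverse with the isomorphism $f_{n-1} \colon E \xrightarrow{\sim} J^{n-1}(Q)$ from Theorem~\ref{thm1} produces the desired holomorphic isomorphism $E \cong \CQ \otimes \CJn$.

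The only points demanding care are bookkeeping: matching the order of the two tensor factors in \eqref{gn} against the statement of Proposition~\ref{prop-j}, and confirming the rank equality $\operatorname{rank}\big(\CQ \otimes J^k\big(K^{(1-n)/2}_\Sigma\big)\big) = (k+1)\operatorname{rank}(Q) = \operatorname{rank}\big(J^k(Q)\big)$ that underlies the ``fiberwise injective hence isomorphism'' step internal to Proposition~\ref{prop-j}. Since these are routine, I anticipate no genuine obstacle; the lemma is in essence a transcription of Proposition~\ref{prop-j} to the bundle $\CQ$ carrying the connection $\nabla$.
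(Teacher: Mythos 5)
Your proposal is correct, and it reaches the result by a slightly different route than the paper. The paper proves Lemma \ref{lem3} directly: over a simply connected neighborhood $U_x$ of any point it trivializes $\CQ\vert_{U_x}$ by $\nabla$-flat sections (flatness being automatic on a Riemann surface), so that $Q\vert_{U_x} = \big(\CQ\otimes K^{(1-n)/2}_\Sigma\big)\vert_{U_x}$ gets identified with $\big(K^{(1-n)/2}_\Sigma\big)^{\oplus r}\vert_{U_x}$ and $\Gamma^k_\nabla$ becomes the evident isomorphism $J^{k}\big(K^{(1-n)/2}_\Sigma\big)^{\oplus r} \longrightarrow J^{k}\big(\big(K_\Sigma^{(1-n)/2}\big)^{\oplus r}\big)$; fiberwise injectivity together with the rank equality $(k+1)r$ on both sides then yields the isomorphism. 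You instead observe that $\gamma^k_\nabla$ is literally the map $\eta$ from the proof of Proposition \ref{prop-j} for $W = \CQ$ and $D_W = \nabla$ (with $\nabla$ supplied by Proposition \ref{lem1}), and that, after commuting the tensor factors, the composition in \eqref{gn} is the map $\varphi$ of Proposition \ref{prop-j} with $V = K^{(1-n)/2}_\Sigma$, so the first assertion is an instance of that proposition. This is legitimate and non-circular, since Proposition \ref{prop-j} precedes the lemma and its proof does not depend on it; in effect the paper re-proves a special case of its own Proposition \ref{prop-j} by hand, and your reduction eliminates that duplication. What the paper's direct argument buys is the explicit local model: the flat trivialization of $\CQ$ near a point is the same device reused later (for instance, the identification of $p^*_1\CQ$ with $p^*_2\CQ$ near the diagonal in the proof of Proposition \ref{prop2}), so spelling it out inside the lemma has expository value, whereas your version makes transparent that the lemma is a formal consequence of general jet-bundle facts. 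Two bookkeeping points only: Proposition \ref{prop-j} is stated for jet order at least $1$, so the trivial case $k = 0$ of ``for each $k$'' should be noted separately; and your treatment of the second assertion, namely $\CJn = J^{n-1}\big(K^{(1-n)/2}_\Sigma\big)$ via \eqref{jk} followed by composition with $f_{n-1}$ from Theorem \ref{thm1}, coincides exactly with the paper's.
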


\begin{proof}We will show that the homomorphism~$\Gamma^k_\nabla$ is fiberwise injective. For that,
take any point $x \in \Sigma$, and consider the restriction of $\CQ$ to a simply
connected open neighborhood~$U_x$ of~$x$. Recall from Proposition~\ref{lem1} that $\CQ$ is
equipped with a holomorphic connection $\nabla$. As mentioned earlier, any holomorphic connection
on a Riemann surface is automatically flat. Using this flat connection $\nabla$ on $\CQ$, we trivialize
the restriction $\CQ\vert_{U_x}$ of~$\CQ$ to $U_x$ (this is possible because~$U_x$ is simply
connected). Since $\CQ \otimes K_\Sigma^{(1-n)/2} = Q$ (see~\eqref{CQ}), using this trivialization of $\CQ\vert_{U_x}$, the restriction $Q\vert_{U_x}$ of~$Q$
to $U_x$ gets identified with $\big(K_\Sigma^{(1-n)/2}\big)^{\oplus r}\vert_{U_x}$, where $r = \operatorname{rank}(Q)$. Using this identification of $Q\vert_{U_x}$ with $\big(K_\Sigma^{(1-n)/2}\big)^{\oplus r}\vert_{U_x}$, the restriction of the homomorphism $\Gamma^k_\nabla$ in~\eqref{gn} to $U_x$ gets identified with the
natural homomorphism
\begin{gather}\label{j1}
J^{k}\big(K^{(1-n)/2}_\Sigma\big)^{\oplus r} \longrightarrow J^{k}\big(\big(K_\Sigma^{(1-n)/2}\big)^{\oplus r}\big) .
\end{gather}
The homomorphism in~\eqref{j1} is evidently fiberwise injective. Consequently,
$\Gamma^k_\nabla$ is fiberwise injective. The homomorphism in~\eqref{j1} is also fiberwise
surjective, using which it can be deduced that~$\Gamma^k_\nabla$ is an isomorphism. Alternatively, since
\begin{gather*}\operatorname{rank}\big(\CQ\otimes J^{k}\big(K^{(1-n)/2}_\Sigma\big)\big) = (k+1)r =
\operatorname{rank}\big(J^{k}(Q)\big) ,\end{gather*}
$\Gamma^k_\nabla$ is fiberwise surjective, because it is fiberwise injective.
\end{proof}

\begin{Corollary}\label{corji} For a generalized $B$-oper $(E, {\mathcal F}, D)$ on $\Sigma$, the holomorphic vector bundle $E$ is canonically identified with $\CQ\otimes J^{n-1}\big(K^{(1-n)/2}_\Sigma\big)$.
\end{Corollary}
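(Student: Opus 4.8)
The plan is to obtain the claimed identification by composing the two isomorphisms already at our disposal. First I would invoke Theorem~\ref{thm1}, which furnishes a holomorphic isomorphism $f_{n-1}\colon E \stackrel{\sim}{\longrightarrow} J^{n-1}(Q)$ built directly from the connection $D$ through its flat sections. Next I would specialize Lemma~\ref{lem3} to the value $k = n-1$, producing the isomorphism
\[
\Gamma^{n-1}_\nabla\colon \ \CQ\otimes J^{n-1}\big(K^{(1-n)/2}_\Sigma\big) \stackrel{\sim}{\longrightarrow} J^{n-1}(Q),
\]
where $\nabla$ is the connection on $\CQ$ constructed in Proposition~\ref{lem1}. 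Composing $f_{n-1}$ with the inverse of $\Gamma^{n-1}_\nabla$ then yields
\[
\big(\Gamma^{n-1}_\nabla\big)^{-1}\circ f_{n-1}\colon \ E \stackrel{\sim}{\longrightarrow} \CQ\otimes J^{n-1}\big(K^{(1-n)/2}_\Sigma\big) = \CQ\otimes\CJn,
\]
the last equality being the definition~\eqref{jk} of $\CJn$.

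The substance of the corollary lies not in producing this composite but in the word \emph{canonical}, so the remaining task is to verify that no auxiliary choices enter. Here I would trace each ingredient back to the oper data: the isomorphism $f_{n-1}$ is manufactured purely from $(E,\mathcal F,D)$ through the flat-section construction of~\eqref{e10}; the connection $\nabla$ on $\CQ$ is extracted from the same $D$ by way of the self-adjoint differential operator $\mathcal D$ of Proposition~\ref{lem1}; and the jet bundle $J^{n-1}\big(K^{(1-n)/2}_\Sigma\big)$ is formed from the line bundle $K^{(1-n)/2}_\Sigma$, which carries its own canonical connection by Corollary~\ref{cor-j}. The only external datum is the theta characteristic $K^{1/2}_\Sigma$ fixed once and for all in Section~\ref{spin}; relative to that fixed choice, every arrow in the composite is determined by $(E,\mathcal F,D)$, so the identification is canonical.

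Since Lemma~\ref{lem3} has already carried out the genuinely nontrivial computation---trivializing $\CQ$ by its flat connection $\nabla$ over a simply connected chart and matching the restricted map against the tautological homomorphism in~\eqref{j1}---there is no serious obstacle remaining. The corollary is essentially a repackaging of the final assertion of Lemma~\ref{lem3}, and the only point demanding care is the bookkeeping confirming that the composite is independent of all choices save the fixed spin structure.
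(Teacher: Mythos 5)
Your proof is correct and takes essentially the same route as the paper, which likewise deduces the corollary by combining Theorem~\ref{thm1} with Lemma~\ref{lem3} specialized at $k = n-1$. Your extra verification of canonicity---tracing $f_{n-1}$, the connection $\nabla$, and the jet bundle back to the oper data modulo the fixed theta characteristic---is a sound elaboration of what the paper leaves implicit in its one-line proof.
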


\begin{proof} This follows from the combination of Theorem~\ref{thm1} and Lemma~\ref{lem3}.
\end{proof}

\begin{Lemma}\label{lem111} Associated to $(E, {\mathcal F}, D)$, there is a natural homomorphism
\begin{gather*}
\tau \colon \ \operatorname{Diff}^n_\Sigma\big(Q, Q\otimes K^n_\Sigma\big) \longrightarrow
\operatorname{Diff}^n_\Sigma\big(K^{(1-n)/2}_\Sigma , K^{(n+1)/2}_\Sigma\big) .
\end{gather*}
\end{Lemma}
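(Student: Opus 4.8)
The plan is to build $\tau$ by transporting a differential operator on $Q$ to one on the line bundle $K^{(1-n)/2}_\Sigma$ through the canonical factorization of the relevant jet bundles supplied by the connection $\nabla$ on $\CQ$, and then to eliminate the auxiliary factor $\CQ$ by a trace. By the definition \eqref{do} of a sheaf of differential operators, we have $\operatorname{Diff}^n_\Sigma(Q, Q\otimes K^n_\Sigma) = \operatorname{Hom}(J^n(Q), Q\otimes K^n_\Sigma)$ and $\operatorname{Diff}^n_\Sigma(K^{(1-n)/2}_\Sigma, K^{(n+1)/2}_\Sigma) = \operatorname{Hom}(J^n(K^{(1-n)/2}_\Sigma), K^{(n+1)/2}_\Sigma)$, so it suffices to produce a natural homomorphism between these two $\operatorname{Hom}$-bundles.

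First I would rewrite the source. Applying Lemma~\ref{lem3} with $k = n$, the connection $\nabla$ furnished by Proposition~\ref{lem1} gives a canonical isomorphism $\Gamma^n_\nabla \colon \CQ\otimes J^n\big(K^{(1-n)/2}_\Sigma\big) \xrightarrow{\sim} J^n(Q)$. On the target side, \eqref{CQ} yields $Q = \CQ\otimes K^{(1-n)/2}_\Sigma$, and hence the canonical identification $Q\otimes K^n_\Sigma = \CQ\otimes K^{(n+1)/2}_\Sigma$. Combining these gives
\[
\operatorname{Diff}^n_\Sigma\big(Q, Q\otimes K^n_\Sigma\big) = \operatorname{Hom}\big(\CQ\otimes J^n\big(K^{(1-n)/2}_\Sigma\big),\, \CQ\otimes K^{(n+1)/2}_\Sigma\big).
\]

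Next I would decompose this $\operatorname{Hom}$-bundle as
\[
\CQ^*\otimes \CQ \otimes \operatorname{Hom}\big(J^n\big(K^{(1-n)/2}_\Sigma\big),\, K^{(n+1)/2}_\Sigma\big),
\]
and define $\tau$ by contracting the first two factors via the natural evaluation pairing $\CQ^*\otimes\CQ \to {\mathcal O}_\Sigma$ (equivalently, the trace on $\operatorname{End}(\CQ)$; the same pairing may be realized through the symmetric form $\mathbb S$ of Proposition~\ref{cor1}, which identifies $\CQ$ with $\CQ^*$). Since evaluation is ${\mathcal O}_\Sigma$-linear, tensoring it with the identity on $\operatorname{Hom}\big(J^n\big(K^{(1-n)/2}_\Sigma\big), K^{(n+1)/2}_\Sigma\big)$ yields a bundle homomorphism landing precisely in $\operatorname{Diff}^n_\Sigma\big(K^{(1-n)/2}_\Sigma, K^{(n+1)/2}_\Sigma\big)$, as required, and by construction it depends on $(E, {\mathcal F}, D)$ only through the induced connection $\nabla$.

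The construction is essentially formal once the factorization isomorphism is in place, so the only substantive input is Lemma~\ref{lem3}, that is, the existence of the connection $\nabla$ on $\CQ$ coming from the $B$-oper structure; I do not expect any genuine obstacle beyond bookkeeping. The main point to verify is that every identification used is canonical: the jet-bundle tensor identity $J^n(A)\otimes B \cong J^n(A\otimes B)$ for $B$ carrying a connection (Proposition~\ref{prop-j} and Corollary~\ref{cor-j}) underlies $\Gamma^n_\nabla$, the identification of the target is dictated by \eqref{CQ}, and the terminal evaluation $\CQ^*\otimes\CQ\to {\mathcal O}_\Sigma$ is intrinsic; hence the resulting $\tau$ is natural.
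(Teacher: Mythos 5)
Your proposal is correct and takes essentially the same route as the paper: the paper likewise combines the isomorphism of Lemma~\ref{lem3} for $k=n$ with the definition \eqref{do} to rewrite $\operatorname{Diff}^n_\Sigma\big(Q, Q\otimes K^n_\Sigma\big)$ as $\operatorname{Diff}^n_\Sigma\big(K^{(1-n)/2}_\Sigma, K^{(n+1)/2}_\Sigma\big)\otimes\operatorname{End}(Q)$ and then contracts away the endomorphism factor (your $\CQ^*\otimes\CQ$) by the trace. The only cosmetic difference is that the paper uses the normalized trace $s\longmapsto \frac{1}{\operatorname{rank}(Q)}\operatorname{trace}(s)$ instead of the plain evaluation pairing, a scalar normalization immaterial to the existence and naturality of $\tau$.
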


\begin{proof}Combining the isomorphism in Lemma~\ref{lem3} for $k \!=\! n$ with the
definition of $\operatorname{Diff}^m_\Sigma(W, W')$ in~\eqref{do} for $m=n$, $W=Q$ and $W'=Q\otimes K^n_\Sigma$, we have
\begin{align}
\operatorname{Diff}^n_\Sigma\big(Q, Q\otimes K^n_\Sigma\big) &= \big(Q\otimes K^n_\Sigma\big) \otimes J^n(Q)^*
= Q\otimes K^n_\Sigma \otimes \big(\CQ\otimes J^{n}\big(K^{(1-n)/2}_\Sigma\big)\big)^*\nonumber\\
&= Q\otimes K^n_\Sigma \otimes \big(Q\otimes K^{(n-1)/2}_\Sigma
\otimes J^{n}\big(K^{(1-n)/2}_\Sigma\big)\big)^*\nonumber\\
&= \operatorname{Diff}^n_\Sigma \big(K^{(1-n)/2}_\Sigma , K^{(n+1)/2}_\Sigma\big)\otimes\operatorname{End}(Q) . \label{ij}
\end{align}
Using the trace homomorphism
\begin{align*}
\operatorname{End}(Q) &\longrightarrow {\mathcal O}_\Sigma ,\nonumber\\
 s &\longmapsto
\frac{1}{\operatorname{rank}(Q)}\operatorname{trace}(s) ,\nonumber
\end{align*}
the isomorphism in~\eqref{ij} gives the homomorphism
\begin{gather*}
\tau \colon \ \operatorname{Diff}^n_\Sigma\big(Q, Q\otimes K^n_\Sigma\big) \longrightarrow
\operatorname{Diff}^n_\Sigma \big(K^{(1-n)/2}_\Sigma , K^{(n+1)/2}_\Sigma\big)
\end{gather*}
as required.
\end{proof}

Although the holomorphic connection $\nabla$ on $\CQ$ does not explicitly arise in the statement or proof
of Lemma \ref{lem111}, the homomorphism $\tau$ in Lemma \ref{lem111} crucially uses $\nabla$. Indeed,
the isomorphism $\Gamma^k_\nabla$ in Lemma \ref{lem3}, whose construction uses $\nabla$, is the
key ingredient in the construction of $\tau$ in Lemma \ref{lem111}.

Through the map $\tau$ from Lemma \ref{lem111} we shall see how generalized $B$-opers relate
to the classical ${\rm Sp}(n,\C)$ and ${\rm SO}(n,\C)$-opers as introduced in Section \ref{intro}.

\begin{Proposition}\label{prop2} Consider the differential operator $\mathcal D$ in~\eqref{dod}. For the map $\tau$ in Lemma~{\rm \ref{lem111}}, The differential operator
\begin{gather*}
\tau({\mathcal D}) \in H^0\big(\Sigma, \operatorname{Diff}^n_\Sigma\big(K^{(1-n)/2}_\Sigma,
K^{(n+1)/2}_\Sigma\big)\big)
\end{gather*}
is a classical ${\rm Sp}(n, {\mathbb C})$-oper for $n$ even, and a~classical ${\rm SO}(n, {\mathbb C})$-oper for $n$ odd.
\end{Proposition}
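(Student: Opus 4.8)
The plan is to reduce the statement to the defining properties of classical $\mathrm{Sp}(n,\mathbb C)$- and $\mathrm{SO}(n,\mathbb C)$-opers as recorded in Remark~\ref{rem1}, using the machinery already assembled. First I would unwind what $\tau(\mathcal D)$ is: by Lemma~\ref{lem111}, applying $\tau$ to $\mathcal D \in H^0\big(\Sigma, \operatorname{Diff}^n_\Sigma\big(Q, Q\otimes K^n_\Sigma\big)\big)$ uses the isomorphism $\Gamma^n_\nabla$ of Lemma~\ref{lem3} together with the trace on $\operatorname{End}(Q)$, producing a differential operator of order $n$ from $K^{(1-n)/2}_\Sigma$ to $K^{(n+1)/2}_\Sigma$. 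I would first verify that $\tau(\mathcal D)$ is a classical $\mathrm{SL}(n,\mathbb C)$-oper, i.e.\ that it defines an $\mathrm{SL}(n,\mathbb C)$-oper structure on $J^{n-1}\big(K^{(1-n)/2}_\Sigma\big)$ with its canonical jet filtration \eqref{e11}. Here the rank of the successive quotients being one is automatic from Lemma~\ref{kernel1}, and the non-degeneracy (symbol and second fundamental forms being isomorphisms) is inherited from the corresponding non-degeneracy of $\mathcal D$, since by Corollary~\ref{corji} the bundle $E$ is identified with $\CQ\otimes J^{n-1}\big(K^{(1-n)/2}_\Sigma\big)$ and the oper data of $(E,{\mathcal F},D)$ transports through $\Gamma^k_\nabla$ to the jet filtration.

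The core of the argument is the bilinear-form structure, and this is where Proposition~\ref{lem1} and its accompanying lemma do the work. The key observation is the self-adjointness relation \eqref{dei}, namely $\langle \mathcal D(s), t\rangle = \langle s, \mathcal D(t)\rangle$, which is an exact reformulation of the symmetry $\eta^*\kappa = \kappa$ in \eqref{eta}. Under the identification of $E$ with $J^{n-1}\big(K^{(1-n)/2}_\Sigma\big)\otimes\CQ$ and after applying the trace, I would show that this self-adjointness transfers to a horizontal non-degenerate bilinear form on the underlying bundle $J^{n-1}\big(K^{(1-n)/2}_\Sigma\big)$ of $\tau(\mathcal D)$. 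Concretely, the section ${\mathbb S}$ of $\operatorname{Sym}^2(\CQ)$ from Proposition~\ref{cor1}, which Proposition~\ref{lem1} shows is flat for the connection $\nabla$, supplies the form on the $\CQ$-factor; tensoring with the canonical pairing on the jet bundle $J^{n-1}\big(K^{(1-n)/2}_\Sigma\big)$ yields a horizontal bilinear form on $E$, hence on $\tau(\mathcal D)$. The parity input is decisive: by Definition~\ref{parity}(1), $n$ is even exactly when $B$ is symplectic and odd exactly when $B$ is orthogonal, and Proposition~\ref{prop1}/Lemma~\ref{lem22} already pin down that the resulting form on $E_1$ (equivalently on $\CQ^*$) is symmetric in both cases. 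What remains to check is the symmetry type of the form induced on the scalar jet bundle after the tensor factorization, and this is where the sign $(-1)^{n-2}$ in \eqref{e8} combines with the symmetry of ${\mathbb S}$ to give an orthogonal form when $n$ is odd and a symplectic form when $n$ is even.

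I would then confirm the compatibility $E_i^\perp = E_{n-i}$ required by Remark~\ref{rem1}: this follows from the $B$-filtration condition in Definition~\ref{parity}(2) transported through the isomorphism $\Gamma^{n-1}_\nabla$, since $\Gamma^k_\nabla$ carries ${\mathcal F}$ to the jet filtration \eqref{e11} compatibly with the forms. Finally, the horizontality of the induced form with respect to the connection defining $\tau(\mathcal D)$ is precisely the flatness of ${\mathbb S}$ established in Proposition~\ref{lem1}, combined with the canonical flat structure on the scalar factor. Assembling these pieces, $\tau(\mathcal D)$ is an $\mathrm{SL}(n,\mathbb C)$-oper carrying a horizontal non-degenerate bilinear form preserving the filtration duality, hence a classical $\mathrm{Sp}(n,\mathbb C)$-oper for $n$ even and a classical $\mathrm{SO}(n,\mathbb C)$-oper for $n$ odd.

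\textbf{Main obstacle.} The delicate step is tracking the symmetry type through the tensor-factorization $E \cong \CQ\otimes J^{n-1}\big(K^{(1-n)/2}_\Sigma\big)$ and the trace map in Lemma~\ref{lem111}. The form on $E$ naturally splits as (a form on $\CQ$) $\otimes$ (a form on the scalar jet bundle), and since $\operatorname{Sym}^2(\CQ)$ contributes a \emph{symmetric} factor in every case, the entire symmetry type of $\tau(\mathcal D)$ is dictated by the scalar jet factor. I expect the genuine technical content to lie in verifying that the canonical pairing on $J^{n-1}\big(K^{(1-n)/2}_\Sigma\big)$ induced by self-adjointness is symmetric for $n$ odd and skew for $n$ even — equivalently, that the sign bookkeeping in \eqref{e8} survives intact after dividing out by $\CQ$ — so that the parity of $n$ alone determines the orthogonal-versus-symplectic dichotomy.
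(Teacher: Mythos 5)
Your proposal follows essentially the same route as the paper's proof: it rests on the self-adjointness relation \eqref{dei} (equivalently $\eta^*\kappa = \kappa$), uses the decomposition $E \cong \CQ\otimes J^{n-1}\big(K^{(1-n)/2}_\Sigma\big)$ from Corollary~\ref{corji} to cut the form on the scalar jet factor out of $B$ and $\mathbb S$ via the embedding $J^{n-1}\big(K^{(1-n)/2}_\Sigma\big) \hookrightarrow E\otimes\CQ^*$, and derives horizontality from Proposition~\ref{lem1} together with $D$ being a $B$-connection, exactly as the paper does. The one step you flag as the main obstacle---that the induced pairing on the scalar jet bundle is symmetric for $n$ odd and skew for $n$ even---is indeed where the paper also defers detail (it invokes the classical fact that a self-adjoint operator with symbol $1$ yields an ${\rm Sp}$- or ${\rm SO}$-oper by parity, with the explicit sign analysis appearing later in Lemma~\ref{lemB} and Remark~\ref{reml}), so your plan and the paper's argument match in both structure and in where the residual technical content lies.
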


\begin{proof}Holomorphic differential operators on $\Sigma$ are given by holomorphic section of suitable bundles on the neighborhood of the diagonal (see \cite[p.~25, equation~(5.1)]{Bi3} for the precise statement). First we shall describe the homomorphism~$\tau$ in Lemma~\ref{lem111} in terms of such sections.

Recall from \eqref{ka} that the differential operator $\mathcal D$ in~\eqref{dod} is given by the section
\begin{gather*}
\kappa \in H^0\big((n+1)\Delta, p^*_1\big(K^{(n+1)/2}_\Sigma\otimes \CQ\big)\otimes p^*_2
\big(K^{(n+1)/2}_\Sigma\otimes {\CQ}\big)\otimes {\mathcal O}_{\Sigma\times\Sigma}((n+1)\Delta)\big)\\
\qquad {} =
H^0\big((n+1)\Delta, \big(p^*_1 K^{(n+1)/2}_\Sigma\big)\otimes \big(p^*_2
K^{(n+1)/2}_\Sigma\big)\otimes (p^*_1\CQ)\otimes(p^*_2 \CQ) \otimes {\mathcal O}_{\Sigma\times\Sigma}((n+1)\Delta)\big) .
\end{gather*}
It can be shown that using the flat connection $\nabla$ on $\CQ$, the two vector bundles
$p^*_1\CQ$ and $p^*_2\CQ$ are identified on some open neighborhood of the diagonal $\Delta
\subset \Sigma\times\Sigma$. Indeed, take an open subset $U_\Delta \subset
\Sigma\times\Sigma$ containing $\Delta$ such that $\Delta$ is a deformation retraction of $U_\Delta$.
For $i = 1, 2$, let $q_i\colon U_\Delta \longrightarrow \Sigma$ be the projection to the $i$-th
factor. Consider the two flat bundles $(q^*_1\CQ, q^*_1\nabla)$ and $(q^*_2\CQ, q^*_2\nabla)$
on~$U_\Delta$. On the diagonal $\Delta \subset U_\Delta$, both are evidently identified with the flat bundle
$(\CQ, \nabla)$ (once we identify~$\Delta$ with $\Sigma$ using the map $x \longmapsto (x, x)$). Since~$\Delta$ is a deformation retraction of~$U_\Delta$, this isomorphism between
$(q^*_1\CQ, q^*_1\nabla)\vert_\Delta$ and $(q^*_2\CQ, q^*_2\nabla)\vert_\Delta$ extends to an
isomorphism between $(q^*_1\CQ, q^*_1\nabla)$ and $(q^*_2\CQ, q^*_2\nabla)$ over the entire
$U_\Delta$. Using this isomorphism between $q^*_1\CQ$ and $q^*_2\CQ$, the above section $\kappa$ becomes
a section
\begin{gather*}
\kappa \in H^0\big((n+1)\Delta, \big(p^*_1 K^{(n+1)/2}_\Sigma\big)\otimes \big(p^*_2
K^{(n+1)/2}_\Sigma\big)\otimes p^*_1(\CQ\otimes\CQ) \otimes {\mathcal O}_{\Sigma\times\Sigma}((n+1)\Delta)\big) .
\end{gather*}
Now using the symmetric bilinear form ${\mathbb S}^*$ on $\CQ$ (see \eqref{e13}), the above section $\kappa$
produces a~section
\begin{gather*}
\widetilde{\kappa} \in H^0\big((n+1)\Delta, \big(p^*_1 K^{(n+1)/2}_\Sigma\big)\otimes \big(p^*_2
K^{(n+1)/2}_\Sigma\big)\otimes {\mathcal O}_{\Sigma\times\Sigma}((n+1)\Delta)\big) .
\end{gather*}
Using the isomorphism in \cite[p.~25, equation~(5.1)]{Bi3}, this section $\widetilde{\kappa}$ produces a
holomorphic differential operator
\begin{gather}\label{wtd}
\widetilde{\mathcal D} \in H^0\big(\Sigma, \operatorname{Diff}^n_\Sigma\big(K^{(1-n)/2}_\Sigma,
K^{(n+1)/2}_\Sigma\big)\big) .
\end{gather}
The differential operator $\widetilde{\mathcal D}$ in \eqref{wtd} coincides with
the differential operator $\tau({\mathcal D})$ in the statement of Proposition \ref{prop2}.

Let
\begin{gather*}
K^{(1-n)/2}_\Sigma\otimes K^{(n+1)/2}_\Sigma \longrightarrow K_\Sigma
\end{gather*}
be the natural homomorphism; we shall denote this $K_\Sigma$-valued pairing between
$K^{(1-n)/2}_\Sigma$ and $K^{(n+1)/2}_\Sigma$ by $\langle-, -\rangle$. Now from~\eqref{dei}
it follows that
\begin{gather}\label{dei2}
\langle \widetilde{\mathcal D}(s), t\rangle = \langle s, \widetilde{\mathcal D}(t)\rangle
\end{gather}
for locally defined holomorphic sections $s$ and $t$ of $K^{(1-n)/2}_\Sigma$.

The symbol of the differential operator $\mathcal D$ is $\text{Id}_Q$ \cite[p.~18]{Bi3} (see
the paragraph following~(4.6)). From this it follows that the symbol of $\widetilde{\mathcal
D}$ is the constant function $1$. Such a differential operator produces a holomorphic connection
on the jet bundle $J^{n-1}(K^{(1-n)/2}_\Sigma)$ (see \cite[p.~15, equation~(4.3)]{Bi2}; the
spaces~$\mathcal A$ and~$\mathcal B$ in \cite[equation~(4.3)]{Bi2} are defined in \cite[p.~13]{Bi2}). The holomorphic connection on~$J^{n-1}(K^{(1-n)/2}_\Sigma)$ given by the differential operator
$\widetilde{\mathcal D}$ will be denoted by~$\widehat D$.

Consequently, all the three vector bundles in Corollary \ref{corji}, namely $E$, $\CQ$ and
$J^{n{-}1}\!\big(K^{(1{-}n)/2}_\Sigma\big),\!$ are equipped with holomorphic connections. The
isomorphism in Corollary~\ref{corji} between $E$ and
$\CQ\otimes J^{n-1}\big(K^{(1-n)/2}_\Sigma\big)$ is connection preserving. In particular,
the above connection $\widehat D$ is an ${\rm SL}(n,{\mathbb C})$-connection; recall that~$D$ on~$E$ is an ${\rm SL}(nr,{\mathbb C})$-connection and $\nabla$ on $\CQ$ is an ${\rm SL}(r,{\mathbb C})$-connection, where $r = \operatorname{rank}(\CQ)$.

Therefore, $\widetilde{\mathcal D}$ defines an ${\rm SL}(n,{\mathbb C})$-oper on $\Sigma$.
We recall that the ${\rm Sp}(n, {\mathbb C})$ opers ($n$ is an even integer) and
${\rm SO}(n, {\mathbb C})$ opers ($n \geq 5$ is an odd integer) have the following property:
Let $\big(F, \nabla^F\big)$ be a holomorphic principal ${\rm Sp}(n, {\mathbb C})$-bundle, or a holomorphic
principal ${\rm SO}(n, {\mathbb C})$-bundle, $F$ equipped with a holomorphic connection $\nabla^F$, giving
such an oper. Consider the holomorphic principal ${\rm SL}(n, {\mathbb C})$-bundle equipped
with a holomorphic connection obtained by extending the structure group of $\big(F, \nabla^F\big)$ using the
natural inclusions of ${\rm Sp}(n, {\mathbb C})$ and ${\rm SO}(n, {\mathbb C})$ in
${\rm SL}(n, {\mathbb C})$. Then this holomorphic principal ${\rm SL}(n, {\mathbb C})$-bundle equipped
with a holomorphic connection is an ${\rm SL}(n, {\mathbb C})$ oper. It should be mentioned that this is not true for ${\rm SO}(2n, {\mathbb C})$-opers. This is one of the reasons why our method does not apply to the even orthogonal cases, which shall be treated separatedly in future work~\cite{BSY}.

Since $\widetilde{\mathcal D}$ satisfies \eqref{dei2}, the ${\rm SL}(n,{\mathbb C})$-oper
given by it is actually an ${\rm Sp}(n, {\mathbb C})$-oper when $n$ is even, and it is an
${\rm SO}(n, {\mathbb C})$-oper when $n$ is odd. In other words, the holomorphic
connection $\widehat D$ on $J^{n-1}\big(K^{(1-n)/2}_\Sigma\big)$ defines a
${\rm Sp}(n, {\mathbb C})$-oper when $n$ is even, and it defines a
${\rm SO}(n, {\mathbb C})$-oper when $n$ is odd. To construct the bilinear form on
$J^{n-1}\big(K^{(1-n)/2}_\Sigma\big)$ for this ${\rm Sp}(n, {\mathbb C})$ or
${\rm SO}(n, {\mathbb C})$ oper structure, consider the isomorphism
\begin{gather*}
\CQ\otimes J^{n-1}\big(K^{(1-n)/2}_\Sigma\big) \stackrel{\sim}{\longrightarrow} E
\end{gather*}
in Corollary~\ref{corji}. This produces an injective homomorphism
\begin{gather*}
J^{n-1}\big(K^{(1-n)/2}_\Sigma\big) \hookrightarrow E\otimes{\CQ}^* .
\end{gather*}
The bilinear form $B$ on $E$ and the bilinear form ${\mathbb S}$ on ${\CQ}^*$ (see
\eqref{e13} and Proposition \ref{cor1}) together produce a bilinear form on~$E\otimes{\CQ}^*$. The bilinear form on $J^{n-1}\big(K^{(1-n)/2}_\Sigma\big)$ is the restriction of this bilinear form
on $E\otimes{\CQ}^*$. (See also Remark~\ref{reml}.)

Let $\nabla$ denote the holomorphic connection on ${\CQ}^*$ induced
by the holomorphic connection $\nabla$ on $\CQ$.
Since the holomorphic connection $D$ (respectively, $\nabla^*$)
on $E$ (respectively, ${\CQ}^*$) preserves the bilinear form $B$ (respectively, $\mathbb S$)
on $E$ (respectively, ${\CQ}^*$), the holomorphic connection $\widehat D$ on
$J^{n-1}\big(K^{(1-n)/2}_\Sigma\big)$ preserves the bilinear form on $J^{n-1}\big(K^{(1-n)/2}_\Sigma\big)$
constructed above.
\end{proof}

From the above results, one has the following:

\begin{Theorem}\label{prop3}Let $(E, {\mathcal F}, D)$ be a generalized $B$-oper over $\Sigma$ of filtration length $n > 0$. Then, the following three objects are canonically associated to $(E, {\mathcal F}, D)$:
\begin{enumerate}\itemsep=0pt
\item[$1)$] a fiberwise non-degenerate symmetric bilinear form on
$ \CQ = E/E_{n-1}\otimes K^{\frac{n-1}{2}}$,
\item[$2)$] a holomorphic connection on $\CQ$ that preserves this
bilinear form, and
\item[$3)$] a classical ${\rm Sp}(n, {\mathbb C})$-oper for $n$ even and a classical
${\rm SO}(n, {\mathbb C})$-oper for $n$ odd.
\end{enumerate}
\end{Theorem}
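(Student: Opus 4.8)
The plan is to recognize that the statement consolidates the three constructions carried out in the preceding subsections, so the proof amounts to collecting them and checking that each is canonical with respect to $(E,{\mathcal F},D)$, once a theta characteristic $K^{1/2}_\Sigma$ has been fixed.

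First I would produce object~(1). The $K^{\otimes(n-1)}_\Sigma$-valued symmetric form ${\mathbb S}'$ on $E_1$ of \eqref{e6}--\eqref{e7}, transported through the identification $E^*_1 = Q$ of Lemma~\ref{lem11}, is the section ${\mathbb S} \in H^0(\Sigma,\operatorname{Sym}^2(\CQ))$ of \eqref{e13}; by Propositions~\ref{prop1} and~\ref{cor1} it is symmetric and fiberwise non-degenerate. Dualizing ${\mathbb S}$ yields the form ${\mathbb S}^\vee$ on $\CQ$ appearing in \eqref{e13b}, which is the required non-degenerate symmetric bilinear form. It is canonical because each ingredient of its construction --- the second fundamental forms $S_i(D)$, their composite ${\mathbb S}'$, and the isomorphism of Lemma~\ref{lem11} --- depends only on $(E,{\mathcal F},D)$ and on $K^{1/2}_\Sigma$.

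Next I would take object~(2) to be the holomorphic connection $\nabla$ on $\CQ$ furnished by Proposition~\ref{lem1}, built from the self-adjoint differential operator ${\mathcal D}$ of \eqref{dod} through its associated section near the diagonal. Proposition~\ref{lem1} shows precisely that ${\mathbb S}$ is flat for the connection induced by $\nabla$ on $\operatorname{Sym}^2(\CQ)$, so $\nabla$ preserves ${\mathbb S}$ and hence the dual form ${\mathbb S}^\vee$. Finally, object~(3) is the classical oper $\tau({\mathcal D})$ of Proposition~\ref{prop2}, which is an ${\rm Sp}(n,{\mathbb C})$-oper for $n$ even and an ${\rm SO}(n,{\mathbb C})$-oper for $n$ odd.

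Since none of these three outputs involves an arbitrary choice, assembling them completes the proof. The one point that genuinely requires care --- and the step I expect to be the main obstacle --- is the mutual compatibility of the three objects: that the bilinear form on $J^{n-1}\big(K^{(1-n)/2}_\Sigma\big)$ underlying the classical oper in~(3) is the one induced, via the isomorphism $E \cong \CQ \otimes J^{n-1}\big(K^{(1-n)/2}_\Sigma\big)$ of Corollary~\ref{corji}, by the pair $(B,{\mathbb S})$, and that the connection $\widehat D$ of~(3) corresponds under the same isomorphism to the tensor product connection $D\otimes\nabla$. Both compatibilities are exactly what is verified in the course of proving Proposition~\ref{prop2}, so no argument is needed beyond invoking them.
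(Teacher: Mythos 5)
Your proposal is correct and matches the paper's own proof, which is exactly the same assembly: item~(1) is the dual of the form ${\mathbb S}$ from \eqref{e13} via Propositions~\ref{prop1} and~\ref{cor1}, item~(2) is the connection $\nabla$ of Proposition~\ref{lem1}, and item~(3) is $\tau({\mathcal D})$ from Proposition~\ref{prop2}. Your closing observation about compatibility (the bilinear form on $J^{n-1}\big(K_\Sigma^{(1-n)/2}\big)$ induced by $(B, {\mathbb S})$ through Corollary~\ref{corji}, and the connection-preserving nature of that isomorphism) is likewise handled where you say it is, inside the proof of Proposition~\ref{prop2}, so nothing further is needed.
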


\begin{proof}Consider the symmetric bilinear form $\mathbb S$ on ${\CQ}^*$ in~\eqref{e13}.
Since $\mathbb S$ is nondegenerate (Proposition~\ref{cor1}), it
produces a nondegenerate symmetric bilinear form on the dual vector bundle~${\CQ}$.

Then, from Proposition~\ref{lem1}, there
is a holomorphic connection $\nabla$ on
$\CQ$ which preserves the symmetric bilinear form on $\CQ$ given by $\mathbb S$.

Finally, the classical ${\rm Sp}(n, {\mathbb C})$-oper for $n$ even, and classical
${\rm SO}(n, {\mathbb C})$-oper for $n$ odd, are given by the differential operator
$\tau({\mathcal D})$ in the statement of Proposition~\ref{prop2}.
\end{proof}

The correspondence in Theorem~\ref{prop3} can be shown to be a one-to-one correspondence, and
therefore in the next section we shall prove a converse of Theorem~\ref{prop3}.

\section[Generalized $B$-opers and projective structures]{Generalized $\boldsymbol{B}$-opers and projective structures}\label{construction}

\subsection[A projective structure from a $B$-oper]{A projective structure from a $\boldsymbol{B}$-oper}

Given a projective structure $P$ on $\Sigma$, from \cite[p.~13, equation~(3.6)]{Bi2} we have a ($P$~dependent) decomposition of the space of differential operators
\begin{gather}\label{dodo}
H^0\big(\Sigma, \operatorname{Diff}^n_\Sigma\big(K^{(1-n)/2}_\Sigma , K^{(n+1)/2}_\Sigma\big)\big)
 = \bigoplus_{j=0}^n H^0\big(\Sigma, K^{\otimes j}_\Sigma\big).
\end{gather}
The component in $H^0(\Sigma, {\mathcal O}_\Sigma)$ corresponding to a differential operator
\[D' \in H^0\big(\Sigma, \operatorname{Diff}^n_\Sigma\big(K^{(1-n)/2}_\Sigma , K^{(n+1)/2}_\Sigma\big)\big)\]
is the symbol $\sigma(D')$ of $D'$.
Moreover, given a differential operator
\[
{\mathbf D} \in H^0\big(\Sigma, \operatorname{Diff}^n_\Sigma\big(K^{(1-n)/2}_\Sigma , K^{(n+1)/2}_\Sigma\big)\big),
\]
whose symbol is a nonzero (constant) function, there is a unique projective structure
$P_{\mathbf D}$ on~$\Sigma$ such that the component of $\mathbf D$ in $H^0\big(\Sigma,
K^2_\Sigma\big)$ vanishes identically for the decomposition of $H^0\big(\Sigma, {\rm
Diff}^n_\Sigma\big(K^{(1-n)/2}_\Sigma , K^{(n+1)/2}_\Sigma\big)\big)$ (as in~\eqref{dodo}) associated to~$P_{\mathbf D}$ (see \cite[p.~14, equation~(3.7)]{Bi2}).

\begin{Definition}\label{defP}
Consider the differential operator ${\mathcal D} \in H^0\big(\Sigma, \operatorname{Diff}^n_\Sigma\big(Q,
Q\otimes K^n_\Sigma\big)\big) $ in \eqref{dod}, and the map $\tau$ in Lemma~\ref{lem111}. We shall denote by~$\mathbf P$ the unique projective structure corresponding to the differential operator
\begin{gather*}
\tau({\mathcal D}) \in H^0\big(\Sigma, \operatorname{Diff}^n_\Sigma\big(K^{(1-n)/2}_\Sigma,
K^{(n+1)/2}_\Sigma\big)\big),
\end{gather*}
for which the component of $\tau({\mathcal D})$ in $H^0\big(\Sigma, K^2_\Sigma\big)$ vanishes
identically for the decomposition of $H^0\big(\Sigma, \operatorname{Diff}^n_\Sigma\big(K^{(1-n)/2}_\Sigma,
K^{(n+1)/2}_\Sigma\big)\big)$ as in~\eqref{dodo} associated to~$\mathbf P$.
\end{Definition}

\begin{Lemma}\label{lemB}The projective structure $\mathbf P$ induces a canonical bilinear form
\begin{gather*}
{\mathbf B}_n \colon \ \CJn\otimes\CJn \longrightarrow
{\mathcal O}_\Sigma
\end{gather*}
on $\CJn$ $($defined in~\eqref{jk}$)$ which is orthogonal when~$n$ is odd and symplectic when~$n$ is even.
\end{Lemma}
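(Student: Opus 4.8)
The plan is to realize $\mathbf P$ through its associated ${\rm SL}(2,\C)$-oper and to obtain $\mathbf B_n$ as the $(n-1)$-st symmetric power of the canonical symplectic form on the underlying rank-two bundle. Set $W := J^1\big(K^{-1/2}_\Sigma\big)$. By Lemma~\ref{kernel1} there is a short exact sequence $0 \to K^{1/2}_\Sigma \to W \to K^{-1/2}_\Sigma \to 0$, whence $\bigwedge^2 W \cong K_\Sigma\otimes K^{-1}_\Sigma = {\mathcal O}_\Sigma$ canonically; thus $W$ carries a canonical fiberwise non-degenerate \emph{symplectic} form $\omega_W\colon W\otimes W \to \bigwedge^2 W = {\mathcal O}_\Sigma$, independent of any further data. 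The projective structure $\mathbf P$ (constructed in Definition~\ref{defP}) is used only to endow $W$ with its holomorphic oper connection, equivalently to supply the filtered identification recorded in the third step below.

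First I would induce the form on the symmetric power. Since every irreducible representation of ${\rm SL}(2,\C)$ is self-dual, $\omega_W$ produces a fiberwise non-degenerate bilinear form on $\operatorname{Sym}^{n-1}(W)$; by the standard parity of the invariant form on $\operatorname{Sym}^{m}$ of the standard representation, it is symmetric when $m = n-1$ is even and skew-symmetric when $m = n-1$ is odd. This is precisely the asserted dichotomy: orthogonal for $n$ odd and symplectic for $n$ even.

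Next I would produce, from $\mathbf P$, a canonical filtered isomorphism $\CJn \cong \operatorname{Sym}^{n-1}(W)$ and transport the form along it. Computing associated graded bundles via Lemma~\ref{kernel1} gives $\operatorname{gr}\CJn \cong \bigoplus_{j=0}^{n-1} K^{(2j+1-n)/2}_\Sigma \cong \operatorname{gr}\operatorname{Sym}^{n-1}(W)$, so the two rank-$n$ bundles share the same associated graded, symmetric about ${\mathcal O}_\Sigma$. The role of the projective structure is to rigidify the extension data: the ${\rm SL}(2,\C)$-oper $(W,\nabla_W)$ determined by $\mathbf P$ has $(n-1)$-st symmetric power equal to the ${\rm Sp}(n,\C)$/${\rm SO}(n,\C)$-oper of Proposition~\ref{prop2}, whose underlying filtered bundle is $\CJn$; this yields the desired filtered isomorphism $\CJn \xrightarrow{\sim} \operatorname{Sym}^{n-1}(W)$. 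Pulling back the form of the second step then defines $\mathbf B_n$, which is fiberwise non-degenerate because the graded pairing matches the summand $K^{(2j+1-n)/2}_\Sigma$ with its dual $K^{(n-1-2j)/2}_\Sigma$.

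The main obstacle is exactly this last identification: checking that $\mathbf P$ determines a well-defined isomorphism $\CJn \cong \operatorname{Sym}^{n-1}(W)$ that respects the filtrations and is independent of the auxiliary choices entering its construction, and verifying that it intertwines the two canonical graded pairings, so that the transported form is genuinely non-degenerate and of the stated symmetry. The remaining ingredients — the canonical triviality of $\bigwedge^2 W$, the self-duality and parity of $\operatorname{Sym}^{n-1}$, and the graded computation — are routine.
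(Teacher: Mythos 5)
Your overall strategy is the same as the paper's: the paper likewise equips $\CJ_2 = J^1\big(K^{-1/2}_\Sigma\big)$ with the canonical symplectic pairing coming from $\bigwedge^2 \CJ_2 = {\mathcal O}_\Sigma$, induces from it a non-degenerate form on $\operatorname{Sym}^{n-1}(\CJ_2)$ of parity $(-1)^{n-1}$ (the paper does this by an explicit product formula on decomposable tensors, you by the self-duality and parity of ${\rm SL}(2,\C)$-representations --- these are equivalent), and then transports the form along an isomorphism $\operatorname{Sym}^{n-1}(\CJ_2) \cong \CJn$ determined by $\mathbf P$. The paper additionally checks independence of the theta characteristic via \eqref{jkl} and Corollary~\ref{cor-j}, which your ``auxiliary choices'' remark gestures at but does not carry out.

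However, your justification of the key isomorphism --- the step you yourself flag as the main obstacle --- contains a genuine error. You claim that the ${\rm SL}(2,\C)$-oper $(W,\nabla_W)$ determined by $\mathbf P$ has $(n-1)$-st symmetric power \emph{equal} to the ${\rm Sp}(n,\C)$/${\rm SO}(n,\C)$-oper of Proposition~\ref{prop2}. This is false in general: the symmetric power of the ${\rm SL}(2,\C)$-oper is the distinguished oper all of whose components in $\bigoplus_{i\geq 2} H^0\big(\Sigma, K^{\otimes 2i}_\Sigma\big)$ vanish in the decomposition~\eqref{dodo} associated to $\mathbf P$, whereas $\tau({\mathcal D})$ is normalized (Definition~\ref{defP}) only to have vanishing component in $H^0\big(\Sigma, K^{2}_\Sigma\big)$; its higher components are generically nonzero --- they are precisely the parameters $\bigoplus_{i=2}^{[n/2]} H^0\big(\Sigma, K^{\otimes 2i}_\Sigma\big)$ appearing in Theorem~\ref{thm2}, which would collapse to a point if your claimed equality held. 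The repair is to weaken the claim to what the lemma actually needs and what the paper uses: a filtered isomorphism of \emph{holomorphic bundles} (not of opers, i.e., not connection-preserving), which is supplied directly by the projective structure via \cite[Theorem~3.7]{Bi1} --- this is the map $\beta$ in \eqref{beta}, with filtration compatibility recorded in Lemma~\ref{sec}. With that substitution the rest of your argument --- the canonical triviality of $\bigwedge^2 W$, the parity of the induced form on $\operatorname{Sym}^{n-1}(W)$, and non-degeneracy via the graded pairing of $K^{(2j+1-n)/2}_\Sigma$ with $K^{(n-1-2j)/2}_\Sigma$ --- is correct and coincides with the paper's proof.
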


\begin{proof} Any two choices of theta characteristic on $\Sigma$ differ by tensoring with a holomorphic line bundle of order two on~$\Sigma$. If $K^{1/2}$ and ${\mathbb K}^{1/2} = K^{1/2}\otimes {\mathcal L}$
are two theta characteristics on~$\Sigma$, where ${\mathcal L}$ is a holomorphic line bundle
of order two, then ${\mathbb K}^{(1-k)/2}_\Sigma = K^{(1-k)/2}_\Sigma\otimes
({\mathcal L}^*)^{\otimes (k-1)}$. Therefore, from Corollary~\ref{cor-j} we conclude that
\begin{gather}\label{jkl}
J^{k-1}\big({\mathbb K}^{(1-k)/2}_\Sigma\big) =
J^{k-1}\big(K^{(1-k)/2}_\Sigma\big)\otimes({\mathcal L}^*)^{\otimes (k-1)} =
\CJk\otimes({\mathcal L}^*)^{\otimes (k-1)} ,
\end{gather}
where $\CJk$ is defined in~\eqref{jk}.

In view of \eqref{jkl}, using \cite[p.~10, Theorem~3.7]{Bi1}, the projective structure~$\mathbf P$ from Definition~\ref{defP} produces a holomorphic isomorphism
\begin{gather}\label{beta}
\beta \colon \
\operatorname{Sym}^{n-1}\left(\CJ_2\right) \longrightarrow \CJn .
\end{gather}
We note that
\begin{gather*}
\bigwedge\nolimits^2 \CJ_2 = K^{-1/2}_\Sigma\otimes K^{-1/2}_\Sigma\otimes
K_\Sigma = {\mathcal O}_\Sigma ,
\end{gather*}
and hence the fibers of the vector bundle $\CJ_2$ are equipped with a symplectic structure.
For any $x \in \Sigma$, and for any $v$, $w$ in the fiber $(\CJ_2)_x$ of $\CJ_2$ over $x$, let
\begin{gather}\label{sp0}
\langle v, w\rangle_1 \in \mathbb C
\end{gather}
be this symplectic pairing. Note that we have
\begin{gather}\label{sp}
\langle v, w\rangle_1 = - \langle w, v\rangle_1
\end{gather}
as the pairing $\langle -, -\rangle_1$ is symplectic.

We will show that the above symplectic structure $\langle -, -\rangle_1$ in~\eqref{sp0} on the fibers of $\CJ_2$ produces a bilinear form on the fibers of the symmetric product $\operatorname{Sym}^d(\CJ_2)$
for every $d \geq 1$. For this, take a point $x \in \Sigma$, and take
\begin{gather*}
v_1, \dots, v_d \in \operatorname{Sym}^d(\CJ_2)_x\qquad \text{and}\qquad
w_1, \dots, w_d \in \operatorname{Sym}^d(\CJ_2)_x .
\end{gather*}
Then we have
\begin{gather*}
\underline{v} := v_1\otimes \cdots\otimes v_d \in \operatorname{Sym}^d (\CJ_2 )_x
\qquad \text{and}\qquad \underline{w} := w_1\otimes \cdots\otimes w_d \in \operatorname{Sym}^d (\CJ_2 )_x .
\end{gather*}
Now define the pairing
\begin{gather}\label{dp1}
\langle \underline{v}, \underline{w}\rangle_d := \prod_{i=1}^d \langle v_i, w_i\rangle_1
 \in \mathbb C ,
\end{gather}
where $\langle -, -\rangle_1$ is the pairing in \eqref{sp0}. Note that the pairing $\langle -, -\rangle_d$ in~\eqref{dp1} coincides with $\langle -, -\rangle_1$
in~\eqref{sp0} when $d = 1$. It is straightforward to check that $\langle -, -\rangle_d$ in~\eqref{dp1} produces a~nodegenerate bilinear form on $\operatorname{Sym}^d (\CJ_2 )_x$.

Next note that from \eqref{sp} and \eqref{dp1} we have
\begin{gather*}
\langle \underline{w}, \underline{v}\rangle_d = \prod_{i=1}^d \langle w_i, v_i\rangle_1
 = (-1)^d \prod_{i=1}^d \langle v_i, w_i\rangle_1 = (-1)^d
\langle \underline{v}, \underline{w}\rangle_d .
\end{gather*}
Therefore, the nodegenerate bilinear form $\langle -, -\rangle_d$ on $\operatorname{Sym}^d\left(\CJ_2\right)_x$
is symmetric if $d$ is even and $\langle -, -\rangle_d$ is anti-symmetric if $d$ is odd.

In particular, $\operatorname{Sym}^{n-1}(\CJ_2)$ is equipped with the
orthogonal (respectively, symplectic) form $\langle -, -\rangle_{n-1}$ if $n$ is odd (respectively, even).

For $n$ odd (respectively, even), using the isomorphism $\beta$ in \eqref{beta},
the orthogonal (respectively, symplectic) form
$\langle -, -\rangle_{n-1}$ on $\operatorname{Sym}^{n-1}(\CJ_2)$ produces an
orthogonal (respectively, symplectic) structure on the fibers of $\CJn$.
\end{proof}

\begin{Remark}\label{reml}
The bilinear form on $\CJn$ in Lemma \ref{lemB} coincides with the bilinear form
on $\CJn$ constructed in the proof of Proposition \ref{prop2}.
\end{Remark}

The filtration (see Lemma \ref{kernel1})
\[
0 \longrightarrow K^{1/2}_\Sigma \longrightarrow \CJ_2 = J^1\big(K^{-1/2}_\Sigma\big)
 \longrightarrow J^0\big(K^{-1/2}_\Sigma\big) = K^{-1/2}_\Sigma \longrightarrow 0
\]
of $\CJ_2$ produces a filtration of $\operatorname{Sym}^{n-1}(\CJ_2)$ such that all the successive quotients are of the form $K^{j/2}_\Sigma$, for $1-n \leq j \leq n-1$.

\begin{Lemma}\label{sec} The holomorphic isomorphism $\beta$ in~\eqref{beta} takes the above
filtration of $\operatorname{Sym}^{n-1}(\CJ_2)$ to the filtration of $\CJn$ constructed in~\eqref{e11}.
\end{Lemma}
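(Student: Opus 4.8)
My plan is to identify both filtrations with the Harder--Narasimhan filtration of the underlying bundle, after which the statement becomes automatic. First I would compare the associated gradeds. The filtration of $\operatorname{Sym}^{n-1}(\CJ_2)$ induced by the sub-line-bundle $K^{1/2}_\Sigma \subset \CJ_2$ has, as recorded just before the lemma, successive quotients $K^{j/2}_\Sigma$ with $j$ running over $n-1, n-3, \dots, 1-n$, the bottom step being $\big(K^{1/2}_\Sigma\big)^{\otimes (n-1)} = K^{(n-1)/2}_\Sigma$. On the other side, Lemma~\ref{kernel1} applied to the jet filtration \eqref{e11} of $\CJn = J^{n-1}\big(K^{(1-n)/2}_\Sigma\big)$ gives ${\mathcal Q}_i/{\mathcal Q}_{i-1} \cong K^{\otimes(n-i)}_\Sigma \otimes K^{(1-n)/2}_\Sigma = K^{(n-2i+1)/2}_\Sigma$. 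Hence both filtrations are complete flags of a rank-$n$ bundle with the same successive quotients $K^{(n-2i+1)/2}_\Sigma$ appearing in the same order.

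Next I would observe that, for $g \ge 2$, such a flag is rigid. Since $\deg K^{(n-2i+1)/2}_\Sigma = (n-2i+1)(g-1)$ is strictly decreasing in $i$, each of the two bundles carries a filtration whose successive quotients are line bundles -- hence semistable -- of strictly decreasing slope. By the uniqueness of the Harder--Narasimhan filtration, each of these two filtrations is the Harder--Narasimhan filtration of the bundle on which it lives.

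Finally, since $\beta \colon \operatorname{Sym}^{n-1}(\CJ_2) \to \CJn$ is an isomorphism (see \eqref{beta} and \cite[Theorem~3.7]{Bi1}), it carries the Harder--Narasimhan filtration of the source isomorphically onto that of the target. By the previous paragraph these are exactly the two filtrations in the statement, so $\beta$ sends the filtration of $\operatorname{Sym}^{n-1}(\CJ_2)$ to the jet filtration \eqref{e11} of $\CJn$, which is the claim.

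The only delicate point is the strict monotonicity of the slopes, which is exactly where the hypothesis $g \ge 2$ enters; this is the regime in which the space of projective structures is the object of study. If one prefers to avoid stability theory, the identical conclusion follows by descending induction on the flag: the computation $\dim H^0\big(\Sigma, \CJn \otimes K^{(1-n)/2}_\Sigma\big) = 1$, in which only the graded summand ${\mathcal O}_\Sigma$ contributes, shows that $K^{(n-1)/2}_\Sigma$ sits inside $\CJn$ as a unique sub-line-bundle, forcing $\beta$ to match the two bottom steps; one then identifies $\CJn/{\mathcal Q}_1$ with $J^{n-2}\big(K^{(1-n)/2}_\Sigma\big)$, equipped with its own jet filtration, and repeats. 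In either route the work is purely the bookkeeping of the graded pieces rather than any new idea.
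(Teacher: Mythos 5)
Your proposal is correct and takes essentially the same route as the paper: the paper's entire proof of this lemma is the remark that for $\operatorname{genus}(\Sigma) \geq 2$ both filtrations coincide with the Harder--Narasimhan filtration of $\operatorname{Sym}^{n-1}(\CJ_2) = \CJn$, which is exactly your main argument, with your computation of the graded pieces $K^{(n-2i+1)/2}_\Sigma$ and their strictly decreasing slopes supplying the details the paper labels ``straightforward''. Your alternative section-counting induction is a serviceable substitute for stability theory, though note it equally requires $g \geq 2$ (it uses $H^0\big(\Sigma, K^{1-i}_\Sigma\big) = 0$ for $i \geq 2$), so it does not extend the range of the HN argument.
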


\begin{proof}This is straightforward. When $\text{genus}(\Sigma) \geq 2$, both these filtrations coincide
with the Harder--Narasimhan filtration of $\operatorname{Sym}^{n-1}(\CJ_2) = \CJn$.
\end{proof}

\subsection[Construction of $B$-opers]{Construction of $\boldsymbol{B}$-opers}

In order to build a converse statement to that of Theorem \ref{prop3}, let $W$ be a holomorphic vector bundle on $\Sigma$ equipped
with a symmetric fiberwise non-degenerate bilinear form
\begin{gather}\label{sv}
{\mathbb S}_W \colon \ \operatorname{Sym}^2(W) \longrightarrow {\mathcal O}_\Sigma .
\end{gather}
Moreover, consider a holomorphic connection $\nabla^W$ on $W$ that preserves the
bilinear form ${\mathbb S}_W$. For an integer $n \geq 2$, we shall let $\omega$ be
an ${\rm Sp}(n, {\mathbb C})$-oper (respectively, ${\rm SO}(n, {\mathbb C})$-oper)
on~$\Sigma$ if~$n$ is even (respectively, odd), which defines a holomorphic connection $\nabla^\omega$
on the holomorphic vector bundle $\CJn$ as defined in~\eqref{jk}. Then, one can show the following.

\begin{Proposition}\label{lem4} The holomorphic connection
\begin{gather*}
(\nabla^\omega\otimes {\rm Id}_W)\oplus \big({\rm Id}_{\CJn} \otimes \nabla^W\big)
\end{gather*}
on $\CJn\otimes W$, induced by $\nabla^\omega$ and $\nabla^W$, produces a generalized $B$-oper on $\Sigma$.
\end{Proposition}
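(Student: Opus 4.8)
The plan is to exhibit the generalized $B$-oper directly as the triple $(E,\mathcal F, D)$ with $E := \CJn\otimes W$, filtration $E_i := \mathcal{Q}_i\otimes W$, bilinear form $B := \mathbf B_n\otimes \mathbb S_W$, and $D$ the tensor-product connection $\nabla^\omega\otimes\operatorname{Id}_W + \operatorname{Id}_{\CJn}\otimes\nabla^W$ of the statement, and then to check every clause of Definition~\ref{parity} and Definition~\ref{def1} by transporting it to the corresponding property of the oper $\omega$ on $\CJn$. First I would record what $\omega$ supplies on $\CJn$: the jet filtration $0=\mathcal{Q}_0\subset\cdots\subset\mathcal{Q}_n=\CJn$ of~\eqref{e11} (taking $k=n-1$), the connection $\nabla^\omega$ satisfying Griffiths transversality $\nabla^\omega(\mathcal{Q}_i)\subset\mathcal{Q}_{i+1}\otimes K_\Sigma$ with each second fundamental form $\mathcal{Q}_i/\mathcal{Q}_{i-1}\to(\mathcal{Q}_{i+1}/\mathcal{Q}_i)\otimes K_\Sigma$ an isomorphism, and the horizontal form $\mathbf B_n$ which is symplectic for $n$ even and orthogonal for $n$ odd and which by Remark~\ref{rem1} obeys $\mathcal{Q}_i^\perp=\mathcal{Q}_{n-i}$.

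The first block of checks concerns the $B$-filtration of Definition~\ref{parity}. Because $\mathbb S_W$ is symmetric while $\mathbf B_n$ is symplectic (resp.\ orthogonal) for $n$ even (resp.\ odd), the product $B=\mathbf B_n\otimes\mathbb S_W$ is symplectic for $n$ even and orthogonal for $n$ odd, so the parity clause Definition~\ref{parity}(1) holds automatically. For $E_i^\perp=E_{n-i}$ I would argue fiberwise: fixing a basis of $W_x$ and using that $\mathbb S_W$ is non-degenerate forces the $B$-orthogonal complement of $\mathcal{Q}_i\otimes W$ to coincide with $\mathcal{Q}_i^\perp\otimes W$, where $\mathcal{Q}_i^\perp$ is the $\mathbf B_n$-orthogonal complement; by the self-duality $\mathcal{Q}_i^\perp=\mathcal{Q}_{n-i}$ of $\omega$ this equals $\mathcal{Q}_{n-i}\otimes W=E_{n-i}$. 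That $D$ is a $B$-connection (Definition~\ref{Bcon}) is the standard fact that a tensor product of two connections preserves the tensor product of forms each factor preserves: $\nabla^\omega$ preserves $\mathbf B_n$ by horizontality and $\nabla^W$ preserves $\mathbb S_W$ by hypothesis.

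It remains to verify the two clauses of Definition~\ref{def1}. On a local section $s\otimes w$ of $E_i=\mathcal{Q}_i\otimes W$ one has $D(s\otimes w)=\nabla^\omega(s)\otimes w+s\otimes\nabla^W(w)$; the first summand lies in $\mathcal{Q}_{i+1}\otimes W\otimes K_\Sigma=E_{i+1}\otimes K_\Sigma$ and the second in $E_i\otimes K_\Sigma$, giving the Griffiths transversality $D(E_i)\subset E_{i+1}\otimes K_\Sigma$. Reducing modulo $E_i\otimes K_\Sigma$ kills the term $s\otimes\nabla^W(w)$, so the homomorphism~\eqref{e5} is $S_i(D)=S_i(\nabla^\omega)\otimes\operatorname{Id}_W$, a tensor product of isomorphisms and hence an isomorphism. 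This shows $(E,\mathcal F,D)$ is a generalized $B$-oper of length $n$ with $\operatorname{rank}(E_i/E_{i-1})=\operatorname{rank}(W)$. The one step that genuinely uses the hypotheses on $W$, and which I expect to be the crux, is the identity $E_i^\perp=E_{n-i}$: it is the non-degeneracy of $\mathbb S_W$ that makes the orthogonal complement of a tensor subbundle factor through $\mathcal{Q}_i^\perp$, and the matching of the parity of $\mathbf B_n$ with the symmetry of $\mathbb S_W$ that makes Definition~\ref{parity}(1) come out with the correct sign.
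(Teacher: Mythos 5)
Your proof is correct and takes essentially the same route as the paper: tensor the jet filtration of $\CJn$ with $W$, endow $\CJn\otimes W$ with the product bilinear form and the product connection, and verify the axioms of Definitions~\ref{parity}, \ref{Bcon} and~\ref{def1}, with $S_i(D)=S_i(\nabla^\omega)\otimes\operatorname{Id}_W$. The only cosmetic difference is that you take the form on $\CJn$ directly from the ${\rm Sp}(n,\C)$/${\rm SO}(n,\C)$-oper structure via Remark~\ref{rem1}, whereas the paper constructs it as ${\mathbb S}_\omega$ through the induced projective structure and the isomorphism $\operatorname{Sym}^{n-1}(\CJ_2)\cong\CJn$ from Lemma~\ref{lemB} (the two agree by Remark~\ref{reml}); your fiberwise verification of $E_i^\perp=E_{n-i}$ and of the parity clause fills in precisely what the paper leaves as ``straightforward to check''.
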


\begin{proof} Consider the filtration of holomorphic subbundles on $\CJn$ given by
\[
0 = A_0 \subset A_1 \subset A_2 \subset \cdots \subset A_{n-1} \subset A_n = \CJn ,
\]
where $A_i$ is the kernel of the natural projection $\CJn \longrightarrow
J^{n-i-1}\big(K^{(1-n)/2}_\Sigma\big)$ (similar to the filtration in~\eqref{e11}). By tensoring with~$W$, the above filtration produces a filtration of holomorphic subbundles of $\CJn\otimes W$ given by
\begin{gather}\label{fo}
0 = A_0\otimes W \subset A_1\otimes W \subset A_2\otimes W \subset \cdots
\subset A_{n-1}\otimes W \subset A_n\otimes W = \CJn\otimes W.
\end{gather}
From \cite[p.~13, equation~(3.4)]{Bi2}, the oper $\omega$ produces a holomorphic differential operator
of order $n$ from $K^{(1-n)/2}_\Sigma$ to $K^{(n+1)/2}_\Sigma$, and from \cite[p.~14, equation~(3.7)]{Bi2} this differential operator produces a~projective structure on $\Sigma$.
Then, as in \eqref{beta}, from \cite[p.~10, Theorem~3.7]{Bi1} this projective structure produces a holomorphic isomorphic between $\operatorname{Sym}^{n-1}(\CJ_2)$ and $\CJn$. Moreover, as observed in the proof of Lemma~\ref{lemB}, the vector bundle $\operatorname{Sym}^{n-1}(\CJ_2)$ is equipped with an
orthogonal (respectively, symplectic) form if the integer $n$ is odd (respectively, even) and thus
 we get a non-degenerate bilinear form ${\mathbb S}_\omega$ on $\CJn$ via the above isomorphism.

The form ${\mathbb S}_\omega$ and the form ${\mathbb S}_W$ in~\eqref{sv} together define a
bilinear form on $\CJn\otimes W$, which we shall denote by
${\mathbb S}_0$. Moreover, one can see that the holomorphic connection
$(\nabla^\omega\otimes {\rm Id}_W)\oplus \big({\rm Id}_{\CJn} \otimes \nabla^W\big)$ on
$\CJn\otimes W$ preserves ${\mathbb S}_0$, because $\nabla^W$ preserves~${\mathbb S}_W$ and $\nabla^\omega$ preserves ${\mathbb S}_\omega$. Then, it is straightforward to check that the triple
\[
\big(\CJn\otimes W, {\mathbb S}_0, (\nabla^\omega\otimes {\rm Id}_W)\oplus \big({\rm Id}_{\CJn} \otimes \nabla^W\big)\big)
\]
and the filtration in \eqref{fo} together define a generalized $B$-oper on $\Sigma$, completing the proof.
\end{proof}

The constructions in Theorem~\ref{prop3} and Proposition~\ref{lem4} are evidently inverses of
each other, and can be further understood in terms of projective structures on the Riemann
surface~$\Sigma$. For this, let ${\mathfrak P}(\Sigma)$ denote the space of all projective
structures on the Riemann surface $\Sigma$. We recall that ${\mathfrak P}(\Sigma)$ is an
affine space for $H^0\big(\Sigma, K^{\otimes 2}_\Sigma\big)$ (see \cite{Gu}).

In the case of classical opers, from \cite[p.~17, Theorem~4.9]{Bi2} and \cite[p.~19, equation~(5.4)]{Bi2},
the space of ${\rm SL}(n, {\mathbb C})$-opers on $\Sigma$ is in bijection with
\[
{\mathfrak P}(\Sigma)\times \left(\bigoplus_{i=3}^n H^0\big(\Sigma, K^{\otimes i}_\Sigma\big)\right) .
\]
Note that the above description of ${\rm SL}(n, {\mathbb C})$-opers allows one to relate them
with the Hitchin base of the ${\rm SL}(n, {\mathbb C})$-Hitchin fibration
introduced in \cite{N2}.

When the integer $n$ is even (respectively, odd) the subclass of ${\rm Sp}(n, {\mathbb C})$-opers
(respectively, ${\rm SO}(n, {\mathbb C})$-opers) corresponds to the subspace
\[
{\mathfrak P}(\Sigma)\times \left(\bigoplus_{i=2}^{[n/2]} H^0\big(\Sigma, K^{\otimes 2i}_\Sigma\big)\right)
 \subset {\mathfrak P}(\Sigma)\times \left(\bigoplus_{i=3}^n H^0\big(\Sigma, K^{\otimes i}_\Sigma\big)\right) .
\]
In the case of generalized $B$-opers, combining Theorem~\ref{prop3} and Proposition~\ref{lem4},
we have the following equivalent result.

\begin{Theorem}\label{thm2}For integers $n \geq 2$, $n \not= 3$ and $r \ge 1$, the space of all generalized $B$-opers of filtration length $n$ and $\operatorname{rank}(E_i/E_{i-1})=r$ is in correspondence with
\[
{\mathcal C}_\Sigma\times {\mathfrak P}(\Sigma)\times \left(\bigoplus_{i=2}^{[n/2]}
H^0\big(\Sigma, K^{\otimes 2i}_\Sigma\big)\right) ,
\]
where ${\mathcal C}_\Sigma$ denotes the space all flat orthogonal bundles of rank $r$ on $\Sigma$, which is independent of~$i$, and ${\mathfrak P}(\Sigma)$ is the space of all projective structures on $\Sigma$.
\end{Theorem}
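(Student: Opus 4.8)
The plan is to obtain the correspondence by combining the two constructions already established, Theorem~\ref{prop3} and Proposition~\ref{lem4}, and then to interpret the resulting data in terms of the three factors on the right-hand side. First I would record the underlying bundle identification: by Corollary~\ref{corji}, any generalized $B$-oper of filtration length $n$ with $\operatorname{rank}(E_i/E_{i-1})=r$ has underlying bundle canonically isomorphic to $\CQ\otimes\CJn$, where $\operatorname{rank}(\CQ)=r$. Theorem~\ref{prop3} attaches to $(E,{\mathcal F},D)$ a triple consisting of a fiberwise nondegenerate symmetric form on $\CQ$, a form-preserving holomorphic connection on $\CQ$, and a classical $\mathrm{Sp}(n,\C)$-oper ($n$ even) or $\mathrm{SO}(n,\C)$-oper ($n$ odd). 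Conversely, Proposition~\ref{lem4} takes such a triple --- with $W=\CQ$ and $\omega=\tau(\mathcal D)$ --- and rebuilds $\CJn\otimes W=E$ together with the filtration \eqref{fo}, the form ${\mathbb S}_0$, and the induced connection. That these two constructions are mutually inverse follows from Remark~\ref{reml}, which matches the bilinear form produced in Proposition~\ref{lem4} with the one of Proposition~\ref{prop2}, together with the connection-preserving isomorphism of Corollary~\ref{corji}; this yields a bijection between generalized $B$-opers and pairs $\big((\CQ,{\mathbb S},\nabla),\,\omega\big)$.

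Next I would identify the first factor. A holomorphic connection on the Riemann surface $\Sigma$ is automatically flat, and by Theorem~\ref{prop3}(2) the connection $\nabla$ preserves the fiberwise nondegenerate symmetric form ${\mathbb S}$ of Theorem~\ref{prop3}(1); hence its monodromy preserves that form, and the pair $(\CQ,{\mathbb S},\nabla)$ is exactly a flat orthogonal bundle of rank $r$, i.e.\ a point of ${\mathcal C}_\Sigma$. Conversely, a point of ${\mathcal C}_\Sigma$ supplies precisely the data $(W,{\mathbb S}_W,\nabla^W)$ required as input to Proposition~\ref{lem4}. This also accounts for the clause that ${\mathcal C}_\Sigma$ is independent of $i$: the flat-orthogonal factor is disjoint from, and does not interact with, the tensor-power summands indexed by $i$.

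For the second factor I would invoke the classical parametrization recalled immediately before the statement. By \cite[Theorem~4.9]{Bi2} and \cite[equation~(5.4)]{Bi2}, the space of $\mathrm{SL}(n,\C)$-opers is ${\mathfrak P}(\Sigma)\times\big(\bigoplus_{i=3}^{n}H^0(\Sigma,K^{\otimes i}_\Sigma)\big)$, and the symplectic ($n$ even) or orthogonal ($n$ odd) subclass corresponds to the subspace ${\mathfrak P}(\Sigma)\times\big(\bigoplus_{i=2}^{[n/2]}H^0(\Sigma,K^{\otimes 2i}_\Sigma)\big)$, only the even-degree components surviving (the degree-two datum being absorbed into the projective structure ${\mathfrak P}(\Sigma)$). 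Combining this description of the classical-oper factor with the identification of the flat-orthogonal factor above yields the asserted correspondence.

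The main obstacle I anticipate lies in the orthogonal case, which is exactly what forces the hypothesis $n\neq 3$. The construction of the classical oper in Proposition~\ref{prop2} relies on the fact that an $\mathrm{SO}(n,\C)$-oper remains an oper after extending its structure group to $\mathrm{SL}(n,\C)$, a property available there only for odd $n\geq 5$; the degenerate case $n=3$ (where $\mathrm{SO}(3,\C)$ is of type $A_1$) must be excluded, consistently with the empty range $\bigoplus_{i=2}^{[3/2]}=\bigoplus_{i=2}^{1}$ appearing in the formula. I would therefore state and prove the theorem for $n\geq 2$ with $n\neq 3$, noting that the even orthogonal groups $\mathrm{SO}(2n,\C)$ fail the same extension property and are deferred to \cite{BSY}.
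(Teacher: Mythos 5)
Your proposal is correct and follows essentially the same route as the paper: the paper likewise obtains Theorem~\ref{thm2} by combining Theorem~\ref{prop3} and Proposition~\ref{lem4} (declaring them ``evidently inverses of each other''), identifying the pair consisting of the symmetric form $\mathbb S$ and the form-preserving connection $\nabla$ on $\CQ$ with a point of ${\mathcal C}_\Sigma$, and quoting \cite[Theorem~4.9]{Bi2} and \cite[equation~(5.4)]{Bi2} to parametrize the classical ${\rm Sp}(n,\C)$- or ${\rm SO}(n,\C)$-oper factor by ${\mathfrak P}(\Sigma)\times\big(\bigoplus_{i=2}^{[n/2]}H^0\big(\Sigma,K^{\otimes 2i}_\Sigma\big)\big)$, with $n=3$ excluded for the same degeneracy ${\rm SO}(3,\C)={\rm Sp}(2,\C)/(\Z/2\Z)$ that you identify. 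Your appeal to Remark~\ref{reml} and Corollary~\ref{corji} to substantiate the mutual-inverse claim is a reasonable (and slightly more explicit) filling-in of a step the paper leaves terse.
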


In Theorem \ref{thm2}, the case of $n = 3$ is excluded because ${\rm SO}(3,{\mathbb C})
 = {\rm Sp}(2,{\mathbb C})/({\mathbb Z}/2{\mathbb Z})$.

The summation in Theorem \ref{thm2} should be clarified~-- the summation is from $2$
to $[n/2]$ in \textit{increasing} order. So $\big(\bigoplus_{i=2}^{[n/2]} H^0\big(\Sigma, K^{\otimes
2i}_\Sigma\big)\big) = 0$ when $n = 2$.

\subsection[Generalized $B$-opers on jet bundles]{Generalized $\boldsymbol{B}$-opers on jet bundles}

Consider now a holomorphic vector bundle $W$ on $\Sigma$ of rank $n$ equipped with a fiberwise
non-degenerate symmetric pairing
\[
\nu \colon \ W\otimes W \longrightarrow K^{1-n}_\Sigma .
\]
As before, using $\nu$ we have a non-degenerate pairing $\langle-, -\rangle$ between
$W$ and $W\otimes K^n_\Sigma$ with values in $K_\Sigma$. The jet bundle $J^{n-1}(W)$ has a filtration
\begin{gather*}
0 = {\mathcal S}_0 \subset {\mathcal S}_1 \subset \cdots \subset {\mathcal S}_{n-1} \subset {\mathcal S}_{n} := J^{n-1}(W) ,
\end{gather*}
which is constructed as done in~\eqref{e11} using the natural projections $J^{n-1}(W) \longrightarrow J^{n-1-i}(W)$. More precisely, ${\mathcal S}_i$ is the kernel of the projection $J^{n-1}(W) \longrightarrow J^{n-1-i}(W)$.

Given a holomorphic differential operator \[
{\mathbb D} \in H^0\big(\Sigma, \operatorname{Diff}^n_\Sigma\big(W, W\otimes K^n_\Sigma\big)\big),
\]
its symbol $\sigma({\mathbb D})$ as in Definition~\ref{symbol} is a holomorphic section
of \[(T\Sigma)^{\otimes n}\otimes
\operatorname{Hom}\big(W, W\otimes K^n_\Sigma\big) = \operatorname{End}(W) .\]

\begin{Definition} We define the differential operator \begin{gather*}
{\mathbb D}^* \in H^0\big(\Sigma, \operatorname{Diff}^n_\Sigma\big(W, W\otimes K^n_\Sigma\big)\big)\end{gather*}
by the equation
$\langle {\mathbb D}(s), t\rangle = \langle s, {\mathbb D}^*(t)\rangle$, where $s$ and $t$
are any locally defined holomorphic sections of the vector bundle~$W$, and
$\langle, - - \rangle$ is the~$K_\Sigma$ valued pairing between~$W$ and~$W\otimes K^n_\Sigma$ mentioned earlier.
\end{Definition}

From this definition one can obtain a correspondence between differential operators $\mathbb D$
as above and generalized $B$-oper structures on the jet bundle $J^{n-1}(W)$.

\begin{Proposition}\label{lem2}The differential operator $\mathbb D$ defines a generalized $B$-oper structure on the vector bundle $J^{n-1}(W)$ equipped with the filtration constructed as in~\eqref{e11} if
\begin{itemize}\itemsep=0pt
\item $\sigma({\mathbb D}) = {\rm Id}_W$, and

\item ${\mathbb D}^* = \mathbb D$.
\end{itemize}
\end{Proposition}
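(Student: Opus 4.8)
The plan is to verify directly that the triple $\big(J^{n-1}(W),\, \{{\mathcal S}_i\},\, D\big)$ satisfies all the conditions of Definition~\ref{def1}, where $D$ is a holomorphic connection produced by the symbol condition and the bilinear form turning $\{{\mathcal S}_i\}$ into a $B$-filtration is produced by $\nu$; the self-adjointness ${\mathbb D}^*={\mathbb D}$ will be exactly what forces $D$ to be a $B$-connection. The two hypotheses thus feed the two separate pieces of structure, connection versus form, needed to assemble a generalized $B$-oper.

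First I would use the hypothesis $\sigma({\mathbb D})={\rm Id}_W$. Viewing ${\mathbb D}\in\operatorname{Hom}\big(J^n(W), W\otimes K^n_\Sigma\big)$, the symbol condition says that the restriction of ${\mathbb D}$ to the subbundle $K^n_\Sigma\otimes W\subset J^n(W)$ (from Lemma~\ref{kernel1}) is an isomorphism onto $W\otimes K^n_\Sigma$; hence $\ker({\mathbb D})$ is a subbundle mapping isomorphically to $J^{n-1}(W)$ under the natural projection $J^n(W)\to J^{n-1}(W)$. Composing the resulting splitting with the canonical homomorphism $J^n(W)\to J^1\big(J^{n-1}(W)\big)$ produces a section of the Atiyah sequence of $J^{n-1}(W)$, i.e., a holomorphic connection $D$; this is the vector-bundle analogue of the scalar construction of \cite[p.~15, equation~(4.3)]{Bi2}. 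The same symbol normalization makes $\big(J^{n-1}(W), \{{\mathcal S}_i\}, D\big)$ satisfy conditions (1) and (2) of Definition~\ref{def1}: the connection satisfies Griffiths' transversality with respect to the jet filtration, $D({\mathcal S}_i)\subset {\mathcal S}_{i+1}\otimes K_\Sigma$, and the induced second fundamental forms \eqref{e5} are isomorphisms because the symbol is invertible, exactly as in Theorem~\ref{thm1}.

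Next I would build the bilinear form. The pairing $\nu$ gives $\langle-,-\rangle\colon W\otimes\big(W\otimes K^n_\Sigma\big)\to K_\Sigma$, and the very pairing that underlies the definition of the adjoint ${\mathbb D}^*$ induces, through the jet filtration, a fiberwise non-degenerate bilinear form $B\colon J^{n-1}(W)\otimes J^{n-1}(W)\to {\mathcal O}_\Sigma$. As in the computation of Lemma~\ref{lemB}, the form is symmetric (orthogonal) when $n$ is odd and skew (symplectic) when $n$ is even, which matches the parity requirement of Definition~\ref{parity}(1); and the jet filtration is isotropic in the appropriate sense, ${\mathcal S}_i^\perp = {\mathcal S}_{n-i}$, so that $\{{\mathcal S}_i\}$ is a genuine $B$-filtration. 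To see that $D$ is a $B$-connection I would pass to the diagonal description: $D$ corresponds to a section over $2\Delta$ of the relevant bundle, and the lemma following Proposition~\ref{prop2} shows that $D$ preserves $B$ precisely when this section is symmetric under the involution $\eta$ of \eqref{eta}. The self-adjointness ${\mathbb D}^*={\mathbb D}$ is, by the reformulation relating \eqref{dei} and \eqref{eta}, exactly this $\eta$-symmetry; hence $D$ preserves $B$. Assembling these steps, the triple $\big(J^{n-1}(W), \{{\mathcal S}_i\}, D\big)$ is a generalized $B$-oper.

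The main obstacle is the construction of the form $B$ on $J^{n-1}(W)$ from $\nu$, together with the determination of its symmetry type and the orthogonality relations ${\mathcal S}_i^\perp = {\mathcal S}_{n-i}$: this requires carefully tracking the $K_\Sigma$-twists through the jet filtration so that the form is genuinely valued in ${\mathcal O}_\Sigma$, and reproducing the parity bookkeeping of Lemma~\ref{lemB} in the $W$-coefficient setting. Once the form is in hand, identifying self-adjointness with $\eta$-symmetry of the diagonal section, and hence with the $B$-connection property, is a routine transcription of the argument already used for Proposition~\ref{prop2}.
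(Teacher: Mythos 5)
Your proposal is correct in outline, but it takes a genuinely different route from the paper. The paper's proof is a short classification argument: it notes that the two hypotheses are exactly the properties enjoyed by the operator $\mathcal D$ attached to a generalized $B$-oper (the symbol statement from the proof of Proposition~\ref{prop2}, and self-adjointness from~\eqref{dei}), then invokes the parametrization of \cite[p.~17, Theorem~4.9]{Bi2} and \cite[p.~19, equation~(5.4)]{Bi2} under which the operators with symbol ${\rm Id}_W$ correspond to ${\mathfrak P}(\Sigma)\times\bigl(\bigoplus_{i=3}^{n} H^0\bigl(\Sigma, K^{\otimes i}_\Sigma\bigr)\bigr)$ as in~\eqref{dodo}, and observes that ${\mathbb D}^*={\mathbb D}$ holds exactly when the odd components in $H^0\bigl(\Sigma,K^{\otimes(2j+1)}_\Sigma\bigr)$ vanish; since that locus is precisely the one identified before Theorem~\ref{thm2} with ${\rm Sp}(n,{\mathbb C})$- and ${\rm SO}(n,{\mathbb C})$-opers, the generalized $B$-oper structure then comes for free from the already-established correspondence of Theorem~\ref{prop3} and Proposition~\ref{lem4}. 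You instead verify Definition~\ref{def1} directly: the connection from the splitting of $J^n(W)\to J^{n-1}(W)$ by $\ker({\mathbb D})$ (legitimate, since the symbol hypothesis makes ${\mathbb D}$ an isomorphism on the subbundle $K^{\otimes n}_\Sigma\otimes W$ of Lemma~\ref{kernel1}; this is indeed the vector-bundle analogue of \cite[p.~15, equation~(4.3)]{Bi2}), transversality and the isomorphy of the second fundamental forms from invertibility of the symbol, and the $B$-connection property from the dictionary between~\eqref{dei} and~\eqref{eta} together with the unnamed lemma on $\widetilde{\theta}^F$ (which, note, follows Proposition~\ref{lem1}, not Proposition~\ref{prop2}). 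Your route is constructive, reuses the diagonal machinery of Section~\ref{jets} rather than the moduli count, and works uniformly in ${\rm rank}(W)$ without routing through the essentially scalar parametrization~\eqref{dodo}; the paper's route is much shorter given its accumulated machinery and locates the self-adjoint operators inside the moduli description of Theorem~\ref{thm2}. The one step you defer as the ``main obstacle''~-- producing the ${\mathcal O}_\Sigma$-valued nondegenerate form on $J^{n-1}(W)$ with ${\mathcal S}_i^\perp={\mathcal S}_{n-i}$~-- can be discharged by the classical bilinear concomitant of the Lagrange identity (compare the paper's remark on the Lagrange adjoint and~\cite{r8}): there is a pairing $P$ on $(n-1)$-jets, valued in ${\mathcal O}_\Sigma$ precisely because $\nu$ takes values in $K^{1-n}_\Sigma$, with $\langle {\mathbb D}(s),t\rangle - \langle s,{\mathbb D}^*(t)\rangle = {\rm d}P(s,t)$; when ${\mathbb D}^*={\mathbb D}$ it satisfies $P(t,s)=(-1)^{n-1}P(s,t)$, matching the parity requirement of Definition~\ref{parity}, it involves only terms pairing $j$-th with $k$-th derivatives for $j+k\le n-1$ (forcing ${\mathcal S}_{n-i}\subseteq {\mathcal S}_i^\perp$, with equality because the $j+k=n-1$ terms are symbol-induced and nondegenerate), and it is parallel since for jets of solutions of ${\mathbb D}(u)=0$~-- the flat sections of your connection~-- one gets ${\rm d}P(u,v)=\langle{\mathbb D}(u),v\rangle-\langle u,{\mathbb D}(v)\rangle = 0$, which is in fact a slightly cleaner derivation of the $B$-connection property than the $\eta$-symmetry transcription you propose.
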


\begin{proof}
In \eqref{dei} we saw that the holomorphic differential operator associated to a generalized
$B$-oper satisfies the above condition ${\mathbb D}^* = \mathbb D$. Also, it
was noted in the proof of Proposition \ref{prop2} that
symbol of the operator is $\text{Id}$ (see \cite[p.~18]{Bi3}).

Consider the subset of $H^0\big(\Sigma, \operatorname{Diff}^n_\Sigma\big(W, W\otimes K^n_\Sigma\big)\big)$ defined by all differential operators whose symbol is $\text{Id}_W$. From \cite[p.~17, Theorem~4.9]{Bi2} and
\cite[p.~19, equation~(5.4)]{Bi2}, this space is
in bijection with
\[
{\mathfrak P}(\Sigma)\times \left(\bigoplus_{i=3}^{n} H^0\big(\Sigma, K^{\otimes i}_\Sigma\big)\right).\] Furthermore, a~holomorphic
differential $D$ lying in this subset satisfies the equation
$D^* = D$ if the component of $D$ in $H^0\big(\Sigma, K^{\otimes (2j+1)}_\Sigma\big)$ vanishes for all
$1 \leq j \leq \floor{(n-1)/2}$, from which the proposition follows.
\end{proof}

\section[Concluding remarks: generalized $B$-opers and Higgs bundles]{Concluding remarks: generalized $\boldsymbol{B}$-opers and Higgs bundles} \label{sec:higgs}

For those having worked with Higgs bundles, introduced in~\cite{N1} as solutions of the so-called Hitchin equations on $\Sigma$, there is proximity between some aspects of $B$-opers and Higgs bundles on $\Sigma$, which are pairs $(E,\Phi)$ where
\begin{itemize}\itemsep=0pt
\item $E$ is a holomorphic vector bundle on $\Sigma$,
\item the Higgs field $\Phi\colon E\rightarrow E \otimes K$, is a holomorphic $K$-valued endomorphism.
\end{itemize}
 To illustrate this, we shall focus on $B$-opers of even rank where $B$ is anti-symmetric, which shall naturally lead to symplectic Higgs bundles. For further details on Higgs bundles, the reader may refer to standard references such as Hitchin~\cite{N1,N2} and Simpson \cite{simpson92,S2,simpson}.
Finally, we should mention that generalized $B$-opers carry particularly interesting properties when the length of the filtration is~2, a case which leads to an intermediate generalization of opers with short filtrations, whose study is carried on in more detail in the third author's PhD thesis~\cite{mengxue}. In what follows, we shall describe some aspects initiating the program. For this, some properties of determinant bundles shall be of much use, and thus we will give a brief description of them first.

We want to find a formula for the determinant bundle for each subbundle in the filtration of a generalized $B$-oper. The proof to the next proposition follows the idea given by Wentworth in \cite[Lemma~4.9]{MR3675465}.

\begin{Proposition}\label{prop:detbundle}
 Suppose $(E,\set{E_i},D)$ is a generalized $B$-oper of filtration length $n$ and each associated graded piece has rank $r = \operatorname{rank}((E_i/E_{i-1})$. Then there are smooth isomorphisms
 \begin{gather}\label{eq:detbundle}
 \det(E_i) \simeq \det(Q)^i \otimes K_\Sigma^{r(ni-i(i+1)/2)} ,
 \end{gather}
 where $Q = E/E_{n-1}$. In particular, when the $B$-oper is a complete flag, we have
\begin{gather*}
\det(E_i) \simeq Q^i \otimes K_\Sigma^{ni - i(i+1)/2} .
\end{gather*}
\end{Proposition}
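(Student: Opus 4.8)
The plan is to use the isomorphism from Lemma~\ref{lem3} together with the behaviour of determinants under the short exact sequences defining the jet bundle filtration. By Corollary~\ref{corji}, the bundle $E$ is identified with $\CQ \otimes J^{n-1}\big(K^{(1-n)/2}_\Sigma\big)$, and under this identification the filtration subbundle $E_i$ corresponds (via Theorem~\ref{thm1}) to $\CQ \otimes A_i$, where $A_i$ is the $i$-th step of the jet-bundle filtration $\mathcal Q_i$ from~\eqref{e11}. Since $\CQ = Q \otimes K_\Sigma^{(n-1)/2}$ has rank $r$, and $A_i$ has rank $i$, the subbundle $E_i$ has rank $ri$, and computing $\det(E_i)$ reduces to computing $\det(A_i)$ and combining it with the appropriate power of $\det(\CQ)$.

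First I would apply the elementary identity for the determinant of a tensor product: for a rank-$r$ bundle $\CQ$ and a rank-$i$ bundle $A_i$ one has
\[
\det(\CQ \otimes A_i) \simeq \det(\CQ)^{\otimes i} \otimes \det(A_i)^{\otimes r}.
\]
Next I would compute $\det(A_i)$ using the short exact sequences from Lemma~\ref{kernel1}: the successive quotients of the filtration of $J^{n-1}\big(K^{(1-n)/2}_\Sigma\big)$ are the line bundles $K^{\otimes j}_\Sigma \otimes K_\Sigma^{(1-n)/2} = K_\Sigma^{j + (1-n)/2}$ for the relevant range of $j$. Summing the exponents of $K_\Sigma$ appearing in the $i$ lowest graded pieces gives a telescoping arithmetic sum, producing $\det(A_i) \simeq K_\Sigma^{\otimes e_i}$ for an exponent $e_i$ that works out, after collecting the $(1-n)/2$ shifts and the $\sum j$ term, to $ni - i(i+1)/2$. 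Finally I would substitute $\det(\CQ) = \det(Q) \otimes K_\Sigma^{r(n-1)/2}$ (since $\CQ = Q \otimes K_\Sigma^{(n-1)/2}$ and $Q$ has rank $r$), and combine all the $K_\Sigma$-powers. The complete-flag case $r = 1$ then follows by setting $r = 1$ and noting $\det(Q) = Q$.

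The main obstacle I expect is bookkeeping the exponent of $K_\Sigma$ correctly, since there are three distinct sources of canonical-bundle powers that must be added: the intrinsic $K_\Sigma^{(n-1)/2}$ twist built into $\CQ$ (contributing through $\det(\CQ)^{\otimes i}$), the $K_\Sigma^{(1-n)/2}$ base twist inside each jet-bundle graded piece, and the genuine jet-order powers $K_\Sigma^{\otimes j}$ from Lemma~\ref{kernel1}. The half-integer shifts $(n-1)/2$ and $(1-n)/2$ must cancel cleanly so that the final exponent $r\big(ni - i(i+1)/2\big)$ is an integer, which is the arithmetic consistency check that confirms the computation; for this reason I would work with smooth rather than holomorphic isomorphisms throughout, as the statement only claims smooth isomorphisms and the theta-characteristic ambiguity is thereby harmless.
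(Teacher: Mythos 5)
Your proposal is correct in substance, but it takes a genuinely different route from the paper's proof. The paper works directly with the oper data: composing the second fundamental forms gives isomorphisms $E_j/E_{j-1} \simeq Q \otimes K_\Sigma^{n-j}$ for $1 \le j \le n$, and the smooth decomposition $E_i \simeq \bigoplus_{j=1}^i \big(Q\otimes K_\Sigma^{n-j}\big)$ then yields \eqref{eq:detbundle} by a one-line determinant computation~--- no jets, no connection $\nabla$, no theta characteristic. Your route through Section~\ref{jets} also works, with two caveats. First, Theorem~\ref{thm1} identifies $E_i$ with ${\mathcal Q}_i \subset J^{n-1}(Q)$, but your further claim that $\Gamma^{n-1}_\nabla$ carries $\CQ \otimes A_i$ onto ${\mathcal Q}_i$ is asserted rather than argued; it is true (the construction of $\Gamma^k_\nabla$ commutes with the truncation projections, so $\CQ\otimes A_i$ lands inside ${\mathcal Q}_i$, and equality follows from injectivity plus a rank count), but it is exactly the step that makes the filtration bookkeeping legitimate, so it needs saying. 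Second, your intermediate exponent is off: with the shifts included one gets $\det(A_i) \simeq K_\Sigma^{i(n-i)/2}$, whereas your stated $ni - i(i+1)/2$ is the bare sum $\sum_{j=n-i}^{n-1} j$ \emph{before} the $(1-n)/2$ shifts; the slip is self-correcting, since $\det(\CQ)^{\otimes i} = \det(Q)^{\otimes i}\otimes K_\Sigma^{ri(n-1)/2}$ supplies precisely the missing power and the total comes out to $r\big(ni - i(i+1)/2\big)$, as you anticipated in your bookkeeping caveat. Finally, note that your approach simplifies considerably if you drop $\CQ$ altogether and use $f_{n-1}\colon E \stackrel{\sim}{\longrightarrow} J^{n-1}(Q)$ directly: by Lemma~\ref{kernel1} the graded pieces of the filtration \eqref{e11} are ${\mathcal Q}_j/{\mathcal Q}_{j-1} \simeq Q\otimes K_\Sigma^{n-j}$, so $\det(E_i) \simeq \det({\mathcal Q}_i) \simeq \det(Q)^{\otimes i}\otimes K_\Sigma^{r\sum_{j=1}^i (n-j)}$ with no half-integer twists appearing at any stage; this variant needs no theta characteristic, and since the determinant of a filtered bundle is canonically (holomorphically, not just smoothly) the tensor product of the determinants of its graded pieces, it even upgrades the conclusion from a smooth to a holomorphic isomorphism~--- something the paper's smooth-splitting argument does not claim.
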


\begin{proof}The composition of second fundamental forms $S_j(D) \circ \cdots \circ S_{n-1}(D)$ gives an isomorphism for each associated graded piece:
\begin{gather*}
E_j/E_{j-1} \simeq E_n/E_{n-1} \otimes K_\Sigma^{n-j} = Q \otimes K_\Sigma^{n-j},
\qquad \forall\, j = 1, 2, \dots, n.
\end{gather*}
Combining these isomorphisms with the fact that there is a smooth decomposition of~$E$ into associated graded pieces one gets the following isomorphism:
\begin{gather*}
E_i \simeq \big(Q \otimes K_\Sigma^{n-1}\big) \oplus \cdots \oplus \big(Q \otimes K_\Sigma^{n-i}\big).
\end{gather*}
Since $E_i$ has rank $ri$, the determinant bundle of~$E_i$ is given by
\begin{align*}
\bigwedge^{ri} E_i \simeq \bigotimes_{j=1}^i \bigwedge^r \big(Q \otimes K_\Sigma^{n-j}\big)
 \simeq \det(Q)^i \otimes K_\Sigma^{r(ni-i(i+1)/2)} ,
\end{align*}
where we have used the exterior product isomorphisms $\wedge^k (V \oplus W) \simeq \wedge^k V \otimes \wedge^k W$ and \linebreak$\wedge^k (V \otimes W_1) \simeq \wedge^k V \otimes W_1^{\otimes k}$, for $W_1$ a one dimensional vector space.
\end{proof}

 \subsection[Generalized $B$-opers and filtrations of length $n=2$]{Generalized $\boldsymbol{B}$-opers and filtrations of length $\boldsymbol{n=2}$}\label{Higgs}

From their construction, given an anti-symmetric form $B$ and a rank $2r$ holomorphic bundle~$E$, from
Definition \ref{def1} a generalized $B$-oper $(E,{\mathcal F}, D)$ naturally defines a Lagrangian subspace
$E_1|_x\subset E|_x$ for each $x\in \Sigma$ and thus a Lagrangian subbundle $E_1\subset E$.

\begin{Proposition}Let $(E,E_1,D)$ be a generalized $B$-oper of rank $2r$ and filtration length $2$. Then there is a~rank $2r$ Higgs bundle $(E,\Phi)$, where the Higgs field is induced by
\[S_1(D)\colon \ E_1 \longrightarrow E/E_1 \otimes K_\Sigma.\]
In particular, when $r=1$, the Higgs bundle is $\big(K^{1/2} \oplus K^{-1/2},\Phi\big)$.
\end{Proposition}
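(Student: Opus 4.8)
The plan is to extract a Higgs field on the associated graded bundle $\mathrm{Gr}(E) = E_1 \oplus (E/E_1)$ from the single second fundamental form $S_1(D)$, to check it satisfies the Higgs condition, and then to specialize to rank one. First I would record the structural consequences of $n = 2$. Since the filtration length is even, Definition~\ref{parity}(1) forces $B$ to be symplectic, and Definition~\ref{parity}(2) gives $E_1^\perp = E_{n-1} = E_1$, so $E_1$ is a Lagrangian subbundle of the rank-$2r$ bundle $E$. By Lemma~\ref{lem11} (with $n=2$, $i=1$) the form $B$ induces a canonical isomorphism $E_1^* \xrightarrow{\sim} E/E_1$, equivalently $E/E_1 \cong E_1^*$. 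Under this identification the isomorphism $S_1(D)\colon E_1 \xrightarrow{\sim} (E/E_1)\otimes K_\Sigma$ of Definition~\ref{def1}(2) becomes a $K_\Sigma$-valued bilinear form on $E_1$; moreover this is exactly the form $\mathbb{S}'$ of \eqref{e6} for $n=2$, so Proposition~\ref{prop1} guarantees it is \emph{symmetric} and fiberwise non-degenerate.

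Next I would define the Higgs field. On $\mathrm{Gr}(E) = E_1 \oplus (E/E_1)$, with $E/E_1 \cong E_1^*$, let $\Phi$ be the endomorphism-valued one-form $\Phi = \begin{pmatrix} 0 & 0 \\ S_1(D) & 0 \end{pmatrix} \in H^0\big(\Sigma, \operatorname{End}(\mathrm{Gr}(E))\otimes K_\Sigma\big)$, whose only nonzero block is $S_1(D)\colon E_1 \to (E/E_1)\otimes K_\Sigma$ and which annihilates $E/E_1$. It is holomorphic because $S_1(D)$ is, and since $\Sigma$ is a curve there is no integrability condition to check (the obstruction $\Phi\wedge\Phi$ is a $(2,0)$-form and hence vanishes), so $(\mathrm{Gr}(E), \Phi)$ is a Higgs bundle; identifying $\mathrm{Gr}(E)$ smoothly with $E$ produces the claimed pair $(E,\Phi)$. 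The symmetry of $S_1(D)$ coming from Proposition~\ref{prop1} is precisely what places $\Phi$ in the symplectic Lie algebra of $(E_1\oplus E_1^*, B)$ --- the lower-left block of an element of $\mathfrak{sp}$ is symmetric --- so this is in fact a symplectic $\mathrm{Sp}(2r,\mathbb{C})$ Higgs bundle, as anticipated by the anti-symmetry of $B$.

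Finally I would specialize to $r = 1$. Then $E_1$ and $E/E_1$ are line bundles, and the isomorphism $S_1(D)\colon E_1 \xrightarrow{\sim} (E/E_1)\otimes K_\Sigma \cong E_1^*\otimes K_\Sigma$ forces $E_1^{\otimes 2}\cong K_\Sigma$. Thus $E_1$ is a theta characteristic, which we denote $K^{1/2}$, and correspondingly $E/E_1 \cong K^{-1/2}$. Hence $\mathrm{Gr}(E) \cong K^{1/2}\oplus K^{-1/2}$, and $\Phi$ is the off-diagonal map $K^{1/2} \to K^{-1/2}\otimes K_\Sigma = K^{1/2}$ given by $S_1(D)$, i.e.\ the standard nilpotent field of the uniformizing Higgs bundle.

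The step I expect to be the main obstacle is not the formula for $\Phi$ but the passage to the associated graded. The underlying bundle of the oper is the generally non-split extension $0 \to E_1 \to E \to E/E_1 \to 0$, and there is no canonical holomorphic projection $E \to E_1$, so the Higgs field lives naturally on $\mathrm{Gr}(E)$ and is identified with one on $E$ only via a smooth splitting; I would take care to state this identification precisely. One must also verify that the symplectic form $B$ descends to the standard pairing between $E_1$ and $E_1^* \cong E/E_1$ on $\mathrm{Gr}(E)$, so that the assertion that $\Phi$ is $\mathfrak{sp}$-valued is meaningful.
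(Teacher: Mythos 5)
Your proposal is correct and takes essentially the same route as the paper's proof: identify $E/E_1 \simeq E_1^*$ via the $B$-filtration, place $S_1(D)$ as the lower-left block of a nilpotent Higgs field on $E_1 \oplus (E/E_1)$, and for $r=1$ deduce from the isomorphism $S_1(D)$ that $E_1^{\otimes 2} \simeq K_\Sigma$, so $E_1$ is a theta characteristic. Your supplementary remarks --- the symmetry of $S_1(D)$ from Proposition~\ref{prop1} placing $\Phi$ in $\mathfrak{sp}$, and the caveat that the identification of $E$ with $E_1 \oplus (E/E_1)$ is only smooth, not holomorphic --- make explicit points the paper leaves implicit, but do not alter the argument.
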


\begin{proof}
Note that $S_1(D)$ induces an isomorphism $E_1 \to (E/E_1)\otimes K_\Sigma
 = E_1^* \otimes K_\Sigma$, because
the definition of a $B$-filtration gives $E/E_1 \simeq E^*_1$.
Then, $E_1 \oplus (E/E_1) \simeq E_1 \oplus E_1^*$ has the induced Higgs field
\begin{align*}
\Phi = \begin{bmatrix}
0 & 0 \\
S_1(D) & 0
\end{bmatrix} \colon \ E_1 \oplus (E/E_1) \longrightarrow (E_1 \oplus (E/E_1)) \otimes K_\Sigma.
\end{align*}
When $r = 1$ we have $E_1$ is just a line bundle over $\Sigma$. Then the isomorphism given by
$S_1(D)$ implies $E_1^2 \simeq K_\Sigma$. Hence $E_1 \simeq K_\Sigma^{1/2}$
is a choice of theta characteristic, and $E \simeq K_\Sigma^{1/2} \oplus K_\Sigma^{-1/2}$.
\end{proof}

\subsection{Other induced Higgs bundles}

We shall finally consider the appearance of Higgs bundles through generalized $B$-opers with flags of length bigger than 2. In this case, one has the following Higgs bundles constructed in a natural way from generalized $B$-opers.

\begin{Proposition}
On a compact connected Riemann surface of genus $g \ge 2$, a generalized $B$-oper $(E,\set{E_i},D)$ of filtration length $n$ and associated graded rank $r = \operatorname{rank}((E_i/E_{i-1})$ induces a~naturally defined stable Higgs bundle $(E,\Phi)$ given by
\begin{gather*}
E = \bigoplus_{i=1}^{n} E_i/E_{i-1} \qquad \text{and} \qquad \Phi = \begin{bmatrix}
0 & 0 & 0 & 0 & 0\\
S_1(D) & 0 & 0 & 0 & 0\\
0 & S_2(D) & 0 & 0 & 0\\
0 & 0 & \ddots & 0 & 0\\
0 & 0 & 0 & S_{n-1}(D) & 0
\end{bmatrix}\colon \ E \longrightarrow E \otimes K_\Sigma,
\end{gather*}
where $S_i(D)\colon E_i/E_{i-1} \longrightarrow E_{i+1}/E_i \otimes K_\Sigma$ are the second
fundamental forms with respect to~$D$.
\end{Proposition}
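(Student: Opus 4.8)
The plan is to first exhibit $(E,\Phi)$ as a Higgs bundle and then to establish its stability, which is the substantive part. For the construction, set $V_i := E_i/E_{i-1}$, so that $E=\bigoplus_{i=1}^n V_i$, and let $\Phi$ be the endomorphism-valued one-form whose only nonzero blocks are the second fundamental forms $S_i(D)\colon V_i\to V_{i+1}\otimes K_\Sigma$. Each $S_i(D)$ is a holomorphic bundle map, so $\Phi\in H^0(\Sigma,\operatorname{End}(E)\otimes K_\Sigma)$; since $\Sigma$ is a curve there is no integrability condition to verify, and hence $(E,\Phi)$ is automatically a Higgs bundle. I would also record at the outset that $\mu(E)=0$: the bilinear form $B$ identifies $E$ with $E^*$, so $\deg E=\deg E^*=-\deg E$. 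Thus stability reduces to showing $\deg F<0$ for every nonzero proper $\Phi$-invariant subbundle $F\subsetneq E$.

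The key structural input is that, by composing the isomorphisms $S_i(D)$ exactly as in the proof of Proposition~\ref{prop:detbundle}, each graded piece is $V_k\cong \CQ\otimes K_\Sigma^{(n+1-2k)/2}$, where $\CQ=(E/E_{n-1})\otimes K_\Sigma^{(n-1)/2}$ is the flat orthogonal bundle of Proposition~\ref{cor1} and Proposition~\ref{lem1}. Being flat, $\CQ$ is polystable of degree $0$, so every $V_k$ is semistable of slope $\mu(V_k)=\tfrac{n+1-2k}{2}(2g-2)$; moreover $S_k(D)$ is an isomorphism $V_k\xrightarrow{\sim}V_{k+1}\otimes K_\Sigma$ for $k<n$ and annihilates $V_n$. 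This chain of isomorphisms between consecutive graded pieces is what forces invariant subbundles to be small.

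To prove stability I would intersect a $\Phi$-invariant $F$ with the $\Phi$-invariant filtration $W_k:=\bigoplus_{i\ge k}V_i$ and pass to the associated graded, obtaining for each $k$ a subbundle $F^{(k)}\subseteq V_k$ (the saturation of the image of $(F\cap W_k)/(F\cap W_{k+1})$ in $V_k$), of rank $r_k$. Invariance of $F$ together with the fact that $S_k(D)$ is an isomorphism yields $S_k(D)\big(F^{(k)}\big)\subseteq F^{(k+1)}\otimes K_\Sigma$, whence the ranks are non-decreasing, $r_1\le r_2\le\cdots\le r_n$. Using $\deg F\le\sum_k\deg F^{(k)}$ and the semistability of each $V_k$ I obtain
\[
\deg F\ \le\ \sum_{k=1}^n \deg F^{(k)}\ \le\ \sum_{k=1}^n r_k\,\mu(V_k)\ =\ \frac{2g-2}{2}\sum_{k=1}^n r_k\,(n+1-2k).
\]
Since the weights $n+1-2k$ are strictly decreasing in $k$ with zero sum and the $r_k$ are non-decreasing, a Chebyshev (rearrangement) inequality gives $\sum_k r_k(n+1-2k)\le 0$; as $g\ge 2$ makes $2g-2>0$, this yields $\deg F\le 0$, i.e.\ semistability.

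The main obstacle is upgrading this to strict stability. Equality above forces all $r_k$ to equal some $s$ with $0<s\le r$ and each $F^{(k)}$ to be a maximal-slope subbundle of $V_k$. When the flat orthogonal bundle $\CQ$ is stable (i.e.\ the corresponding orthogonal representation is irreducible), each $V_k\cong\CQ\otimes K_\Sigma^{(n+1-2k)/2}$ is stable and therefore admits no proper subbundle of slope $\mu(V_k)$; this forces $s=r$ and $F^{(k)}=V_k$ for all $k$, hence $F=E$, contradicting properness and giving $\deg F<0$. Thus the delicate point is precisely this strict inequality, which is where the irreducibility of $\CQ$ enters; for decomposable $\CQ$ the same computation shows only that $(E,\Phi)$ is polystable, reflecting the identification $(E,\Phi)\cong(\operatorname{gr}\CJn,\Phi_0)\otimes(\CQ,0)$ with the uniformizing graded Higgs bundle. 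Care is also needed in the saturation bookkeeping underlying the inequality $\deg F\le\sum_k\deg F^{(k)}$.
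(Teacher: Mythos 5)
Your construction step matches the paper's (which treats it as immediate, there being no integrability condition on a curve), but your stability analysis takes a genuinely different route from the paper's proof, and the comparison is instructive. The paper argues in two lines: it claims that $Q = E/E_{n-1}$ is the \emph{only} $\Phi$-invariant subbundle, and then computes from the determinant formula \eqref{eq:detbundle} of Proposition~\ref{prop:detbundle} that $\mu(E)-\mu(Q) = (n-1)(g-1) > 0$, with no appeal to $\deg E = 0$ or to semistability of the graded pieces. That uniqueness claim is false for $r>1$: since $\Phi$ annihilates the top piece $V_n = E/E_{n-1}$, every subbundle of $V_n$ is $\Phi$-invariant, and, as you observe, so is $\bigoplus_{k=1}^n \mathcal{F}\otimes K_\Sigma^{(n+1-2k)/2}$ for every holomorphic subbundle $\mathcal{F}\subseteq \CQ$, a $\Phi$-invariant subbundle of slope exactly $\mu(\mathcal{F})$ (the $K_\Sigma$-weights sum to zero) against $\mu(E) = 0$. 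Your closing remark is therefore correct and important: for decomposable $\CQ$ --- e.g.\ $\CQ = {\mathcal O}_\Sigma^{\oplus r}$, which makes $(E,\Phi)$ a direct sum of $r$ copies of the uniformizing Higgs bundle --- the pair is strictly polystable, so the proposition as stated holds unconditionally only for $r=1$, and the paper's one-subbundle check cannot detect this.

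However, your own argument has a genuine gap at the step ``being flat, $\CQ$ is polystable of degree $0$, so every $V_k$ is semistable.'' On a compact Riemann surface a holomorphic (hence flat) connection forces degree zero but \emph{not} semistability: by the Atiyah--Weil criterion the only constraint is that each indecomposable summand have degree zero, and the oper bundle $J^1\big(K^{-1/2}_\Sigma\big)$ itself is flat yet destabilized by $K^{1/2}_\Sigma$. The orthogonal structure does not rescue this: the adjoint bundle of the uniformizing ${\rm SL}(2,{\mathbb C})$-oper carries a flat connection preserving the Killing form and contains the line subbundle $K_\Sigma$ of positive degree, so unstable flat orthogonal bundles genuinely occur in the space ${\mathcal C}_\Sigma$ of Theorem~\ref{thm2} (equivalently, feeding such a $W$ into Proposition~\ref{lem4} yields a legitimate generalized $B$-oper). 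For such a $\CQ$, a subbundle $\mathcal{F}\subset\CQ$ of positive degree makes $\bigoplus_k \mathcal{F}\otimes K_\Sigma^{(n+1-2k)/2}$ a $\Phi$-invariant subbundle of positive slope, so $(E,\Phi)$ is not even semistable, and your chain of inequalities fails exactly at $\deg F^{(k)} \le r_k\,\mu(V_k)$, which presupposed semistability of $V_k$. What your saturation-plus-rearrangement scheme honestly delivers --- and it is the correct repair of the proposition --- is conditional: $(E,\Phi)$ is semistable when $\CQ$ is semistable and stable when $\CQ$ is stable (automatic for $r=1$, where $\CQ$ is a degree-zero line bundle); the semistability of $\CQ$ must be hypothesized, not derived from flatness.
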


\begin{proof}We need to check that $(E,\Phi)$ is a stable Higgs bundle. From the decomposition of $E$, we see that $E/E_{n-1}$ is the only $\Phi$-invariant subbundle, because $\Phi(E/E_{n-1}) = 0 \subset E/E_{n-1} \otimes K$. So according to the slope stability definition, we know $(E,\Phi)$ is stable if $\mu(E) - \mu(E/E_{n-1}) > 0$. Write $Q = E/E_{n-1}$ and apply equation~\eqref{eq:detbundle} to compute the degree of $E = E_n$:
 \begin{align*}
 \mu(E) - \mu(E/E_{n-1}) &= \frac{\deg(E_n)}{\operatorname{rank}(E_n)} - \frac{\deg(Q)}{\operatorname{rank}(Q)}
 = \frac{n\deg(Q) + rn(n-1)(g-1)}{rn} - \frac{\deg(Q)}{r} \\
 &= (n-1)(g-1).
 \end{align*}
 Thus for $n > 1$ and $g > 1$, the Higgs bundle $(E,\Phi)$ is always stable.
\end{proof}

\begin{Remark}
One should note that all of the Higgs bundles induced by $B$-opers we constructed in
this section lie in the nilpotent cone in the Hitchin fibration, because the characteristic
polynomial of $\Phi$ is $\det(xI - \Phi) = x^n$.
\end{Remark}

\subsubsection{The rank 4 case}
We shall conclude here with some further comments on the lowest possible rank of generalized $B$-opers which have a symmetric bilinear form~$B$. For this, consider now a $B$-\textit{filtration} of a holomorphic
vector bundle~$E$ of rank~4 on $\Sigma$, which is given by an increasing filtration of holomorphic subbundles
\begin{gather}\label{eq44}
0 = E_0 \subsetneq
E_1 \subsetneq E_2 \subsetneq E_{3} \subsetneq E_{4} = E
\end{gather}
for which $E^\perp_i = E_{4-i}$ for all $1 \leq i \leq 3$. Then, from Definition~\ref{def1}, we have that a generalized $B$-oper is a triple $(E,{\mathcal F}, D)$, where $\mathcal F$ is a $B$-filtration as in \eqref{eq44} and $D$ is a $B$-connection on~$E$, such that $D(E_i) \subset E_{i+1}\otimes K_\Sigma$ for all $1 \leq i \leq 3$, and
the homomorphisms
\begin{gather*}
S_1(D)\colon \ E_1 \longrightarrow (E_{2}/E_1)\otimes K_\Sigma, \\
S_2(D) \colon \ E_2/E_{1} \longrightarrow (E_{3}/E_2)\otimes K_\Sigma,\\
S_3(D) \colon \ E_3/E_{2} \longrightarrow (E/E_3)\otimes K_\Sigma
\end{gather*}
are isomorphisms. Through the bilinear form $B$ there is an isomorphism $
E^*_1 \stackrel{\sim}{\longrightarrow} E /E_{3}
$, and in this case one has that $E_{2} = E^\perp_2$ and thus we obtain again a Lagrangian sub-fibration. Hence, a rank $4$ generalized $B$-oper $(E,E_1,D)$ induces the following naturally defined Higgs bundle $(P,\Phi)$, where $P=E_3$ and $\Phi\colon E_3\rightarrow E_3\otimes K$ is induced by
\begin{gather*}
S_1 \oplus S_2 \oplus S_3 \colon \ E_1 \oplus E_2/E_1 \oplus E_3/E_2
 \longrightarrow (E_2/E_1 \otimes K) \oplus (E_3/E_2 \otimes K) \oplus (E/E_3 \otimes K) ,
\end{gather*}
since $E_1 \oplus E_2/E_1 \oplus E_3/E_2 \simeq
E_3$ and $(E_2/E_1 \otimes K) \oplus (E_3/E_2 \otimes K) \oplus (E/E_3 \otimes K) \simeq E_3 \otimes K$.

\subsection*{Acknowledgments}

The authors are grateful for the hospitality and support of the Simons Center for Geometry and Physics' program {\it Geometry $\&$ Physics of Hitchin Systems} co-organized by L.~Anderson and L.P.~Schaposnik, where this project started~-- in particular, they would like to thank Andy Sanders for his seminar at the program~\cite{andy}, which inspired some of the ideas in this paper. Finally, we would like to thank Brian Collier, Aaron Fenyes, Nigel Hitchin, Steve Rayan and Sebastian Schulz for comments on a draft of the manuscript. The authors are also thankful for all the very useful comments from the two anonymous referees, whose suggestions greatly improved the manuscript. LS is grateful for Motohico Mulase's support and encouragement over the years. She is partially supported by the NSF grant DMS-1509693, the NSF CAREER Award DMS-1749013, and by the Alexander von Humboldt Foundation. This material is also based upon work supported by the National Science Foundation under Grant No. DMS-1440140 while the author was in residence at the Mathematical Sciences Research Institute in Berkeley, California, during the Fall 2019 semester. IB is supported by a J.C.~Bose Fellowship.

\pdfbookmark[1]{References}{ref}
\LastPageEnding

\end{document}